\newtheorem{thm}{Theorem}[section]
\newtheorem{cor}[thm]{Corollary}
\newtheorem{note1}[thm]{Note}
\newtheorem{defn}[thm]{Definition}
\newtheorem{exmpl}[thm]{Example}
\newtheorem{lem}[thm]{Lemma}
\newtheorem{prop}[thm]{Proposition}
\newtheorem{rem}[thm]{Remark}
\newcommand\Dref[1]{{Definition~\ref{#1}}}
\newcommand\Rref[1]{{Remark~\ref{#1}}}
\newcommand\Tref[1]{{Theorem~\ref{#1}}}
\newcommand\Cref[1]{{Corollary~\ref{#1}}}
\newcommand\Pref[1]{{Proposition~\ref{#1}}}
\newcommand\Eref[1]{{Example~\ref{#1}}}
\newcommand\Ssref[1]{{Subsection~\ref{#1}}}
\newcommand\M[1][d]{{\operatorname{M}_{#1}}}
\def\({\left(}
\def\){\right)}
\newcommand{\smat}[4]{{\(\!\!\begin{array}{cc}{#1}\!&\!{#2}
    \\[-0.1cm]{#3}\!&\!{#4}\end{array}\!\!\)}}
\newcommand{\Mat}[9]{{\( \begin{array}{ccc} {#1} & {#2} & {#3} \\ {#4} & {#5} & {#6} \\ {#7} & {#8} & {#9} \end{array} \)}}
\def\indecomposible{indecomposable}
\def\id{\operatorname{id}}
\def\Hom{\operatorname{Hom}}
\def\End{\operatorname{End}}
\def\a{{\alpha}}
\def\lam{{\lambda}}
\def\la{{\lambda}}
\def\vareps{{\varepsilon}}
\def\sub{{\,\subseteq\,}}
\newcommand{\set}[1]{{\left\{#1\right\}}}
\newcommand{\ideal}[1]{{\left<{#1}\right>}}
\newcommand\eq[1]{{(\ref{#1})}}
\newcommand{\card}[1]{{\left|{#1}\right|}}
\def\cha{\operatorname{char}}
\def\pol{\operatorname{poly}}
\def\cent{\operatorname{Cent}}
\DeclareMathOperator{\Rad}{Rad}
\newcommand\cl[1]{{#1}^{\operatorname{cl}}}
\def\II{{I\!\!\,I}}
\def\III{{I\!\!\,I\!\!\,I}}
\def\IV{{I\!\!\,V}}
\def\co{{\,{:}\,}}
\def\ra{{\rightarrow}}
\def\N {{\mathbb {N}}}
\def\F {{\mathbb {F}}}
\def\Z {{\mathbb {Z}}}
\newcommand\dimcol[2]{{[{#1}\!:\!{#2}]}}
\newcommand\suchthat{{\,:\ \,}}
\newcommand\subjectto{{\,|\ }}
\def\isom{{\;\cong\;}}
\newcommand\compfull[3][\bullet]{{{#1}_{#2(#3)}}}
\newcommand\comp[3][\bullet]{{{#1}_{{\if1#2{}\else{#2}\fi}{\if#3K{}\else{(#3)}\fi}}}}
\newif\ifXY
\ifXY \usepackage{xy}\fi
\ifXY \xyoption{matrix}\xyoption{arrow}\xyoption{curve} \fi
\def\Zcd{{Zariski closed}}
\def\Zcr{{Zariski closure}}
\begin{document}
\title[Full quivers of representations of algebras]
{Full quivers of representations of algebras}

\date{\today}

\author{Alexei Belov-Kanel}
\author{Louis H. Rowen}
\author{Uzi Vishne}

\address{Department of Mathematics, Bar-Ilan University, Ramat-Gan
52900, Israel}
\email{\{belova,rowen,vishne\}@macs.biu.ac.il}

\thanks{This research was supported by the
Israel Science Foundation, (grant No. 1178/06).}

\begin{abstract}
We introduce the notion of the full quiver of a representation of
an algebra, which is a cover of the (classical) quiver, but which
captures properties of the representation itself. Gluing of
vertices and of arrows enables one to study subtle combinatorial
aspects of algebras which are lost in the classical quiver. Full
quivers of representations apply especially well to \Zcd\
algebras, which have properties very like those of finite
dimensional algebras over fields. By choosing the representation
appropriately, one can restrict the gluing to two main types: {\it
Frobenius} (along the diagonal) and, more generally {\it
proportional} Frobenius gluing (above the diagonal), and our main
result is that any representable algebra has a faithful
representation described completely by such a full quiver. Further
reductions are considered, which bear on the polynomial
identities.
\end{abstract}

\maketitle

\tableofcontents

\newcommand\LL[2]{{\stackrel{\mbox{#1}}{\mbox{#2}}}}
\newcommand\LLL[3]{\stackrel{\stackrel{\mbox{#1}}{\mbox{#2}}}{\mbox{#3}}}
\newcommand\LLLL[4]
{\stackrel {\stackrel{\mbox{#1}}{\mbox{#2}}}
{\stackrel{\mbox{#3}}{{\mbox{#4}}}} }

\newcommand\AR[1]{{\begin{matrix}#1\end{matrix}}}

\section{Introduction}

This paper is part of an ongoing project, elaborating \cite{B2},
\cite{B5}, to lay a firm foundation for Belov's positive solution
for Specht's problem for affine algebras with polynomial identity
(PI-algebras) in characteristic $p>0$; cf.~\cite{B2}. Briefly, two
algebras are called PI-{\it equivalent} if they satisfy the same
PIs, and the major question in PI-theory is to classify
PI-equivalence classes of algebras, thereby classifying {\it varieties}.
The first question in this direction is {\it Specht's problem},
which asks whether every variety is determined by a finite set of
PIs. Kemer \cite{Kem88}, \cite{Kem90} solved Specht's problem in
characteristic 0 for arbitrary algebras, although Specht's problem
has counterexamples in characteristic $p$, cf.~\cite{B1},
\cite{G}. One of Kemer's main results was that every affine
PI-algebra over an infinite field is PI-equivalent  to a finite
dimensional (f.d.)~algebra, and thus combinatoric properties of
finite dimensional algebras could be used to finish the solution
of Kemer's problem in this case.

In general, we recall \cite[pp.~28ff.]{BR} that an algebra $A$
over an integral domain $C$ is {\bf representable} if it can be
embedded as a $C$-subalgebra of $\M[n](K)$ for a suitable field
$K\supset C$ (which can be much larger than $C$). Besides their
intrinsic importance in many aspects of algebra, representable
algebras obviously satisfy the PIs of $n\times n$ matrices, and
are the major examples of PI-rings. One  of Kemer's theorems
states that, over an arbitrary infinite field, every relatively
free affine PI-algebra  is representable, and thus every variety
of PI-algebras contains a representable algebra.

Since the situation for affine PI-algebras over arbitrary fields
is more complicated, one is led to search representation theory
for useful tools.  Ever since the pioneering works of Gabriel
\cite{Ga} and Bern\v{s}te\u\i{}n-Gelfand-Ponomarev \cite{BGM}, one
of the most important such tools has been the quiver of a finite
dimensional algebra.

One major difficulty which arises immediately is that quivers are
most easily applied to algebras over algebraically closed fields.
Serious complications arise for algebras over finite fields. For
example, let $\F _q$ denote the field of $q$ elements, where $q$
is a power of the prime number $p$. Whereas over an infinite
field, any affine PI-algebra is PI-equivalent to a f.d.~algebra,
this assertion fails for PI-algebras over the finite field $F= \F
_q$,   as exemplified by the algebra $\(\begin{array}{cccc}
F & F[\la]  \\
0 & F[\la]
\end{array}\)$.

In this way, we are led to consider a more detailed combinatoric
object, the {\bf full quiver} of a representation of an algebra,
which turns out to be a cover of the usual quiver. Although much
of the theory can be formulated for algebras over arbitrary
commutative affine rings, we limit our attention mostly to
algebras over an (arbitrary) field $R$.

We focus on {\bf Zariski closed algebras}, described in
\cite{BRV1} and reviewed below. All f.d.~algebras are Zariski
closed, and Zariski closed algebras satisfy the basic structure
theorems that Wedderburn proved for f.d.~algebras. In particular,
the radical $J$ of a \Zcd\ algebra $A$ is nilpotent, and one can
write $A = S \oplus J$ where $S$ is a semisimple subalgebra of
$A$. The vertices of the full quiver of a representation of $A$
correspond to the diagonal blocks arising from simple components
of the semisimple part $S$, whereas the arrows correspond to
linear functions between diagonal blocks, and
 come from the radical $J$.

As noted in \cite[Lemma 3.18]{BRV1}, any algebra is PI-equivalent
to its \Zcr. Thus, the PI classification problem reduces to
classifying \Zcd\ algebras of relatively free algebras in terms of
PIs. In this paper we study full quivers of representations, with
special attention to finding the quiver having the best form; our
interest lies in how the full quiver describes the interaction
between the radical of an algebra and the semisimple part. The
applications to PI-theory are given in \cite{BRV3}.

Our study of full quivers is based on the structure of \Zcd\
algebras. In \cite{BRV1} we showed that any \Zcd\ $F$-subalgebra
$A$ of $\M[n](K)$ (where $K$ is an algebraically closed field
containing $F$) has a {\it Wedderburn decomposition} in which it
is a direct sum of the radical and the semisimple part;
furthermore, $A$ can be represented in {\it Wedderburn block
form}, in which diagonal blocks  comprise the semisimple part, and
the radical embeds above the diagonal (see \Dref{Wbf}), with
certain identifications of the blocks which we call {\it gluing}.
 Gluing occurs separately for the diagonal (semisimple) and the off-diagonal (radical) components.

Diagonal gluing is very easily described, since any identification
of diagonal blocks can be viewed as an isomorphism of matrix
 algebras, which in turn is described in terms of an
isomorphism of their  fields of scalars. But the automorphisms of
finite fields are given by powers of the Frobenius map $a\mapsto
a^p$, where $p$ is the characteristic, so all diagonal gluing can
be described in terms of {\it Frobenius gluing} of the diagonal
blocks, as illustrated in the following example: When $\cha (F) =
p$, we have the Frobenius automorphism $\a \mapsto \a ^{p^u}$ of
$F$, and one can construct the algebra
$\bigg\{\smat{\alpha^{p^u}}{*}{0}{\alpha}: \alpha \in F\bigg\}$
(which satisfies the PI $x[y,z] - [y,z]x^{p^u}$). We also call
$p^u$ the {\bf Frobenius twist} in the gluing. When $u=0$, i.e.,
when this automorphism is the identity, we say there is no
Frobenius twist, and call the gluing {\it identical gluing}.
\cite[Theorem~5.14]{BRV1} (given below as Theorem~\ref{Frobglue})
describes all gluing along the diagonal blocks.

Thus,  our theory reduces to the description of the blocks above
the diagonal, in terms of the full quiver.  Our overall objective
in this paper is to use the full quiver to determine various
properties of the underlying algebras, especially the interaction
between the semisimple part and the radical. This task is fairly
easy when the quiver is straightforward enough. The simplest kind
of quiver, consisting of a single arrow and called an {\it
elementary quiver}, already displays such interactions.

There also can be gluing above the diagonal, i.e., among arrows,
which gives rise to relations among the radical elements. We call
this \textbf{off-diagonal gluing}. Off-diagonal gluing of two
arrows entails gluing of their initial vertices and terminal
vertices. For example, one  can have identical off-diagonal gluing
of two arrows whose initial vertices have Frobenius gluing on the
diagonal; this indicates an identification of components via a
Frobenius automorphism of the underlying field.

A certain kind of gluing, called {\it{compression}}, enables us to
shrink the size of the representation, at the expense of replacing
the base field $F$ by an $F$-algebra with nilpotent elements,
which we call {\it{infinitesimals}}.

Another kind of (off-diagonal) gluing  is called {\it Frobenius
proportional}, by which we mean that gluing between two radical
components is by means of some scalar multiple (perhaps involving
a Frobenius automorphism of the initial or terminal vertices of
the arrows). If these Frobenius automorphisms are trivial, we
define the gluing to be {\it purely proportional}. But one could
also have other sorts of gluing which are much more complicated
and very difficult to describe. Different representations of the
same algebra will produce different full quivers, which on the one
hand leads to ambiguity, but also permits us to change the
representation to obtain a quiver of better form.

The main goal of this paper is to obtain the full quiver in the
best form.  Since the Frobenius twist only occurs for $F$ finite,
our theory becomes much easier over infinite fields. To obtain the
sharpest possible results,  we must limit our attention to the
full quiver of relatively free algebras. Towards this end, we have
the following theorems:

\begin{thm}[cf.~\Tref{propglu}]\label{main}
The \Zcr\ of any representable affine PI-algebra over an
infinite field has a representation and full quiver all
of whose polynomial relations  are consequences of
proportional gluing.
\end{thm}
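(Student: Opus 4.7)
The plan is to place the Zariski closure $\bar{A}$ in Wedderburn block form and, by refining the choice of representation, to force both diagonal and off-diagonal gluings into proportional form, using throughout that $F$ is infinite to kill any Frobenius twist. Since $A$ is representable, fix an embedding $A \hookrightarrow \M[n](K)$ for a suitable field $K \supset F$, and let $\bar{A}$ denote its \Zcr\ in $\M[n](K)$. Being \Zcd, $\bar{A}$ has a Wedderburn decomposition $\bar{A} = S \oplus J$ with $S$ semisimple and $J$ the nilpotent radical, and can be presented in Wedderburn block form: the simple components of $S$ appear (with multiplicities corresponding to vertex identifications) on the diagonal, and $J$ lies strictly above the diagonal, producing the arrows of the full quiver.

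Diagonal gluing is handled first. By \Tref{Frobglue}, every identification between two diagonal simple blocks is a Frobenius gluing $\a \mapsto \a^{p^u}$ of the underlying field extensions of $F$. For such a map to define an $F$-algebra automorphism it must fix $F$ elementwise, forcing $F \sub \F_{p^u}$; since $F$ is infinite this forces $u = 0$, so every diagonal gluing is identical. The same linearity argument shows that any Frobenius-proportional gluing between arrows becomes purely proportional over an infinite $F$, so the Frobenius twist never enters on the arrows either.

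The heart of the proof is the remaining off-diagonal structure: the arrows arise from radical generators, and a priori their polynomial relations can involve arbitrary $F$-linear identifications, not just scalar multiples. The plan is to replace the given embedding by a sufficiently generic one, obtained for example by enlarging to a direct sum of copies of the representation and conjugating by a diagonal element whose entries, indexed by the primitive idempotents of $S$, are scalars chosen with enough algebraic independence (possible because $F$ is infinite). Under such a conjugation, arrows that were merely accidentally identified by linear combinations become distinguishable, and the remaining identifications between arrows reduce to multiplication by a single scalar. Every polynomial relation in the resulting full quiver is then a consequence of proportional gluings of arrows. The principal obstacle is this final step --- verifying that a suitably generic change of representation really does collapse all off-diagonal $F$-linear identifications to proportional ones --- and this is where the infinitude of $F$ is used most crucially, both to kill Frobenius twists and to supply enough independent scalars to separate inequivalent arrow classes.
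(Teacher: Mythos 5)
Your treatment of the diagonal gluing is fine, and the observation that an infinite base field rules out any nontrivial Frobenius twist agrees with the paper. But the step you flag as the ``principal obstacle'' is in fact where the argument breaks down, and the device you propose will not overcome it.

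A sum relation such as $\alpha = \beta + \gamma$ among arrows (this is precisely the situation in Example~\ref{prop-zation1}) is not eliminated by taking a direct sum of copies of the representation, nor by conjugating with a generic diagonal element. Direct-summing merely duplicates the relation in each summand. Conjugating by $\operatorname{diag}(d_1,\ldots,d_n)$ multiplies each off-diagonal entry by $d_i/d_j$, so the relation becomes another true three-term linear relation with nonzero rescaled coefficients; it never degenerates to a two-term proportionality, no matter how algebraically independent the $d_i$ are. In short, generic scaling changes coefficients but cannot reduce the \emph{number} of arrows entering a dependence, which is the whole problem.

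The paper's proof uses a genuinely different mechanism: \emph{splitting vertices}. After Gauss-eliminating to find a maximal independent family of arrows (``indispensable'') and expressing every other arrow as $\alpha_u=\sum_j\theta_{j,u}\alpha_{i_j}$ (``dispensable''), one enlarges the quiver by replacing the target and source of each dispensable arrow by $k(\alpha_u)$ new glued copies and rerouting so that each copy carries a \emph{single} summand $\theta_{j,u}\alpha_{i_j}$. Thus a three-term relation $\alpha=\beta+\gamma$ does not get scaled away; it gets \emph{refined into} two arrows labelled $\beta$ and $\gamma$ between split copies of the vertices, so that in the enlarged representation (over a suitable commutative extension ring, as in Example~\ref{proportionalization}) the only gluing is proportional. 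This vertex-splitting idea is missing from your sketch and is the actual content of the theorem.
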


\bigskip
\begin{thm}[cf.~\Tref{propglurel}]
Any relatively free affine PI-algebra over an infinite field has a
representation whose quiver has no double edges (between two
adjacent vertices) in the full quiver. More generally, any two
branches between two vertices are proportionally permuted  (see
\Dref{permglu}).
\end{thm}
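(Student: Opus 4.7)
The plan is to bootstrap from \Tref{main}, which already provides, for the \Zcr\ of any representable affine PI-algebra over an infinite field, a representation whose polynomial relations are all consequences of proportional gluing. Applied to a relatively free affine PI-algebra $A$ over an infinite field $F$ (which is representable by Kemer's theorem and is its own \Zcr\ modulo the PI-equivalence noted in the introduction), this gives a starting representation in Wedderburn block form whose off-diagonal relations come only from proportional gluing. Since $F$ is infinite, there is no Frobenius twist to worry about, so all proportionality is purely scalar.

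Next I would analyze a pair of arrows $a_1, a_2$ between the same ordered pair of vertices $i, j$ in the resulting full quiver, corresponding to radical components $e_i R_1 e_j$ and $e_i R_2 e_j$ of the representation. The key leverage is relative freeness: the generic element on each arrow is a free generator of $A$ modulo its PIs, so any permutation or scalar rescaling on these generators that respects the PIs of $A$ extends to an $F$-algebra endomorphism of $A$. Thus either (a) $a_1$ and $a_2$ are proportionally glued (possibly after a permutation), in which case they already satisfy the ``proportionally permuted'' conclusion, or (b) they are non-proportionally independent; in case (b) I would modify the representation by specializing the generic scalar coefficients — which is permitted because $F$ is infinite and because the base ring may be enlarged to any $K\supset F$ — so as to collapse one of the arrows, or to identify the two via a new scalar relation. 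Since by \Tref{main} any new polynomial relation introduced must itself be a consequence of proportional gluing, this specialization either realizes a genuine proportional gluing (allowing us to absorb one arrow into the other) or reveals a contradiction with the initial freedom of the generators, forcing the two to have been proportionally permuted to begin with.

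For the general statement about branches (paths between two fixed vertices, not necessarily of length one), I would proceed by induction on the length, decomposing each branch into its constituent arrows and applying the single-edge analysis at each intermediate vertex. Compositions of proportionally permuted arrows are themselves proportionally permuted (in the sense of \Dref{permglu}), so the statement propagates through composition. The automorphisms realizing the permutations at intermediate vertices can be combined via the relative freeness of $A$, and the scalars multiply.

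The hardest part will be carrying out all such reductions simultaneously and coherently throughout the full quiver, without the elimination of one double edge producing a new double edge (or a non-proportional relation) elsewhere. This requires a well-founded order on the pairs of arrows — for instance, proceeding through the vertex poset induced by the quiver — together with a careful argument that specialization of coefficients at one edge does not disturb the already-established proportional form at lower edges. The infinite-field hypothesis is essential because it ensures enough generic scalars are available to separate genuinely distinct branches, while relative freeness guarantees the existence of the algebra automorphisms realizing the permutations that appear in the conclusion.
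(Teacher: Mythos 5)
Your plan misreads the direction of the construction. The paper does not shrink the representation by ``specializing the generic scalar coefficients'' or ``collapsing one of the arrows''; any such specialization introduces new relations and would push the algebra outside its PI-equivalence class, which is fatal for a relatively free algebra. What the paper's proof actually does is the opposite: it \emph{expands} the representation by tensoring with a nilpotent commutative affine $F$-algebra $C = F[\widehat{\theta_{i,u}}]/\langle \widehat{\theta_{i,u}} \rangle^\ell$ and passing from $V$ to $V' = V\otimes_F C$. Each dispensable arrow $\alpha_u = \sum_j \theta_{j,u}\alpha_{i_j}$, recording a linear dependence, is then replaced by several new arrows $\bar\theta_{j}\alpha_{i_j}$ from $v_{i_u}\otimes\tilde\theta$ to $w_{i_u}\otimes\bar\theta_{j}\tilde\theta$ (indexed over the monomial base $\Theta$ of $C$), whose endpoints are now distinct unless the branches were proportionally permuted to begin with. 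The single technical fact you must identify is that $\hat A \otimes_F C$ is PI-equivalent to $\hat A$ when $F$ is infinite; this, combined with relative freeness so that PI-equivalence determines the algebra, is precisely what lets one trade the old representation for the expanded one with separated endpoints.

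Two further symptoms of this misdirection: (a) you invoke $F$-algebra endomorphisms of $A$ induced by permuting or rescaling generators, but automorphisms of $A$ do not change the representation you are trying to repair — one must change $\rho$, not $A$; and (b) you anticipate an iterative ``elimination of double edges'' requiring a well-founded order to avoid regressions, but the paper's argument is a one-shot construction — once you have chosen indispensable arrows $\alpha_{i_1},\dots,\alpha_{i_m}$ by Gauss elimination and tensored with $C$, there is no round of elimination to sequence. In short, the bootstrap from Theorem~\ref{main} is the right instinct, but you have not located the mechanism: separating double edges is accomplished by passing to infinitesimals (enlarging the base ring), not by specialization or by automorphisms of the relatively free algebra.
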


\bigskip
\begin{thm}[cf.~\Tref{propglu11}]\label{main11}
Any relatively free affine PI-algebra has a representation for
whose full quiver all gluing is Frobenius proportional.
\end{thm}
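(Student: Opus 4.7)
The plan is to derive \Tref{main11} from \Tref{main} by a scalar-extension and Galois-descent argument, leveraging the fact that over a finite field every field automorphism is a Frobenius power.

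The starting point is to replace the relatively free affine PI-algebra $A$ by its \Zcr, which is PI-equivalent (as observed in the introduction). By \Tref{Frobglue}, the gluing along the semisimple (diagonal) part of any Wedderburn block representation is already Frobenius gluing, so the only remaining task is to arrange the off-diagonal (arrow) gluing to be Frobenius proportional. If the base field $F$ is infinite, this is immediate from \Tref{main}: it produces a representation whose off-diagonal polynomial relations come from proportional gluing, and proportional gluing is precisely the special case of Frobenius proportional gluing in which every Frobenius twist is trivial.

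When $F$ is finite, the plan is to pass to an infinite extension $F'/F$, apply \Tref{main} to the scalar-extended algebra $A' = A \tensor[F] F'$ to obtain an $F'$-representation whose off-diagonal gluing is purely proportional, and then descend back to $F$. The descent is governed by the Galois action: $\operatorname{Gal}(F'/F)$ acts on $A'$ fixing $A$ pointwise, and this action permutes the simple components of the semisimple part and the arrows of the full quiver. Collapsing each Galois orbit of diagonal blocks into a single block over $F$ introduces identifications within the orbit that are automorphisms of a finite extension of $F$, hence powers of the Frobenius $\a \mapsto \a^p$; on the diagonal these are the Frobenius identifications of \Tref{Frobglue}. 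The proportional gluing among the arrows, being stable under the Galois action up to the same Frobenius twists on its initial and terminal vertices, then descends to Frobenius proportional gluing in the sense of the introduction.

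The main obstacle is carrying out this descent cleanly. One must verify that the $F'$-representation furnished by \Tref{main} can be replaced by a $\operatorname{Gal}(F'/F)$-stable representation (possibly after enlarging the matrix size by adjoining Galois translates of blocks), and that the proportionality scalars transform consistently under Frobenius so that the descended gluing splits into a scalar factor and a Frobenius twist on each endpoint. Coordinating the diagonal twists coming from \Tref{Frobglue} with the off-diagonal twists inherited from the Galois action, and matching Galois orbits of arrows to gluing classes, is the delicate bookkeeping step needed to certify that the resulting full quiver over $F$ is Frobenius proportional throughout.
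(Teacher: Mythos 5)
Your proposal diverges from the paper's proof and, more importantly, contains a gap that I do not think can be repaired. The central idea you propose --- pass from the finite field $F$ to an infinite extension $F'$, apply \Tref{main} over $F'$, and recover the finite-field statement by Galois descent --- runs into the following obstruction. The relations that produce Frobenius gluing are $q$-polynomial relations such as $\lam_{22} = \lam_{11}^q$, which define additive subgroups that are \emph{not} $F'$-subspaces. The multiplication in $A$ involving Frobenius twists is $F$-bilinear but only \emph{semi}-linear relative to any larger field, so the scalar-extended object $A' = A \tensor[F] F'$ does not carry a natural $F'$-algebra structure that reflects the original gluing; its Zariski closure over $F'$ picks up the $F'$-linear closure of what were genuinely $q$-polynomial constraints, and the resulting full quiver over $F'$ bears no controlled relation to the quiver of $A$. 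Consequently, even if \Tref{main} applies to $A'$ in some form, the output cannot be ``descended'' back: $\operatorname{Gal}(F'/F)$ for an infinite extension of a finite field is not a finite group acting on a chosen representation, and there is no mechanism by which collapsing Galois orbits of blocks would reconstruct the $q$-polynomial dependences among arrows that the finite-field case genuinely requires.

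The paper's actual route is entirely different and supplies precisely the machinery your proposal lacks. It works over $F$ directly: the Zariski closure of $A$ is PI-equivalent to a representable affine algebra of finite PI-rank, and its radical decomposes (via \Pref{prereq1}) as $V_0 \oplus V_1$ with $V_0$ the irreducible connected component of $0$ and $V_1$ finite. On $V_0$ the key technical input is \Pref{proppow}, which uses the structure of additively closed subvarieties over finite fields (a theorem of Kambayashi--Miyanishi--Takeuchi/Tits type, encoded in $q$-power series) to select independent arrow variables and express the remaining arrows as $q$-polynomials in them. After truncating to $q$-polynomials of bounded degree, one performs the analogue of the arrow-splitting argument from \Tref{propglurel}, but now the ``dispensable'' arrow relations are $q$-polynomial rather than linear, and the result is proportional \emph{Frobenius} gluing rather than mere proportional gluing. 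Your proposal never accesses this $q$-power-series mechanism, and the Frobenius twists you claim to extract from Galois orbits have no source in your construction.
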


We stress that all the polynomial relations of an algebra are
consequences of gluing relations of its full quiver. (For example, in
Theorem~\ref{propglu}  they must be consequences of proportional
gluing). Due to interactions of quiver branches for finite fields, in
order to obtain the sharpest reformulation of
Theorem~ref{propglurel1}, we need to introduce gradings in the
forthcoming paper \cite{BRV3}.

In the appendix, we introduce the notion of pseudo-quivers, to
prove the following result:

\bigskip
\begin{thm}[cf.~\Tref{nilpcan}]
Any \Zcd\ algebra has a representation for whose full quiver the maximal length of
the branches equals the index of nilpotence of the radical minus~1.
\end{thm}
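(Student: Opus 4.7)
The plan is to construct the desired representation by refining the regular representation of $A$ using the filtration by powers of the radical. Write $A = S \oplus J$ in Wedderburn form, and let $k$ be the nilpotence index of $J$, so that $J^k = 0$ and $J^{k-1} \neq 0$.

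Choose a basis of $A$ compatible with the chain
\[ A \supset J \supset J^2 \supset \cdots \supset J^{k-1} \supset J^k = 0, \]
by concatenating a basis of $A/J$ together with lifts of bases of $J/J^2$, $J^2/J^3$, $\ldots$, $J^{k-1}$. In this basis, the regular representation $\rho\co A \to \End_K(A)$ sends each $a \in J$ to a strictly block-upper-triangular matrix, because $\rho(a)(J^\ell) = a \cdot J^\ell \subseteq J^{\ell+1}$ for every $\ell$. Refine the basis further within each filtration level so as to decompose $J^\ell/J^{\ell+1}$ into simple $S$-submodules; this places $\rho(A)$ into Wedderburn block form in which every diagonal block is a simple summand at some level, and $\rho(J)$ lies strictly above the diagonal relative to the partial order that orders blocks by filtration level.

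Every arrow in the resulting full quiver therefore strictly increases the filtration level, so any branch has length at most $k-1$ (the levels range only over $\{0, 1, \ldots, k-1\}$). Conversely, since $J^{k-1} \neq 0$, choose $a_1, \ldots, a_{k-1} \in J$ with $a_1 \cdots a_{k-1} \neq 0$; applying this product to $1 \in A$ yields a nonzero element at level $k-1$. A total filtration increase of $k-1$ achieved in $k-1$ strictly positive steps forces each step to be exactly $1$, so there must exist diagonal blocks $B_0, B_1, \ldots, B_{k-1}$ at levels $0, 1, \ldots, k-1$ respectively and nonzero block entries of $a_s$ from $B_{s-1}$ to $B_s$ for each $s$. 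This chain of blocks is a branch of length $k-1$ in the full quiver, giving the matching lower bound.

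The main obstacle is organizing the combinatorial bookkeeping of the refined block structure so that it coincides with a full quiver in the sense of the appendix; this is precisely the role played by the pseudo-quiver formalism introduced there. Once one verifies that the arrows read off from the block decomposition are exactly those predicted by the Peirce pieces of $J$ acting between the chosen simple submodules of consecutive graded pieces, and that Zariski closedness is preserved by the passage to the refined basis, the upper and lower bounds above combine to give the required equality.
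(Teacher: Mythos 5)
Your approach is genuinely different from the paper's. The paper works entirely inside the pseudo-quiver formalism: it invokes Proposition~\ref{basicquiv2} to arrange that arrows correspond to a linearly independent set of linear functionals, then performs an ``open sandwich'' (choosing one branch and zeroing out the others) to produce a path of length $m-1$ in the pseudo-quiver, and finally passes back to the full quiver of the same representation. You instead build a new representation from scratch --- the regular representation with a basis adapted to the radical filtration $A \supset J \supset J^2 \supset \cdots$ --- and read off the bound from the fact that radical arrows must shift filtration degree. The filtration idea is appealing because it makes the upper bound (branch length $\le m-1$) genuinely transparent, which is not automatic: as the paper's \Eref{mess} and Subsection~\ref{ss:54} on degenerate gluing illustrate, a branch of the full quiver need not produce a nonzero product of radical elements, so the upper bound is \emph{not} a formality. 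Your construction sidesteps this because in the filtered regular representation, arrows strictly change filtration level and levels only range over $\{0,\dots,m-1\}$.

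That said, there are two real gaps. First, the map $\rho\co A \to \End_K(A)$ is ill-defined when the base field $F$ is finite, because a \Zcd{} algebra $A$ is an $F$-subalgebra of $\M[n](K)$, not a $K$-space. You would need to run the argument on the linear closure $B = KA$, filter by $KJ^\ell = (KJ)^\ell$, and then verify that (a) the resulting representation restricted to $A$ is faithful, (b) it can be put in Wedderburn block form in the sense of \Dref{Wbf} with the Frobenius gluing of $A$ correctly reflected in the diagonal blocks, and (c) the image is Zariski closed. These are not bookkeeping afterthoughts; (b) in particular requires matching the $S$-isotypic decomposition of each graded piece $KJ^\ell/KJ^{\ell+1}$ with a choice of matrix units compatible with the $F_u$-structure, which is where the Frobenius twists re-enter. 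Second, your closing paragraph appeals to ``the pseudo-quiver formalism'' to close the gap between your block structure and an actual full quiver, but this is not what pseudo-quivers do in the paper. They are used there (via change of basis within a glued component, Proposition~\ref{basicquiv2}) precisely to make the arrows linearly independent, which is what lets the paper's ``open sandwich'' avoid cancellation --- the same cancellation your filtration argument is designed to rule out directly. So the appeal is circular rather than a patch: either you carry out the Wedderburn-block-form verification yourself, or you invoke Proposition~\ref{basicquiv2} as the paper does, but then you are essentially reproducing the paper's proof, not supplementing yours.

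A minor point worth flagging: in the lower-bound step you must also check that the arrows $B_{s-1}\ra B_s$ you extract are \emph{primitive} (so they survive the convention of erasing non-primitive arrows). This does hold, since any intermediate vertex $B'$ with $B_{s-1}\ra B'\ra B_s$ would sit at a filtration level strictly between $s-1$ and $s$, which is impossible; but the argument should be said rather than left implicit.
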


A maximal path in the full quiver is called a {\it
branch}. Gluing between branches is called {\it total}
if for each arrow in one branch there is an arrow of the other
branch, glued  proportionally to it. One candidate for the best
form of a full quiver is called {\it \indecomposible},
cf.~Definition~\ref{candef1}.

In a subsequent paper \cite{BRV3}, we discuss canonization
theorems for full quivers (as well as pseudo-quivers) and apply
this theory towards the classification of varieties of
PI-algebras, stressing the role of pseudo-quivers. This paper
contains the part of the theory concerning the representations of
algebras; in \cite{BRV3} we introduce techniques of evaluating
polynomials by means of the full quivers, thereby enabling us to
further improve full quivers of relatively free algebras.

This theory thereby enables us to obtain clearer proofs of some
results about varieties of PI-rings and also to motivate and unify
several interesting examples in PI-theory.

One specific application in \cite{BRV3}: We define a {\bf
parametric} set of identities of an algebra~$R$ to be a set of
identities of the form
$$\left\{ \sum \xi_j f_j  :  \xi_j \in F \right\},$$ where the $f_j$   are homogeneous polynomials that are not
identities of $R$.

Branches in the quiver, together with gluing, often define a
parametric set of PIs, although these identities are not always
definable over the base field $F$ but sometimes require passing to
a transcendental field extension of $F$. Other identities arise
from gluing among different paths of the canonical quiver.

In another direction, one can use quivers to study ring-theoretic
properties of relatively free PI-algebras; for example, in
\cite{BRV3} we prove the following result:

\bigskip

 \textbf{Theorem.} {\it A relatively free   affine algebra $A$ is weakly
Noetherian iff it has a faithful representation for which each
connected component of the full quiver consists of a single arrow,
with each vertex having matrix degree at most 1.}

\section{Review of Zariski closed algebras}

In this section we review Zariski closed algebras, studied in
\cite{BRV1}, which form the foundation for the study of
representable PI-rings. Suppose $F \sub K$ is an algebraically
closed field.  Let $\rho \co A \to \M[n](K)$ be a representation
of a ring $A$. The {\bf \Zcr} $\cl{\rho(A)}$ is the closure of
$\rho(A)$ with respect to the Zariski topology of $\M[n](K)$.

This definition clearly depends on the choice of the
representation $\rho$. Nevertheless, abusing language slightly, we
assume that $\rho$ is a faithful representation, taken as given,
and thus we view the ring $A$ as contained in $\M[n](K)$; we call
$\cl {\rho (A)}$ the {\bf \Zcr} of~$A$, denoted as $\cl{A}$. For
convenience, we assume that our representable ring $A$ is an
$F$-algebra.

\begin{lem}\label{nilrad} The radical $J$ of the Zariski closure of
any affine PI-algebra $A_0$ is nilpotent.\end{lem}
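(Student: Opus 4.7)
The plan is to reduce the question to a statement about arbitrary subalgebras of $\M[n](K)$ and then exploit the natural action on $K^n$. Since $A_0$ is affine PI and the Zariski closure is defined via a fixed representation (as reviewed just before the lemma), we may take a faithful embedding $A_0 \sub \M[n](K)$ for some $n$ and some algebraically closed field $K \supset F$. The Zariski closure $A := \cl{A_0}$ sits inside the same $\M[n](K)$, so it suffices to prove the stronger claim that for \emph{any} subalgebra $A \sub \M[n](K)$ one has $J(A)^n = 0$.

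To carry this out I would view $V = K^n$ as a (faithful) left $A$-module and choose a composition series
\[
0 = V_0 \subset V_1 \subset \cdots \subset V_k = V
\]
of $V$ as an $A$-module. Such a series exists, and has length $k \le n$, because any strictly ascending chain of $A$-submodules of $V$ is in particular a strictly ascending chain of $K$-subspaces, and $\dim_K V = n$. Each composition factor $V_i/V_{i-1}$ is a simple $A$-module, and the Jacobson radical of any ring annihilates every simple module over it; equivalently $J(A)\cdot V_i \sub V_{i-1}$ for all $i$. Iterating this inclusion yields $J(A)^k\cdot V = 0$, and since the action of $A$ on $V$ is faithful we conclude $J(A)^k = 0$ in $\M[n](K)$, and therefore $J(A)^n = 0$.

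I do not anticipate any real obstacle. The whole argument is an application of standard facts (the Jacobson radical annihilates simples, and a finite-dimensional module has a composition series); no feature of the Zariski topology, of Wedderburn-type decompositions, or of PI-theory proper is actually invoked. The Zariski closure and affine PI hypotheses enter only to guarantee that $A$ itself embeds into $\M[n](K)$, so that the action on $V$ is at hand. The resulting bound $n$ on the index of nilpotence is the expected one, matching the ambient matrix size, and is the natural starting point for the more refined branch-length bounds that appear later in the paper.
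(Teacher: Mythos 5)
Your proof has a genuine gap, and the warning sign is precisely the sentence where you announce that ``no feature of the Zariski topology\dots or of PI-theory proper is actually invoked.'' The intermediate claim you reduce to --- that for \emph{every} subring $A \subseteq \M[n](K)$ one has $J(A)^n = 0$ --- is false. A counterexample already with $n=1$: $\Z_{(p)} \subset \Q \subset \overline{\Q} = \M[1](\overline{\Q})$ has $J(\Z_{(p)}) = p\Z_{(p)}$, which is not nilpotent. Where your argument breaks is the step ``any strictly ascending chain of $A$-submodules of $V$ is in particular a strictly ascending chain of $K$-subspaces.'' This is only true when $A$ is a $K$-algebra. A Zariski closed $F$-subalgebra of $\M[n](K)$ need not be a $K$-subalgebra when $F$ is finite --- which is precisely the case the paper cares about (recall Example~\ref{fields}: the Zariski closure of a finite subfield $F_1 \subset K$ is $F_1$ itself, which is not a $K$-subspace). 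So there is no reason for $K^n$ to have a finite composition series over $A$, and the index-of-nilpotence bound $n$ has no basis. (When $F$ is infinite, the Zariski closure coincides with the $K$-linear span and your argument does go through; but then the lemma is essentially the easy case.)

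The paper's proof uses the hypotheses you discarded: since $A_0$ is affine PI, Braun--Kemer--Razmyslov gives $\Rad(A_0)^\ell = 0$; the monomial $x_1 \cdots x_\ell$ is then an identity of $\Rad(A_0)$, hence of its Zariski closure; and by \cite[Proposition~3.21]{BRV1} that closure \emph{is} $J = \Rad(\cl{A_0})$, whence $J^\ell = 0$. Both the affine PI assumption and the structural result on Zariski closures are load-bearing. If you want a composition-series style argument, you would first need something that makes $A$ Artinian on the relevant module (e.g., the Wedderburn block form of Theorem~\ref{zarcl1}, where $J$ lies strictly above the diagonal --- but that is essentially assuming the conclusion).
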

\begin{proof} Let $J_0$ be the radical of $A_0$. By
the Braun-Kemer-Razmyslov theorem, $J_0^\ell = 0$ for some $\ell$.
Hence $x_1 \cdots x_\ell$ is an identity of $J_0$, and thus of its
Zariski closure, which is~$J$ by \cite[Proposition 3.21]{BRV1}. In
other words, $J^\ell = 0$.
\end{proof}

Since any $K$-subalgebra of $\M[n](K)$ is
\Zcd, we can take the \Zcr\ of $A$ in any $K$-subalgebra $B
\subseteq \M[n](K)$ that contains $\rho(A)$. In particular, we can
take $B$ to be the {\bf linear closure} $ K A,$ the $K$-subspace
of $\M[n](K)$ spanned by $A$.

When $F$ is infinite, the \Zcr\ is just the linear closure. Thus,
our special interest in the \Zcr\ is for algebras over finite
fields. As shown in \cite{BRV1}, Zariski closed algebras satisfy
many of the structural properties of finite dimensional algebras
over algebraically closed fields.

\begin{rem}\label{BA}
By \cite[Proposition 3.20]{BRV1}, we may assume that every ideal
of $B= KA$ intersects $A$ nontrivially. These assumptions are
implicit in the rest of the text. This ties the structure of $A$
to the structure of $B$.
\end{rem}

Viewing $A \subseteq B$ explicitly as in Remark~\ref{BA}, we fix a
base $v_1,\dots,v_m$ of $B$ over~$K$, where each $v_i \in A$.
Although $A$ need not be a $K$-algebra, every element of $A$ is
written as $\alpha_1 v_1+\dots+\alpha_m v_m$ for suitable
$\alpha_i \in K$. In order to study the coefficients, we study
their set $\pol(A)$ of {\bf polynomial relations}, which are
defined as those polynomials $f \in K[\lam_1,\dots,\lam_m]$ such
that $f(\alpha_1,\dots,\alpha_m) = 0$ for every $\alpha_1 v_1 +
\dots + \alpha_m v_m \in A$. In particular, polynomial relations
must have constant term $0$. One advantage of this point of view
is that an algebra is \Zcd\ if and only if its polynomial
relations serve as its defining relations. On the other hand,
linear transformations are continuous in the Zariski topology,
implying that the \Zcr\ is independent of the choice of base of
$B$.

One well-known connection to PI-theory is that, when we designate
a given base $B$, any PI of the algebra $A$ can be viewed as a set
of polynomial relations in the coefficients of the elements of
$A$, written in terms of the base $B$; cf.~\cite[Proposition
3.17]{BRV1}. Hence the \Zcr\ $\cl{A}$ of $A$ is an algebra, which
is PI-equivalent to $A$, and we usually take our algebra $A$ to be
\Zcd.

\begin{exmpl}\label{fields} {\ }

(i) The \Zcd\ $F$-subalgebras of an algebraically closed field $K$
are precisely the {\emph {finite}} intermediate subfields $F\sub
F_1$ of $K$, as well as $K$ itself. Indeed, any infinite field
cannot have nontrivial polynomial relations, and a finite field
$F_1$ of $q$ elements satisfies the polynomial identity $\la _1
^q-\la _1$. But $\cl{F_1}$ must also satisfy the identity $\la _1
^q-\la _1$ and thus have at most $q$ elements; hence, $\cl{F_1} =
F_1.$

(ii) Other examples of commutative \Zcd\  subalgebras of
$\M[n](K)$ include
$$\set{\(\begin{array}{cccc}
\alpha & \beta & \gamma\\
0 & \alpha & \beta \\
0 & 0 & \alpha
\end{array}\) \suchthat \alpha , \beta, \gamma \in K}\cong K[\lam]/\ideal{\lam^3}.$$
\end{exmpl}

The starting point in the representation theory of finite
dimensional algebras may well be Wedderburn's Principal Theorem.

\begin{thm}[Wedderburn's Principal Theorem]\label{WedPrinc} If $A$ is a f.d.~algebra over an algebraically closed field $K$, then $A$ has a Wedderburn
decomposition $A = S\oplus J,$ where $J = \Rad(A)$ and $S \isom
A/J$ is a subalgebra of $A$ that is a direct sum of matrix
algebras over $K$.\end{thm}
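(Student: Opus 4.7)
The plan is to induct on the nilpotence index $\ell$ of $J = \Rad(A)$, which is finite because $A$ is finite-dimensional and hence Artinian. The base case $\ell \le 1$ is immediate: $J = 0$ makes $A$ semisimple, and the Wedderburn--Artin theorem over the algebraically closed field $K$ expresses $A$ as $\bigoplus_r \M[n_r](D_r)$ with each $D_r$ a finite-dimensional division $K$-algebra; algebraic closedness of $K$ forces $D_r = K$, and we may take $S = A$.

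For the inductive step, put $I = J^{\ell-1}$, a two-sided ideal with $I^2 = 0$. Applying the inductive hypothesis to $A/I$ (whose radical has nilpotence index $\ell-1$) yields a semisimple subalgebra $\bar S \sub A/I$ with $A/I = \bar S \oplus J/I$. Let $\tilde S \sub A$ be the preimage of $\bar S$; then $\tilde S$ is a subalgebra whose radical is $I$, and $\tilde S/I \isom \bar S$ is a direct sum of matrix algebras over $K$. It now suffices to find a subalgebra $S \sub \tilde S$ mapping isomorphically onto $\bar S$, for then $A = S \oplus J$. Thus we reduce to the case $J^2 = 0$ with $A/J \isom \bigoplus_r \M[n_r](K)$, in which the task is to lift this semisimple quotient to a subalgebra of $A$.

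I would perform the lift in two stages. \emph{Stage one: diagonal idempotents.} Given a preimage $a$ of an idempotent $\bar e \in A/J$ with $a^2 - a \in J^k$, the iteration $a \mapsto 3a^2 - 2a^3$ yields a new preimage whose error lies in $J^{2k}$, so after finitely many steps one obtains a genuine idempotent $e$ lifting $\bar e$. Performing the lifts successively inside the corners $(1 - \sum_{\text{previous}} e)\,A\,(1 - \sum_{\text{previous}} e)$ automatically enforces orthogonality, producing $\{e_{ii}^{(r)}\}$ that are mutually orthogonal and sum to $1$. \emph{Stage two: off-diagonal units.} Pick a preimage $a_i$ of $\bar e_{i1}^{(r)}$ and set $e_{i1}^{(r)} := e_{ii}^{(r)} a_i e_{11}^{(r)}$, and analogously lift $\bar e_{1i}^{(r)}$; the product $e_{1i}^{(r)} e_{i1}^{(r)}$ equals $e_{11}^{(r)}$ modulo $J$, with discrepancy in $e_{11}^{(r)} J e_{11}^{(r)}$, and a rescaling of one factor (using that the discrepancy squares to $0$) forces exact equality. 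Setting $e_{ij}^{(r)} := e_{i1}^{(r)} e_{1j}^{(r)}$ then produces lifted matrix units satisfying the full multiplication table $e_{ij}^{(r)} e_{kl}^{(s)} = \delta_{rs}\delta_{jk}\, e_{il}^{(r)}$ on the nose.

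The $K$-subalgebra $S$ generated by these lifts is $\isom \bigoplus_r \M[n_r](K)$, projects isomorphically onto $A/J$, and by dimension $A = S \oplus J$. The principal obstacle is this off-diagonal stage: the matrix-unit relations hold modulo $J$ by construction, but arranging them to hold exactly requires sequential corrections, and it is precisely the reduction to $J^2 = 0$ that keeps these corrections safe --- each ``error'' squares to zero and so cannot propagate back into relations already secured.
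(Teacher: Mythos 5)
The paper quotes Wedderburn's Principal Theorem as a classical fact and gives no proof, so there is nothing in the text to compare against; the task here is essentially to reproduce the classical argument, and you have done so correctly. The route you take --- reduce to radical-index two by passing through the square-zero ideal $J^{\ell-1}$; lift idempotents via $a\mapsto 3a^2-2a^3$ inside nested Peirce corners to get an orthogonal system summing to $1$; pick off-diagonal representatives in the correct Peirce components and rescale one factor so that $e_{1i}e_{i1}=e_{11}$ --- is the standard textbook proof in the algebraically closed (more generally, $A/J$ separable) setting, and all the steps check out. Two small points are left implicit and deserve a line each. First, the inductive step does not merely invoke the inductive hypothesis: it also applies the two-stage lifting argument to $\tilde S$, whose own radical $I$ already satisfies $I^2=0$; so one should either state the $J^2=0$ case as a free-standing lemma first and then do the filtration reduction, or acknowledge that the lift is re-used at every level rather than only in a base case. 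Second, the companion identity $e_{i1}e_{1i}=e_{ii}$ does not come from the rescaling itself but from a separate observation: $e_{i1}e_{1i}$ is an idempotent lying in $e_{ii}Ae_{ii}$ and congruent to $e_{ii}$ modulo $J$, so its difference $z$ from $e_{ii}$ satisfies $z = 2z$ (using $z^2=0$), hence $z=0$. Neither remark is a genuine gap; the proof is correct.
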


The following theorem, parallel to Wedderburn's principal theorem,
gives us the basic structure of \Zcd\ algebras over arbitrary
fields.

\begin{thm}[First Representation Theorem, {\cite[Theorem 3.33]{BRV1}}]\label{zarcl1}
Let $A$ be a representable $F$-algebra. If $A = \cl{A}$,
then $A$ has a Wedderburn decomposition $A = S\oplus J$, where $J
= \Rad(A)$ and $S \cong A/J$ is a subalgebra of $A$ that is
isomorphic to a direct sum of matrix algebras over fields (which
are closed $F$-subfields of $K$).
\end{thm}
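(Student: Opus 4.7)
The plan is to derive the decomposition from the classical Wedderburn theorem (\Tref{WedPrinc}) applied to the ambient $K$-algebra $B := KA \sub \M[n](K)$, and then descend the splitting to $A$ using the hypothesis $A = \cl A$.

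First, since $B$ is a finite dimensional $K$-subalgebra of $\M[n](K)$, \Tref{WedPrinc} gives $B = S_B \oplus J_B$ with $J_B = \Rad(B)$ and $S_B$ a direct sum of matrix algebras over $K$. I would then identify the radical of $A$ as $J = J_B \cap A$: the inclusion $J_B \cap A \sub \Rad(A)$ is immediate from \Tref{nilrad} since the intersection is nilpotent, while the reverse follows from the embedding $A/(J_B\cap A) \hra B/J_B \isom S_B$ into a semisimple algebra.

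The substantive step is to realize a semisimple complement $S$ inside $A$ (and not merely inside $B$). I would lift a complete orthogonal system of central idempotents of $B/J_B$ to $B$ via the standard nilpotent-ideal argument, and then invoke the Malcev--Wedderburn uniqueness of the Wedderburn splitting up to conjugation by $1+j$ for some $j \in J_B$. The key point is that ``lying in $A$'' is encoded by a system of polynomial relations on the coefficient vector with respect to a fixed $F$-basis of $B$, and the hypothesis $A = \cl A$ says these polynomial relations cut out $A$ inside $B$. A Zariski-type argument on the orbit of splittings then allows one to choose idempotents $e_1,\dots,e_r \in A$. Once this is achieved, each corner $e_i A e_i / e_i J e_i$ is a simple \Zcd\ $F$-algebra, so by the structure theory of \cite{BRV1} combined with \Eref{fields}(i) --- which asserts that the only closed $F$-subfields of $K$ are the finite intermediate fields and $K$ itself --- each such corner is isomorphic to $\M[d_i](F_i)$ for a closed $F$-subfield $F_i \sub K$. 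Assembling these pieces gives the desired subalgebra $S$ with $A = S \oplus J$.

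The main obstacle is the descent step. For $F$ infinite, $KA$ and the \Zcr\ of $A$ coincide, so any splitting of $B$ can be realized directly inside $A$; the delicate case is $F$ finite, where the polynomial relations defining $A$ inside $B$ are genuinely nonlinear, and the orbit of splittings under $1 + J_B$-conjugation must be carefully examined to find one whose idempotents lie in $A$. It is here that \Eref{fields}(i) is indispensable, as it forces the field of definition of each simple matrix component to be a closed subfield of $K$ rather than $K$ itself, which is exactly what makes a splitting available over~$A$.
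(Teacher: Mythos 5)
This statement is quoted in the paper from \cite[Theorem~3.33]{BRV1} and is not proved here, so there is no in-paper proof to compare against. Your strategy --- split $B = KA$ via the classical Wedderburn Principal Theorem and then descend to $A$ using $A = \cl{A}$ --- is a plausible route, but as written it has a genuine gap at precisely the point you flag as ``the substantive step.'' First, the minor point: identifying $\Rad(A)$ as $J_B \cap A$ is correct, but you do not need Lemma~\ref{nilrad} for the forward inclusion; $J_B$ is already nilpotent as the radical of a finite dimensional $K$-algebra, hence so is $J_B \cap A$ (and invoking Lemma~\ref{nilrad} anyway presupposes $A$ is affine, which the present theorem does not assume).

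The real problem is the sentence ``A Zariski-type argument on the orbit of splittings then allows one to choose idempotents $e_1,\dots,e_r \in A$.'' That is an announcement, not an argument: nowhere do you actually use the hypothesis $A = \cl{A}$ to show that the orbit $\{(1+j)S_B(1+j)^{-1} : j \in J_B\}$ of Wedderburn complements of $J_B$ in $B$ meets $A$ in a full semisimple complement. Lifting a few orthogonal idempotents into $A$ (which does follow from nilpotence of $J$) is much weaker than producing an entire subalgebra $S \sub A$ with $A = S \oplus J$; for the latter you must descend the whole conjugate of $S_B$, and this is exactly where the Zariski-closedness must do real work. Your outline never makes that step concrete. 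Moreover, the closing claim that Example~\ref{fields}(i) is ``indispensable'' because it ``is exactly what makes a splitting available over $A$'' has the logic backwards: that example classifies the closed $F$-subfields of $K$ and therefore tells you what the diagonal blocks of a given complement $S$ must look like, but it is a description of the output of the splitting, not a mechanism that produces one.
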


Thus, the basic question in studying these algebras is determining
how the radical~$J$ interacts with the semisimple part $S \cong
A/J$.

\subsection{Polynomial relations of \Zcd\ algebras}\label{ss:Zc}

 We
review some of the results about polynomial relations of \Zcd\
algebras proved in \cite{BRV1}.

\begin{defn} A polynomial of the form $\sum _{i=1}^m
\sum _{j\ge 1} c_{ij} \lam_i^{q_{ij}}$ is called a {\bf
$q$-polynomial}, if each $q_{ij}$ is a $q$-power with $q=
\card{F}$, where we take $q = 1$ if $F$ is infinite.
\end{defn}

In \cite{KombMiyanMasayoshi}, \cite{Miyanishi}, \cite{Tits} it is
shown that every additively closed subvariety of affine spaces is
given by a system of equations each of which is a $q$-polynomial.
In other words, we have the following assertion, which also can be
seen by combining Proposition 4.7 and Theorem 4.10 of \cite{BRV1}:

\begin{cor}\label{addit} $\pol(A)$ is a finitely
generated $F[\phi]$-module (where $\phi$ is the Frobenius map
$\lam_i \mapsto \lam_i^q$). It has a finite set of defining
relations, and every polynomial relation is a consequence of
these.
\end{cor}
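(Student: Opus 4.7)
The plan is to reinterpret $\pol(A)$ as the vanishing ideal of an additively closed Zariski closed subvariety of $K^m$ and then to invoke the cited theorem of \cite{KombMiyanMasayoshi, Miyanishi, Tits}. First I would set
\[ V \;=\; \{(\alpha_1, \dots, \alpha_m) \in K^m : \alpha_1 v_1 + \dots + \alpha_m v_m \in A\}. \]
Since $A$ is an additive subgroup of $B = KA$ and the $K$-linear coordinate map $K^m \to B$ is continuous in the Zariski topology, $V$ is both additively closed and Zariski closed in $K^m$ (using the standing hypothesis $A = \cl A$). By construction, $\pol(A)$ is exactly the vanishing ideal of $V$ in $K[\lam_1, \dots, \lam_m]$.

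Next I would apply the Kemper--Miyanishi--Tits theorem to $V$ to produce a finite list of $q$-polynomials $f_1, \dots, f_r$ whose common zero locus is $V$. These are the candidates for the finite set of defining relations asserted in the corollary.

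For the $F[\phi]$-module structure, I would observe that the $F$-vector space of $q$-polynomials in $\lam_1, \dots, \lam_m$ carries the natural action of the Frobenius twist $\phi: \lam_i \mapsto \lam_i^q$, making it a free $F[\phi]$-module of rank $m$ on generators $\lam_1, \dots, \lam_m$. The subset of $q$-polynomials vanishing on $V$ is an $F[\phi]$-submodule, and is finitely generated because $F[\phi]$ is Noetherian (a commutative polynomial ring in $\phi$ when $F = \F_q$, since Frobenius is the identity on $\F_q$; trivially so when $F$ is infinite, where by convention $q = 1$). The Kemper--Miyanishi--Tits generators $f_1, \dots, f_r$ can then be taken as the $F[\phi]$-generators.

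The main obstacle I anticipate is justifying that ``every polynomial relation is a consequence'' of these finitely many $q$-polynomial relations, since $\pol(A)$ a priori contains arbitrary polynomials, not only $q$-polynomials. One must show that the ideal $\langle f_1, \dots, f_r \rangle$ of $K[\lam_1, \dots, \lam_m]$ coincides with the full vanishing ideal $\pol(A)$, and not merely that the two cut out the same variety. Since $V$ is a group scheme defined by additive ($q$-)polynomials, its defining ideal is already radical, supplying the missing radicality step; this is the essence of the combination of Proposition~4.7 and Theorem~4.10 of \cite{BRV1} referenced in the corollary, and is where the delicate bridge between the geometric notion of consequence and the $F[\phi]$-module notion must be made precise.
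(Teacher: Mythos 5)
Your proposal follows exactly the route the paper itself takes: it offers no independent argument for Corollary~\ref{addit}, but treats it as a restatement of the Kambayashi--Miyanishi--Takeuchi/Miyanishi/Tits result on additively closed subvarieties (``given by a system of equations each of which is a $q$-polynomial''), combined with Proposition~4.7 and Theorem~4.10 of \cite{BRV1}, and you have correctly identified that the delicate point is passing from having the same zero locus to actual ideal membership. One caveat: your parenthetical justification that ``since $V$ is a group scheme defined by additive ($q$-)polynomials, its defining ideal is already radical'' is not correct as a general principle --- commutative group schemes in characteristic $p$ can be non-reduced (e.g.\ $\alpha_p$, cut out by the additive polynomial $\lam^p$), so the ideal generated by a set of $q$-polynomials vanishing on $V$ need not a priori be radical; this is precisely the content supplied by the cited results, not a freebie from group-scheme theory.
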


\begin{defn}\label{2.8}
A polynomial relation is of {\bf $F$-Frobenius type}
if it has one of the following three forms, where $q$ is as in the
previous definition:

(i) $\lambda _i = 0,$

(ii) $\lambda _i= \lambda _i^s ,$ where $s$ is a $q$-power, or

(iii) $\lambda _i = \lambda _j^s$, $j \ne i$, where $s$ is a
$q$-power.
\end{defn}

We quote \cite[Theorem 4.15]{BRV1}:

\begin{thm}\label{comZar}
Suppose $A$ is a commutative, semiprime
\Zcd\ $F$-subalgebra of a finite dimensional commutative
$K$-algebra $B$. Then $\pol(A)$ is generated by finitely many
polynomial relations of $F$-Frobenius type.
\end{thm}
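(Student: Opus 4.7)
The plan is to decompose $A$ via the First Representation Theorem, describe how each simple component sits inside $B$ as a ``twisted diagonal'' across the minimal idempotents of $B/\Rad(B)$, and read off the defining equations of $A$ in a coordinate basis aligned with this picture. By \Tref{zarcl1}, since $A$ is semiprime its radical is zero, and commutativity forces every simple block to be one-dimensional; hence $A=A_1\oplus\cdots\oplus A_k$, and by \Eref{fields}(i) each $A_i$ is either a finite intermediate field $F\sub F_{q_i}\sub K$ (with $q_i$ a power of $q=\card{F}$) or $A_i=K$ itself. By \Rref{BA} we take $B=KA$; passing to $B/\Rad(B)$, which is a product $K^m$ of copies of $K$ since $B$ is finite-dimensional commutative over the algebraically closed $K$, the semiprimality condition $A\cap\Rad(B)=0$ embeds $A$ into $K^m$. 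Each summand $A_i$ thus lands in some sub-product $K^{t_i}\sub K^m$ through $t_i$ distinct field embeddings $\sigma_1,\ldots,\sigma_{t_i}\co A_i\hra K$.

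Next I would show that the $\sigma_\ell$'s are Frobenius powers of $\sigma_1$. For $A_i$ finite this is the standard fact that the $F$-embeddings of $F_{q_i}$ into $K$ are powers of the Frobenius $\alpha\mapsto\alpha^q$; for $A_i=K$, an $F$-embedding $K\hra K$ arising as a projection of a \Zcd\ subalgebra must be a polynomial ring homomorphism, hence of the form $\alpha\mapsto\alpha^{q^u}$ by comparing additive and multiplicative structure. Thus $\sigma_\ell(a)=\sigma_1(a)^{q^{u_{i,\ell}}}$ for certain $q$-powers $q^{u_{i,\ell}}$. Now choose a $K$-basis of $B$ consisting of lifts of the coordinate basis of $K^m$ under a $K$-linear splitting $B\to B/\Rad(B)$, together with a $K$-basis of $\Rad(B)$.

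Reading $\pol(A)$ off this basis, the condition that a tuple of coefficients represent an element of $A$ splits into three kinds of equations: (a) every $\Rad(B)$-coordinate vanishes, yielding $\lam=0$ of type~(i); (b) inside one component $A_i$, each coordinate attached to $\sigma_\ell$ with $\ell\ge 2$ is the $q^{u_{i,\ell}}$-th power of the coordinate attached to $\sigma_1$, yielding $\lam_{i,\ell}=\lam_{i,1}^{q^{u_{i,\ell}}}$ of type~(iii); and (c) the coordinate attached to $\sigma_1$ lies in $\sigma_1(A_i)\sub K$, which for a finite $A_i=F_{q_i}$ is cut out by $\lam_{i,1}^{q_i}=\lam_{i,1}$ of type~(ii) and for $A_i=K$ is vacuous. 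These finitely many Frobenius-type relations cut out $A$ set-theoretically, and since $A$ is \Zcd\ they generate $\pol(A)$.

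The main obstacle is the middle step: showing that the distinct embeddings $\sigma_\ell$ really are Frobenius conjugates and not wild field embeddings. This is where the \Zcd\ hypothesis pays off, forcing each $\sigma_\ell(A_i)\sub K$ to be a closed subfield (so that \Eref{fields}(i) applies), and turning the abstract $q$-polynomial statement of \Cref{addit} into the much sharper Frobenius-type form of \Dref{2.8}.
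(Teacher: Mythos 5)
The paper itself does not prove this theorem --- it is quoted verbatim from \cite[Theorem 4.15]{BRV1} --- so there is no in-paper argument to compare against, and I can only review your proposal on its own terms.

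Your overall strategy is sound: \Tref{zarcl1} gives $J=0$ since $A$ is semiprime, commutativity collapses every matrix block to a scalar, so $A = A_1\times\cdots\times A_k$ with each $A_i$ a closed subfield (finite or equal to $K$) by \Eref{fields}(i); then $A\cap\Rad(B)=0$ embeds $A$ into $B/\Rad(B)\cong K^m$, and the three relation classes you read off are exactly the three types in \Dref{2.8}. Two steps need substantially more care, though. First, the assertion that each embedding $\sigma_\ell\co A_i\hookrightarrow K$ must be a power of the $q$-Frobenius is the entire content of the theorem when $A_i = K$ (for $A_i$ finite it is just Galois theory), and ``must be a polynomial ring homomorphism'' is not an argument: wild automorphisms of $\overline{\F_p}$ abound, and the constraint here comes from Zariski closedness of the graph. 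You need to invoke the result cited just above \Cref{addit} (from \cite{KombMiyanMasayoshi}, \cite{Miyanishi}, \cite{Tits}) that an additively closed Zariski closed subvariety is cut out by $q$-polynomials, then use multiplicativity to kill all cross terms of an additive polynomial $\sum c_j x^{p^j}$ --- forcing a single surviving term $x^{p^{j_0}}$ with $c_{j_0}=1$ --- and finally use $\sigma_\ell\mid_F = \id$ to force $p^{j_0}$ to be a power of $q=|F|$ rather than merely of $p$. Second, your closing sentence is a non sequitur: Zariski closedness of $A$ says $A=V(\pol(A))$, which does \emph{not} imply that an arbitrary system of equations cutting out $A$ set-theoretically generates the full vanishing ideal $\pol(A)$. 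What saves you is that your explicit relations already generate a radical ideal (type~(iii) relations allow elimination by substitution, and each type~(ii) polynomial $\lambda^{q_i}-\lambda$ splits into distinct linear factors over $K$), so over the algebraically closed $K$ the Nullstellensatz gives equality with $\pol(A)$. That observation should be made explicit, since without it the final step fails.
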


This theorem is applicable to the center of $A$, since the center
of a \Zcd\ algebra is \Zcd, by \cite[Lemma 3.28]{BRV1}. The next
step is to use this description to build a suitable representation
of $A$. For this, we also recall (and slightly generalize) a definition from \cite{BRV1}.

\begin{defn}\label{Wbf}
Let $F \sub K$ be commutative Noetherian rings. Suppose $A$ is a
$K$-algebra  with an ideal $J_0 \sub \operatorname{Jac}(A)$ (the
radical of $A$), such that $$A/J_0 = A_1 \times \dots \times A_k
\cong \M[n_1](F_1) \times \dots \times \M[n_t](F_k),$$ for
subrings $F \subseteq F_u \subseteq K$.

We say that a representation $\rho\! : A\to \M[n](K)$ is in {\bf
Wedderburn block form} if, for suitable $s_1,\dots,s_k \geq 1$,
the diagonal of $\M[n](K)$ is contained in $s_1+\dots+s_k$ disjoint
diagonal blocks $A_u^{(1)}, \dots, A_u^{(s_u)}$ ($u = 1,\dots,
k$), satisfying the following properties:
\begin{itemize} \item
For each $u$, $A_u^{(i)}$ has size $n_u \times n_u$ for each~$i$,
and is isomorphic to $A_u$.
\item $n = \sum_u s_un_u;$
\item The given representation $\rho$ restricts to an embedding $\rho_u\! :
A_u \to A_u^{(1)}\times \dots \times A_u^{(s_u)}$;
\item The ensuing composite of $\rho_u$ with the projection onto the $i$
component yields an isomorphism $\rho_u^{(i)}\! : A_u \to A_u^{(i)}$ for each $i$;
\item $\rho $ embeds $J$ into the sum of the strictly upper triangular blocks (above the diagonal blocks).
\end{itemize}
For each $u$, the blocks $A_u^{(1)},\dots,A_u^{(s_u)}$ are said to
be {\bf diagonally glued}.
\end{defn}
We usually assume that $F$ is a field  whose   algebraic closure
is $K$, and in this case $J_0=\operatorname{Jac}(A)$.

Thus, each of $A_u^{(1)}, \dots, A_u^{(s_u)}$ has center
isomorphic to $F_u$. Using idempotents, when $A$ is Zariski closed
(with $F\subseteq K$ fields) we can apply Example~\ref{fields}  to
conclude that the $F_u$ are either finite or equal to $K$. If
$F_u$ is finite, we say that the corresponding blocks have {\bf
finite type}.

For example, suppose $F = \F_q$. Then $$A =
\left\{\Mat{\alpha}{0}{\gamma}{0}{\beta}{\delta}{0}{0}{\beta^q}:\quad
\alpha,\gamma,\delta\in K, \ \beta \in \F_{q^2} \right \}$$ is a
\Zcd\ $F$-subalgebra of $\M[3](K)$, defined by the polynomial
relations $\lam_{12} = \lam_{21} = \lam_{31} = \lam_{32} = 0$ and
$\lam_{33} - \lam_{22}^q = \lam_{33} - \lam_{33}^{q^2} = 0$. (The
last two polynomial relations formally imply the polynomial
relation $\lam_{22}-\lam_{22}^{q^2}= 0$.) In this case $A/J \isom
K\oplus \F_{q^2}$, where the second component, which is finite,
embeds via $\beta \mapsto (\beta,\beta^q)$, thus gluing
the~$e_{22}$ and $e_{33}$ entries.

Here is an easy but important special case.

\begin{rem}\label{semis1} Assume $A \cong A_1 \times \cdots \times A_k$ is semisimple;
i.e., $J = 0$. Then the only Wedderburn blocks are along the
diagonal. Any representation of $A$ restricts to a representation
of the center, whose polynomial relations have already been
described in~Theorem~\ref{comZar}. These polynomial relations
extend to the respective entries of the matrix algebra components,
so we conclude that all gluing is diagonal, and is either
identical gluing or Frobenius gluing.
\end{rem}

For arbitrary \Zcd\ algebras, we start with Remark \ref{semis1}, since the semisimple part is a subalgebra, but also may have
$J \ne 0,$ in which case we also need to contend  with blocks
above the diagonal. The following result from \cite[Theorem~5.14]{BRV1} describes the relations on the diagonal.

\begin{thm}\label{Frobglue}
Suppose $A \subseteq \M[n](K)$ is a \Zcd\ algebra, with
$$A/J = A_1 \times \cdots \times A_k,$$ a direct product of $\, k$
simple components. Then we can choose the matrix units of~
$\M[n](K)$ in such a way that $A$ has Wedderburn block form, and
all identifications among the diagonal blocks are Frobenius
gluing.
\end{thm}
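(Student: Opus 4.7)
The plan is to reduce the gluing analysis to the commutative case handled by \Tref{comZar}. First, apply \Tref{zarcl1} to obtain the Wedderburn decomposition $A = S \oplus J$ with $S \cong \prod_{u=1}^k A_u$ and $A_u \cong \M[n_u](F_u)$; by \Eref{fields}(i), each $F_u$ is either a finite subfield of $K$ or equals $K$ itself. By Lemma~\ref{nilrad} the radical $J$ is nilpotent, so the standard module $V = K^n$ admits an $A$-invariant composition series
\[ 0 = V_0 \subset V_1 \subset \cdots \subset V_m = V \]
on whose factors $J$ acts as zero. Each $V_i/V_{i-1}$ is then a simple $S$-module, hence a simple $A_u$-module for exactly one index $u$, and over the algebraically closed $K$ it is isomorphic to $K^{n_u}$ with $A_u$ acting via entry-wise application of some embedding $\sigma \co F_u \hra K$ determined by the central action of $F_u$ on the factor.

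Next, choose a $K$-basis of $V$ by lifting standard bases of each composition factor, so that for each factor of type $A_u$ the matrix units of $A_u$ act as standard matrix units on $K^{n_u}$. Because $J$ drops the filtration, in this basis $S$ is block-diagonal with blocks $A_u^{(\ell)}$ ($\ell=1,\dots,s_u$) of size $n_u \times n_u$ and $\sum_u s_u n_u = n$, while $J$ lies strictly above these blocks; this is the Wedderburn block form of \Dref{Wbf}. By construction each $\rho_u^{(\ell)}\co A_u \to A_u^{(\ell)}$ is entry-wise application of some embedding $\sigma_u^{(\ell)}\co F_u \hra K$, so the identification of $A_u^{(\ell)}$ with $A_u^{(\ell')}$ reduces on central scalars to $\tau = \sigma_u^{(\ell')} \circ (\sigma_u^{(\ell)})^{-1}$.

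Finally, I would analyze $\tau$ using \Tref{comZar}. The subalgebra $Z(S) = \prod_u F_u \sub A$ is commutative and semiprime and, by \cite[Lemma 3.28]{BRV1}, is itself \Zcd\ in its embedding in the diagonal torus of $\M[n](K)$. \Tref{comZar} then gives a finite set of $F$-Frobenius type defining relations (\Dref{2.8}) for this embedding. Relations of type (iii), $\lambda_j = \lambda_i^s$ with $s$ a $q$-power, encode precisely the identifications between the central scalar entries of blocks $A_u^{(\ell)}$ and $A_u^{(\ell')}$; hence $\tau$ must be a Frobenius power. When $F_u = K$ the only option is $s=1$, yielding identical gluing as the special case of Frobenius with trivial twist; when $F_u$ is finite, both $\sigma_u^{(\ell)}$ and $\sigma_u^{(\ell')}$ land in the unique subfield of $K$ of cardinality $|F_u|$, whose automorphism group is cyclic generated by Frobenius.

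The main obstacle is the coordinated choice of basis achieving both halves of the Wedderburn block form: $S$ block-diagonal \emph{and} $J$ strictly above. This is resolved by the nilpotence of $J$ forcing each composition factor of $V$ to be a simple $S$-module, so a basis compatible with the composition filtration automatically diagonalizes $S$. A secondary subtlety is reducing the full-block gluing analysis to central scalars; this is legitimate because any $F$-algebra isomorphism between blocks of the form $\M[n_u](\sigma(F_u))$ is, after rechoice of matrix units within each block, determined by its restriction to the central scalars, so \Tref{comZar} applied to $Z(S)$ captures all of the diagonal gluing.
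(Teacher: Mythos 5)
The paper quotes this theorem from \cite[Theorem~5.14]{BRV1} without proof; the only in-text guidance is Remark~\ref{semis1}, which sketches the semisimple case by restricting to the center and invoking Theorem~\ref{comZar}. Your argument follows precisely that line, extended to the general case by a standard unitriangularization over $K$: the nilpotence of $J$ gives an $A$-invariant composition series of $K^n$ on whose factors $J$ dies, block-diagonalizing $S$; each block acts through some embedding $\sigma_u^{(\ell)}\co F_u\hra K$; Skolem--Noether (rechoice of matrix units within a block) reduces the block identification to the central scalar level; and Theorem~\ref{comZar} applied to $Z(S)\cong\prod F_u$ shows the inter-block relations are of $F$-Frobenius type, hence the composites $\sigma_u^{(\ell')}\circ(\sigma_u^{(\ell)})^{-1}$ are Frobenius powers. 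This is correct, and matches the approach the paper's remarks indicate, with the one small caution that ``simple $S$-module'' should be read as ``$K$-irreducible $S$-module'' (equivalently simple $KS$-module): the factor $V_i/V_{i-1}$ is a $K$-vector space on which $A_u$ acts through $KA_u\cong\M[n_u](K)$, not an abstractly simple $A_u$-module, which is exactly what makes the embedding $\sigma_u^{(\ell)}$ (and hence the Frobenius twist) a nontrivial invariant rather than forcing $F_u=K$.
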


Recalling the linear closure $B = KA$, we write the diagonal
components of $B$ as $B_1, B_2, \dots, B_\ell$ (taken in
$M_n(K)$). Letting $e_r$ denote the unit element for the component
$B_r,$ for $1 \le r \le \ell,$ we have $B_r = e_r B e_r;$ we also
write $B_{r,r'}$ for $e_r B e_{r'}$. (Thus, $B_{r,r} = B_{r}$.)
Then
$$B = \bigoplus \limits _{r,r'=1}^k B_{r,r'}.$$ We carry this
notation throughout.

 The relations above the diagonal are subtler. It is particularly useful to describe the
Wedderburn block form in terms of polynomial relations. Taking the
matrix units as a base for $ \M[n](K)$, the relations module
$\pol(A)$ of $A$ in Wedderburn block form can be decomposed as a
direct sum, in accordance with the decomposition $A = S \oplus J$.
Thus $A$ is defined by three classes of relations on the
components $A_{r,r'} = A \cap B_{r,r'}$: (Here we write $A_r$ for
the block $A_{r,r};$ this would correspond to some $A_u^{(i)}$ in
the notation of Definition~\ref{Wbf}.)

\begin{enumerate}
\item\label{C1} $\lam_{ij} = 0$ for $i>j$ (which says that all entries below the
diagonal blocks are 0).
\item\label{C2} Relations of the form $\lam_{ij} =
\lam_{i'j'}^{q^t}$, where $(i,j)$ and $(i',j')$ are in the same
relative  position in glued blocks; we call this {\bf Frobenius
gluing} of {\bf exponent} $ t$. When diagonal blocks $A_r$ and
$A_{r'}$ have Frobenius gluing of exponent $t$, we write
$t=\exp(A_{r,r'})$, and call it the {\bf relative Frobenius
exponent}. We permit $t=0;$ in fact we must have $t=0$ when the
base field $F$ is infinite. For $F$ finite, $\exp(A_{r,r'})$ is
only well defined modulo the dimension of $F_r = F_{r'}$ over $F$
\item\label{C3} Other relations  link components above the
diagonal blocks by means of $q$-polynomials.
\end{enumerate}

\begin{note1}
Occasionally, we want to deal with algebras without 1, so let us
consider this case briefly. Any finite dimensional algebra $B$
without $1$ has a maximal set of orthogonal primitive idempotents
$\{e_1, \dots, e_k\}$, whose sum we denote as $e$. We formally
define the operator $e_0$ by $e_0 b = b - eb$ and $be_0 = b - be$.
Clearly the operator $e_0$ is idempotent, and $e_0 b = 0$ iff $b
\in eR$.
$B_{0,0} = e_0Be_0$ must be nilpotent, since otherwise it would
contain a nonzero idempotent of $B$, contrary to hypothesis.
Strictly speaking, $B_0$ may be composed of several nilpotent
blocks, as in the example $B = \set{\(\begin{array}{cccc}
0 & *& 0 & 0\\
0 & 0 & 0 & 0\\
0 & 0 &0 & *\\
0 & 0 & 0 & 0
\end{array}\)}.$

Using this Peirce decomposition, one can obtain a corresponding
Wedderburn block decomposition, but where one of the diagonal
blocks is $B_{0,0}$. The zero block can have arbitrary dimension.

For example, when $B$ is nilpotent,  the idempotent $e = 0$ and
$e_0$ is the identity operator and $e_0 Be_0 = B$. For $B =
\set{\(\begin{array}{cccc}
0 & 0 & *\\
0 & 0 & *\\
0 & 0 & *
\end{array}\)},$ the block $B_0$ is a $2 \times 2$ zero block.
\end{note1}

We would like to focus on one particular kind of gluing of
non-diagonal blocks.
\begin{defn}\label{propFb} {\bf Proportional Frobenius gluing} is the situation
in which, for suitable powers $q,q'$ of $p$, the $q$ power of the
entries in $B_{r',s'}$ each are $\nu$ times the $q'$ power of the
respective entries in $B_{r,s}$. When $q,q'=1,$ this is merely
called {\bf proportional gluing}.  When moreover $\nu =1$, we say
that the component has {\bf identical gluing}.\end{defn}

Note that proportional Frobenius gluing is transitive. Other
relations of class \eq{C3} often are far more complicated to
describe than those of class \eq{C1} and~\eq{C2}. Fortunately, we
may bypass them, as we shall see.

\begin{defn}
We define the {\bf glue equivalence relation} on the diagonal
blocks, by saying $B_{r}\sim B_{r'}$ when these two diagonal
blocks are glued. Thus, for $k$ as in~Definition~\ref{Wbf}, there
are $k$ equivalence classes of glued diagonal blocks, which we
call {\bf glued components}.
\end{defn}

\begin{rem}\label{offdiag} The relative Frobenius exponents are used to define equivalence
relations on vectors of glued indices, as follows: Let $T_u$
denote the set of those indices $r \in \{ 1, \dots, n\}$ such that
the diagonal block $B_r$ belongs to the $u$ glued component.
 For every $1\leq r,s \leq k$, we let
$T_{u,v} = \set{(r,s) \in T_u \times T_v \suchthat r\leq s}$, and
define an equivalence relation on $T_{u,v}$ by setting $(r,s) \sim
(r',s')$ iff $\exp(B_{r,r'}) \equiv \exp(B_{s,s'})$ modulo
$\gcd(t_r,t_s)$, where $t_r$ is the dimension of $\cent (B_{r})$.

(Formally we have a partial matrix $(\ell_{rs})$, defined when
$B_r$ and $B_s$ are glued, such that $\ell_{rr} = \ell_{ss}$ is
the dimension of $F_r$ over $F$, and $\ell_{rt} =
\ell_{rs}+\ell_{st} \pmod{\ell_{rr}\Z}$ if $B_r$, $B_s$ and $B_t$
are glued.)

\end{rem}

\subsection[The role of Peirce decompositions]{The role of Peirce decompositions}

\begin{rem}\label{Peirce0} One of the primary tools in the analysis of a \Zcd\ algebra $A$ is
the Peirce decomposition, which we use here at three levels.

\begin{itemize}
\item We start with the Peirce decomposition of $A$. The gluing
equivalence does not recapture the Wedderburn principal
decomposition of Theorem \ref{zarcl1}; in Example \ref{fields},
all the $e_r$ are glued, and their sum is the unit element $ 1.$
In general, a local algebra has no idempotents, so we cannot
extract its semisimple part from its Peirce decomposition!

\item The next refinement is the Peirce decomposition of the
linear closure $B = KA.$ We call this the \textbf{sub-Peirce
decomposition} of $A$. Note that non-identity Frobenius gluing in
$A$ becomes ``unglued'' in $B$, when viewed as an algebra over the
infinite field $K$.

\item Finally,  each idempotent $e$ of $B$ corresponds to a glued
component, and we can write $e$ as a sum of unit elements of the
glued blocks of this component. This is the finest Peirce
decomposition with which we work.
\end{itemize}
\end{rem}

(Other decompositions which come up, especially in the proof of
Theorem~\ref{propglu11}, arise from Frobenius gluing, and from
algebraic group decompositions.)

\begin{rem}\label{decomp} The Peirce decomposition of an algebra $A$ can be viewed
structurally, in terms of decomposing $A = \sum Ae_i$ as a direct
sum of projective modules; then $$A \approx \End _A A = \End _A
\left(\sum Ae_i\right) \cong \bigoplus \Hom _A (Ae_i, Ae_j) \cong
\bigoplus e_i A e_j.$$ Since $A \subset B \subset M_n(K)$ in
Remark~\ref{Peirce0}, this observation can be applied to $A$ as a
subalgebra of any of these three algebras, and  $A \cap \Hom
(M_n(K)e_i, M_n(K)e_j)$ is contained in the direct sum of suitable
Wedderburn blocks. Note that each Wedderburn block is an
$M_{n_i}(F_i)$-$M_{n_j}(F_j)$-bimodule in general.
\end{rem}

\begin{exmpl}\label{zarcl2}  We can recover the decomposition $A = S \oplus J$
of Theorem~\ref{zarcl1} from the intersection of $A$ with the
Wedderburn block components of $B = KA$. In other words, matching
components yields $$S = (\bigoplus e_r B e_r) \cap A = (\bigoplus
e_r A e_r) \cap A;$$
$$J = (\bigoplus _{r < s} e_r B e_s) \cap A =
(\bigoplus _{r < s} e_r A e_s) \cap A.$$
\end{exmpl}

\section{Full quivers of representations}\label{quivers2}

We now turn to our main objective, which is to describe
representations of algebras in terms of quivers (i.e., directed
graphs). First we review the classical theory of quivers; an
accessible reference is \cite{ASS}. Then we introduce our more
detailed version, called the {\bf full quiver of a
representation}, whose description relies on Frobenius gluing, and
give some examples of how one builds the full quiver from a
specific algebra.

In the subsequent sections we study full quivers in increasing
complexity, starting with single vertices and then single arrows
(which we call {\it elementary quivers}) and next {\it branches},
with the aim of interpreting properties of \Zcd\ algebras in terms
of their full quivers of representations. In the subsequent paper,
\cite{BRV3}, we distinguish among different kinds of full quivers
by means of PIs.

\subsection{Review of classical quivers}

\begin{defn}\label{quiv}
Suppose $\{ e_1, \dots, e_t\}$ is a 1-sum set of orthogonal
primitive idempotents of a finite dimensional algebra $A$ over a
field $F$. The {\bf (classical) quiver} of $A$ is the graph whose
vertices are $\{ e_1, \dots, e_t\}$ and whose arrows from $e_i$ to
$e_j$ have
multiplicities $\dimcol{e_i\Rad(A)e_j}{F}$.
\end{defn}

The quiver is independent of the choice of $\{ e_1, \dots, e_t\}$,
in view of the well-known identification of $eAe$ with $\Hom_A(Ae,
Ae)$ and the Krull-Schmidt Theorem.

\begin{exmpl} (i) $A = F[x]/\ideal{x^2}$ The quiver is
the loop:

\begin{equation}\label{loop}
\xymatrix@C=40pt{
\xy
(0,0)?(0)*{\bullet};(0,0) **\crv{(-10,5) & (-10,-5)}?(0.99)*\dir{>};
\endxy
}
\end{equation}
\smallskip

(ii) $A = F[x]/\ideal{x^3}$. The quiver is the double loop:

\begin{equation}
\xymatrix{ \xy
(10,0);(10,0) **\crv{(0,5) & (0,-5)}?(0.99)*\dir{>};
(10,0);(10,0) **\crv{(-4,10) & (-4,-10)}?(0.99)*\dir{>};
\endxy
}\!\!\bullet
\end{equation}

(iii) $A = \set{\(\begin{array}{cccc}
* & * \\
0 & *
\end{array}\)} \subset \M[2](F)$. The quiver is the graph comprised of a single
arrow connecting two vertices:
\begin{equation*}
\xymatrix{ \xy
(0,0)?(0)*{\bullet};(10,0) **\crv{-}?(0.96)*\dir{>} ?(1)*{\bullet};
\endxy
}
\end{equation*}
\end{exmpl}

When considering quivers, one usually passes to $A/J^2$, where $J$
is the radical of~$A$; then there is the celebrated correspondence
between indecomposable modules and Dynkin diagrams, \cite {BGM},
\cite {Ga},
 \cite {Kac1}, \cite {Kac2}. Of course, in passing to
$A/J^2$ one forfeits much of the combinatoric structure involving
the radical.

Also, one customarily considers the ``basic algebra''
$\operatorname{End}_A P$ Morita equivalent to $A$, for a suitable
projective module $P$. These reduced classical quivers, although a
powerful tool in much of representation theory, do not help us
very much in combinatoric questions such as determining PIs of an
algebra.

\subsection{Definition of the full quiver}

Having established the notion of \Zcd\ algebras, we would like to
study their structure through the techniques of representations of
finite dimensional algebras, by refining the notion of quiver from
Definition~\ref{quiv}. Since we want to study \Zcd\ algebras, and
these are defined in terms of representations, we need our notion
of quiver to be compatible with the particular representation.

Since \Zcd\ algebras are semilocal, i.e., $S = A/J$ is semisimple
and $J$ is nilpotent, let us begin in the somewhat greater
generality of semilocal rings. Writing $S = \prod _{i=1}^k
\M[n_i](D_i),$ where $D_i$ are division rings with multiplicative
unit $\bar e_i$, we can lift the orthogonal idempotents $\bar e_1,
\dots, \bar e_k$ to a set of orthogonal idempotents $e_1, \dots,
e_k$ of~$A$ whose sum is 1.

The fact that $A = S \oplus J$ enables us to start with the quiver
of $S$ as a sub-quiver of that of $A$. We treat these parts in
different ways. The loops of the classical quiver could be viewed
as ring injections inside $S$, which in turn could be broken down
into homomorphisms $D_i \to D_{i'},$ and the arrows that are not
loops are separated to arrows each of multiplicity 1.

Unfortunately, it can be quite difficult to describe all
homomorphisms between two given division rings. But in the case
under consideration, of a Zariski closed $F$-subalgebra~$A$ of
$\M[n](K)$, the center of each $D_i$ is either a finite field or
an algebraically closed field, so each $D_i$ is a field. Thus, our
injections are either the identity map or are given by powers of
Frobenius homomorphisms $a \mapsto a^p$. In light of \Dref{Wbf},
which involves partitioning the blocks into glued components, our
representation is determined by the following:

\begin{defn}\label{quivers}
Suppose we are given a representation $\rho \co A \to \M[n](K)$,
where $A$ is Zariski closed and $\rho(A)$ is in Wedderburn block
form, with the blocks $B_1,\dots,B_\ell$ along the diagonal.

\begin{enumerate}
\item The {\bf full quiver of the representation} $\rho$ (of $A$)
is a directed graph on $k$ vertices corresponding to the diagonal
blocks $B_r$, $1 \le r \le \ell$. Each vertex is labelled with a
roman numeral {\bf i} (taking the values $I$, $\II$ etc.); vertices corresponding to glued
diagonal blocks are labelled with the same roman numeral. The
label $\,\bullet\,$ denotes a vertex which is not glued. In an
algebra without unit, the label $\,\circ\,$ indicates a zero
diagonal
block.
\item Each vertex is assigned a pair of subscripts $(n_r,t_r)$,
written as $\comp{n_r}{t_r}$, where $\card{F_r} = \card{F}^{t_r}$.
The first subscript $n_r$ is the degree of the corresponding
matrix algebra $B_r = \M[n_r](F_r)$, and is called the {\bf matrix
degree} of the vertex; $\sum_{r=1}^{k} n_r = n$. We may suppress
the first subscript when $n_r = 1$.
\item When $F_r = \cent(B_r)$ is a finite field, the second
subscript $t_r$ denotes that the cardinality of $F_r$ is
$\card{F}^{t_r}$; i.e., $[F_r:F] = t_r$. In this case the vertex
is called {\bf finite}. Otherwise, the second subscript is left
out, which indicates that   $F_r = K$; then the vertex is called
{\bf infinite}. (When $A$ is Zariski closed, $F_r$ is either
finite or equal to $K$ as explained above).

We simplify the notation a bit, by means of the observation that
glued diagonal blocks must have the same pairs of subscripts, so
we only notate this for the first block in a given glued diagonal
component.

\item When $F$ is a finite field, say of order $q$, superscripts
$u \le t$ indicate the power $\phi^u$ of the Frobenius
homomorphism $\phi \co x \mapsto x^q$; the notation $I^{(u)}$ and
$I^{(\ell')}$ indicates blocks belonging to the same glued
component, with the identification given by $\phi^{u' - u}$.
The index $u' - u$ is well-defined modulo the corresponding
dimension $t_r$ of $F_r$ over $F$. Absence of superscripts should
be read as $u = u'= 0$; namely, identity gluing.
\item The full quiver has an arrow (i.e., a directed edge) from {\bf i}$_r$ to
{\bf i}$_s$ iff $e_r\rho(A) e_s\ne 0$, where $e_r,e_s$ are the idempotents corresponding to the vertices. Loops are omitted. Arrows with
identical gluing are labelled with the same Greek letter.

\item\label{fb} Proportional off-diagonal gluing  (in other words,
the entries in $B_{r',s'}$ each are $\nu$ times the respective
entries in $B_{r,s}$) is indicated by writing some Greek letter
(say $\a$) for the arrow from the $r$ vertex to the $s$ vertex,
and writing $\nu \a$ for the arrow from the $r'$ vertex to the
$s'$ vertex.

Proportional Frobenius gluing is indicated by writing
$\a^{q^{u}}$ for the arrow from the $r$ vertex to the $s$
vertex, and writing $\nu \a^{q^{u'}}$ for the arrow from the
$r'$ vertex to the $s'$ vertex. (We do not write $\nu$ when $\nu =
1$.)
\end{enumerate}
\end{defn}

Since the representation is upper diagonal, there can be no
cycles.  In \Tref{propglu} we show when $F$ is infinite that the
representation can be chosen so that its full quiver has only
proportional gluing.

\begin{rem}
Conversely, there is an algebra (provided with a representation) corresponding to
any full quiver, which can be constructed using path algebras.
\end{rem}

\begin{rem}\label{3.7}
Any gluing applies in the same way to each entry in the matrix
components (and thus corresponds to a direct sum of projective
modules). Thus, shrinking the size of all blocks in a glued
component to $1$
yields a Morita equivalent algebra.
\end{rem}

\begin{rem}\label{irrc}
For any algebra $A = A_1 \times \cdots \times A_t$, taking the
representation of $A$ that is the direct sum of the
representations of the $A_i,$ we see that the full quiver of this
representation is the disjoint union of the full quivers of the
representations of the $A_i$.

We call an algebra \textbf{indecomposable} if it cannot be written
as a direct sum of proper subalgebras. Thus, when there is no
gluing, the full quiver of a representation is connected (as an
undirected graph) iff the corresponding algebra is indecomposable.

We say that two connected components of the full quiver are {\bf glued} if they have some glued vertices. For example, the full quiver
$$I \ra \II, \ \II \ra \III$$
is disconnected, but its components are glued by the vertices
labeled $\II$. Extending this relation via transitivity, we see
that any representation factors into a direct product of its
glue-connected components; consequently, we focus on
glue-connected full quivers.
\end{rem}

\begin{exmpl}
An algebra may have different representations in Wedderburn block form, of the same dimension, which are not conjugate to each other. At times, this phenomenon is related to arrows between glued vertices, as illustrated by the two representations below.
\begin{equation*}
\xymatrix@C=32pt@R=2pt{
\comp[I]{1}{K} \ar[r] \ar[rdd] & \comp[\II]{1}{K}
\\
{} & {} & {}
\\
\comp[I]{1}{K} & \comp[\II]{1}{K} }
\begin{array}{c} {\ } \\ {\ } \\ \mbox{{;}} \end{array}
\qquad \qquad
\xymatrix@C=40pt@R=4pt{
\comp[I]{1}{K} \ar[r] & \comp[\II]{1}{K}
\\
{} & {} & {}
\\
\comp[I]{1}{K} \ar[r] & \comp[\II]{1}{K} }
\end{equation*}
\end{exmpl}

\begin{prop} If $\Gamma$ is the full quiver of a representation of an
algebra $A$, then the full quiver $\Gamma'$ obtained by reversing
all arrows in $\Gamma$ is the full quiver of the representation of
the opposite algebra ${A}^{\operatorname{op}}$.
\end{prop}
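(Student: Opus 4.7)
The plan is to exhibit an explicit representation of $A^{\operatorname{op}}$ whose full quiver is $\Gamma'$, using the transpose map on $\M[n](K)$ composed with a permutation that restores upper triangularity. Starting with the given representation $\rho\co A\to \M[n](K)$ in Wedderburn block form computing $\Gamma$, I would define $\rho'\co A^{\operatorname{op}} \to \M[n](K)$ by $\rho'(a) = P\rho(a)^T P^{-1}$, where $P$ is the permutation matrix that reverses the order of the diagonal blocks $B_1,\dots,B_\ell$. Since $(ab)^T = b^T a^T$, the transpose is an anti-homomorphism $A\to \M[n](K)$, so $\rho'$ is a genuine $F$-algebra homomorphism out of $A^{\operatorname{op}}$, and conjugation by $P$ preserves this. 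The matrix $\rho(A)^T$ is lower block triangular with blocks $B_r^T$ on the diagonal; conjugating by the reversing permutation turns this into upper block triangular form, so $\rho'$ is a representation in Wedderburn block form.

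Next I would verify that the full quiver of $\rho'$ coincides with $\Gamma'$ by checking that each item in \Dref{quivers} transports correctly. The diagonal blocks of $\rho'(A^{\operatorname{op}})$ are just the diagonal blocks of $\rho(A)$ transposed and reindexed by $r \mapsto \ell - r + 1$; since each $B_r = \M[n_r](F_r)$ is closed under transposition as a subset of $\M[n_r](K)$, the labels $(n_r,t_r)$ of each vertex are unchanged. Frobenius diagonal gluing is preserved because transposition commutes entry-wise with the Frobenius: if entries of $B_{r'}$ equal the $\phi^u$-images of the corresponding entries of $B_r$, then entries of $B_{r'}^T$ equal the $\phi^u$-images of those of $B_r^T$. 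Hence the roman-numeral labels and Frobenius superscripts are identical.

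For arrows, an off-diagonal block $B_{r,s}$ with $r < s$ is sent by $a\mapsto a^T$ to the $(s,r)$-block, and then by conjugation by $P$ to the $(\ell-s+1,\ell-r+1)$-block, which sits above the diagonal; this is precisely the reversal in $\Gamma'$ of the arrow from vertex $r$ to vertex $s$. Proportional and proportional-Frobenius gluing of two off-diagonal blocks $B_{r,s}$ and $B_{r',s'}$ — of the form ``entries of $B_{r',s'}$ equal $\nu$ times the $\phi^{u'-u}$-image of the corresponding entries of $B_{r,s}$'' — transfer to the same relation between the transposed/permuted blocks by the same entry-wise argument, so the off-diagonal gluing labels (Greek letters and scalar factors $\nu$) carry over verbatim.

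The main obstacle, and really the only nontrivial point, is the bookkeeping: one must be careful that the index reversal induced by $P$ is compatible with the block structure (so that the matrix degrees $n_r$ at vertices which are reindexed still correctly partition $n$), and that the identifications among entries within a glued component (which are prescribed relative to the positions $(i,j)$ within a block) are preserved when one replaces $(i,j)$ by $(j,i)$ on both sides of a gluing equation. Once this bookkeeping is checked, the full quiver of $\rho'$ is a directed graph on the same vertex set as $\Gamma$, with the same vertex and gluing labels, and with every arrow reversed — which is by definition $\Gamma'$.
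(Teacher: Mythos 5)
Your proof is essentially the paper's proof, worked out in more detail: the paper also takes the transpose of the representation (leaving it in lower block triangular form and reading the reversed arrows there), while you additionally conjugate by the block-reversing permutation $P$ to return to upper triangular Wedderburn block form, and then verify the gluing data item by item. Same idea, more explicit bookkeeping.
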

\begin{proof}
$\Gamma'$ could be viewed as the full quiver of the transpose of
the representation (where we permit blocks beneath the diagonal,
rather than above it), and this is a transpose representation of
$A$, which is isomorphic to the opposite algebra of $A$.
\end{proof}

\subsection{Primitive and imprimitive arrows}

We streamline the full quiver a bit, to ease the notation.

\begin{defn}
An arrow ${\bf i}\ra {\bf j}$ is called {\bf primitive} if there
is no $\bf k$ such that there exist arrows ${\bf i} \ra {\bf k}$
and ${\bf k} \ra {\bf j}$. We erase all non-primitive arrows
involved in unglued vertices, since these are formal consequences
of the primitive arrows, corresponding to the product of the
corresponding respective elements of the algebra. Thus, we write
$$I \to \II \to \III \qquad \mbox{or} \qquad \bullet \to\bullet \to \bullet$$
for the full quiver of the algebra of upper triangular $3 \times
3$ matrices, although technically there should be an arrow from
$I$ to $\III$. (Gluing of arrows can upset this reasoning, as
indicated in Example~\ref{gluerad} below; therefore glued
non-primitive arrows will not be erased).
\end{defn}

\begin{exmpl}\label{ex0}
\begin{enumerate}
\item\label{EX1}
The algebra
$$A = \set{\(\begin{array}{cccc}
\alpha & * & * & *\\
0 & \beta & \gamma &* \\
0 & 0 & \alpha &* \\
0&0&0& \beta ^q
\end{array}\) \suchthat \alpha \in
\M[2](\F_{q^3}),\ \beta \in K,\ \gamma \in \M[1,2](K)}$$ over $F =
\F_q$, where $K \supseteq \F_{q^3},$ has the following full
quiver:

\vskip 0.3cm
\begin{equation}\label{ex8}
\xymatrix@C=40pt{
\comp[I]{2}{3} \ar@/^0pt/[r] \ar@/^16pt/[rr] \ar@/^26pt/[rrr]
&
\compfull[\II]{1}{K}\ar@/_12pt/[rr] \ar@/^0pt/[r]|(0.5){\gamma }
&
\comp[I]{2}{3} \ar@/^0pt/[r]
&
\II^{(1)} }
\end{equation}
\vskip 0.6cm
or just the path
\begin{equation}\label{ex8-1}
\xymatrix@C=40pt{
\comp[I]{2}{3} \ar@/^0pt/[r]
&
\comp[\II]{1}{K} \ar@/^0pt/[r]
&
I \ar@/^0pt/[r]
&
\II^{(1)} },
\end{equation}
in view of our short-hand conventions and erasing the
non-primitive arrows. The representation written out in full, in
$\M[6](K)$, is
$$\set{ \(\begin{array}{cccccc}
\alpha _{11} & \alpha _{12} & * & * & * &*\\
\alpha _{21} & \alpha _{22} & * & * & * &*\\ 0 &
0 & \beta & \gamma_1 & \gamma _2 &* \\
0 & 0 & 0&\alpha _{11} & \alpha _{12} & * \\
0 & 0 & 0&\alpha _{11} & \alpha _{12} & * \\
0&0&0&0 &0& \beta ^q
\end{array}\) \suchthat \alpha_{i,j}, \beta, \gamma _i \in K}.$$

\item In contrast, the full quiver
\begin{equation}\label{ex6}
\xymatrix@C=40pt{
\comp[I]{2}{3} \ar@/^0pt/[r] \ar@/^12pt/[rr]
&
\II \ar@/_12pt/[rr]
&
I \ar@/^0pt/[r]
&
\II^{(1)} } \\
\end{equation}
represents the algebra having all the relations of $A$, as well as
the extra relation $\gamma_1 = \gamma_2 = 0$, namely $$
\set{\(\begin{array}{cccccc}
\alpha _{11} & \alpha _{12} & * & * & * &*\\
\alpha _{21} & \alpha _{22} & * & * & * &*\\ 0 &
0 & \beta & 0 & 0 &* \\
0 & 0 & 0&\alpha _{11} & \alpha _{12} & * \\
0 & 0 & 0&\alpha _{11} & \alpha _{12} & * \\
0&0&0&0 &0& \beta ^q
\end{array}\) \suchthat \alpha_{i,j}, \beta \in K}.$$
(This is seen most easily by erasing the middle straight arrow in
\eq{ex8}; now two of the curved arrows become primitive, so must
be retained on notating the full quiver.)
\end{enumerate}
\end{exmpl}

\subsection{Proportional Frobenius gluing (of arrows)}

Like vertices, arrows correspond to matrix blocks defined over
$K$, and are said to be of \textbf{finite type} if they can be
defined over a finite subfield of $K$; otherwise they are said to
be of \textbf{infinite type}. As with vertices, the complications
in gluing arise for arrows of finite type.

\begin{prop}\label{p2p}
If $\Gamma$ is the full quiver of a representation of an algebra
$A$, and if two primitive arrows in $\Gamma$ are proportionally
Frobenius glued with respect to $q^{\ell},q^{\ell'}$ in
\Dref{quivers}.\eq{fb}, then their respective vertices are glued
with the same Frobenius twist.
\end{prop}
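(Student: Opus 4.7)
The plan is to derive the claimed congruence of Frobenius twists by computing one off-diagonal block of a product $az\in A$ in two different ways — first by direct matrix multiplication using the vertex Frobenius twist, and then by invoking the arrow Frobenius twist — and comparing. Primitivity of the arrows will ensure that the identity extracted is a direct compatibility and not a derived consequence of gluings of shorter arrows.

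Fix the two primitive arrows $\beta\co B_r\to B_s$ labelled $\alpha^{q^\ell}$ and $\beta'\co B_{r'}\to B_{s'}$ labelled $\nu\alpha^{q^{\ell'}}$, proportionally Frobenius glued. Arrow gluing refines vertex gluing, so $B_r\sim B_{r'}$ and $B_s\sim B_{s'}$; by \Tref{Frobglue} these two diagonal gluings are Frobenius, with some twists $q^v$ and $q^{v'}$. Concretely, every $x\in A$ satisfies $x_{r'}=\phi^v(x_r)$, $x_{s'}=\phi^{v'}(x_s)$, and $x_{r',s'}=\nu\,\phi^{\ell'-\ell}(x_{r,s})$ on the arrow block. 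The conclusion ``same Frobenius twist'' is the pair of congruences $v\equiv\ell'-\ell\pmod{t_r}$ and $v'\equiv\ell'-\ell\pmod{t_s}$.

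To obtain the first, I would choose $a\in A$ whose $(r,r)$ block $a_r$ is an arbitrary central scalar in $F_r$ (and whose only other nonzero block is $a_{r'}=\phi^v(a_r)$ forced by the vertex gluing), and $z\in A$ with $z_{r,s}$ an independent generic element of the primitive radical block $e_rAe_s$ (and $z_{r',s'}=\nu\phi^{\ell'-\ell}(z_{r,s})$ forced by the arrow gluing). Direct matrix multiplication yields $(az)_{r',s'}=a_{r'}\cdot z_{r',s'}=\phi^v(a_r)\cdot z_{r',s'}$, since the diagonality of $a$ kills all cross-terms. Independently, the arrow-gluing identity applied to the element $az\in A$ gives $(az)_{r',s'}=\nu\,\phi^{\ell'-\ell}\bigl((az)_{r,s}\bigr)=\nu\phi^{\ell'-\ell}(a_r)\cdot\phi^{\ell'-\ell}(z_{r,s})=\phi^{\ell'-\ell}(a_r)\cdot z_{r',s'}$. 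Equating and using that $z_{r',s'}$ is a nonzero matrix acted on freely by the central scalars of $F_r$, the scalar $\phi^v(a_r)-\phi^{\ell'-\ell}(a_r)$ must vanish for every $a_r\in F_r$, whence $v\equiv\ell'-\ell\pmod{t_r}$. A mirror argument, multiplying $z$ on the right by a purely diagonal element supported on the $(s,s)$ and $(s',s')$ blocks, produces $v'\equiv\ell'-\ell\pmod{t_s}$.

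The main obstacle is justifying the existence of the generator $z$ with the required independent primitive block entry: a~priori, if the arrow $\beta$ were non-primitive, then $e_rAe_s$ might be generated entirely by products $e_rAe_k\cdot e_kAe_s$ coming from shorter arrows, and the Frobenius twist on $B_{r,s}$ would then be a forced composite of vertex- and arrow-twists attached to the intermediate component $B_k$ rather than the single number $\ell'-\ell$. Primitivity is precisely what rules this out, guaranteeing that the radical block contributes a genuine independent degree of freedom rather than a forced product, and hence that the comparison above produces the clean equality $\phi^v=\phi^{\ell'-\ell}$ on $F_r$. Existence of the purely diagonal element $a\in A$ with arbitrary $(r,r)$ scalar entry follows from the Wedderburn decomposition $A=S\oplus J$ of \Tref{zarcl1}; and the scalar cancellation in the last step is valid because $z_{r',s'}$ lies in a $K$-vector space on which $F_r$ acts freely, so only the zero scalar annihilates it.
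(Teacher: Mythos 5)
Your proof is correct and is essentially the same ungluing argument the paper gives: multiply the generic radical element on the left by a diagonal element whose $r$-block is a free scalar in $F_r$, and use the fact that the product must still lie in $A$ (hence satisfy the arrow-gluing relation) to force the diagonal twist $\phi^v$ to agree with the arrow twist $\phi^{\ell'-\ell}$, with the mirror argument handling the terminal vertices. The paper packages this as forming $\hat a a - a^2$ and observing that one glued off-diagonal entry vanishes while the other does not unless the twists match, whereas you keep the diagonal element and the radical element separate and compare $(az)_{r',s'}$ computed two ways; these are the same computation organized slightly differently, and your version makes the Frobenius exponents and the resulting congruences modulo $t_r$, $t_s$ more explicit than the paper's rather terse proof.
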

\begin{proof}
Suppose that for some constant $\nu \in K$ we have $b_{r',s'} =
\nu b_{r'',s''}$ for every $a= \sum _{r,s} b_{r,s}\in A$, where
$b_{r,s} \in B_{r,s}$. Since the diagonal blocks of the
representation each are simple algebras, we may assume that
$b_{r,r} \in F_r$. If the algebra $A$ contains an element $\hat a
$
 whose diagonal component is $\sum \hat b _r e_{r}$,
again assuming that $\hat b_r \in F_r$, we can normalize to assume
$b_{r',r'} = \hat b_{r'}$. But $A$ also contains $\hat a a = \sum
_{r,s} {\hat b}_r b_{r,s}$, so taking $ \hat a a - a^2$, we note
that the $(r',s')$ coefficient is 0 whereas the $(r'',s'')$
coefficient is nonzero, and we have ``unglued'' the $r'',s''$
block from the $r',s'$ block unless $\hat b_{r''} =  b_{r'}$.

 The same
argument holds for the second entry, when we fix $r$ and let $s$
vary.
\end{proof}

\begin{rem}\label{unglu} The same argument of ``ungluing'' yields somewhat more precise information. Namely, notation as  in Remark~\ref{offdiag}, shows
that the block corresponding to the primitive arrow must belong to
a field extension $L$ of the field of order $q ^{\gcd(t_r,t_s)}$,
hence $|L| = q ^{c \cdot \gcd(t_r,t_s)}$ for some $c$ or $L=K$ is
infinite.

 Furthermore, the
Frobenius twist of the arrows must equal $q^{(\ell'- \ell) +
m\cdot{\gcd(t_r,t_s)}}$ for some~$m$, since otherwise multiplying
by a diagonal component would ``unglue'' the arrows.
\end{rem}

\subsection{Synopsis of the full quiver}

To summarize, the full quiver of $A \sub \M[n](K)$ is a graph
without cycles which displays the following information:
\begin{enumerate}
\item A vertex for
each of the $\ell$ diagonal blocks, recording diagonal gluing; \item For each
glued component ${\bf i}$: the size $n_{\bf i}$ of its blocks, and
the order $q^{u_{\bf i}}$ of the underlying field (which is $K$
or a finite extension of $F$, when the latter is finite); \item
The identifications arising from Frobenius gluing along the
diagonal;
\item An arrow (directed edge) $r\ra s$ connecting the $r,s$
vertices, for $r < s$, iff the block $ B_{r,s} \ne 0$. Only
primitive arrows are notated, as well as glued imprimitive arrows;
\item The identifications arising from proportional Frobenius gluing above the diagonal.
\end{enumerate}

\subsection{The full quiver covers the classical quiver}

Let us see how the full quiver covers the classical quiver, while
providing more combinatoric information about the algebra.

\begin{rem}\label{J2}
One can interpret the components of the full quiver in terms of
the structure of the corresponding algebra.   Primitive arrows
corresponds to components of $J\setminus J^2$, in analogy to the
classical quiver.
\end{rem}

\begin{rem}
For any algebra $A$ over an infinite field $F$, the classical
quiver of $A$ is the homomorphic image (as a graph) of the full
quiver of any faithful representation of~$A$. (When the base field
$F$ is finite, this assertion may fail for two reasons: We may
have non-identical Frobenius gluing, and the classical quiver need
not be finite, because of infinite dimensionality of the center of
a block over $F$; see \Eref{compress}).
\end{rem}

The main difference from classical quivers, and the reason that we
use full quivers of representations, is that by opening up glued
components, each full quiver is a directed tree (i.e., has no
directed cycle), and has no multiple arrows. This is clear from
the definition, and will be illustrated in our examples.

\begin{exmpl}
The algebra $A_1 = \set{\(\begin{array}{cccc}
\alpha & * & * & *\\
0 & \beta & * &* \\
0 & 0 & \alpha &* \\
0&0&0& \beta^q
\end{array}\) \suchthat \alpha \in
\F_{q^3}, \ \beta \in K}$ taken over $F = \F_q$ (where $K =
\bar{F}$), which is Morita equivalent to \Eref{ex0}.\eq{EX1}, has
full quiver notated as
$$\xymatrix{\comp[I]{1}{3} \ar[r] &  \II \ar[r] &  I \ar[r] & \II^{(1)}}.$$

If instead we took $K A_1 = \set{\(\begin{array}{cccc}
\alpha & * & * & *\\
0 & \beta & * &* \\
0 & 0 & \alpha &* \\
0&0&0& \beta ^q
\end{array}\) \suchthat \alpha ,
 \beta \in K}$, as an algebra over $K$,
then our full quiver would be notated as
$$\xymatrix{I \ar[r] &  \II \ar[r] &  I \ar[r] &  \II^{(1)}}.$$
The classical quiver for this algebra is

\begin{equation}\label{ex0-}
\xymatrix@C=40pt{
I \ar@(dl,ul) \ar@/^6pt/[r] \ar@/^10pt/[r] \ar@/^14pt/[r]
&
\II \ar@(dr,ur) \ar@/^10pt/[l]
&
},
\end{equation}
\smallskip

 \noindent which we get by making the appropriate
identifications in the full quiver (without first suppressing
imprimitive arrows).
\end{exmpl}

 \medskip
\begin{exmpl}
$A = \set{\(\begin{array}{cccc}
\alpha & * & * & *\\
0 & \beta & 0 &* \\
0 & 0 & \beta &* \\
0&0&0&\alpha
\end{array}\) \suchthat \alpha ,
 \beta \in K}$, whose full quiver is notated as
\smallskip
\begin{equation}
\xymatrix@C=40pt{
I \ar@/^0pt/[r] \ar@/^10pt/[rr]
&
\II \ar@/_10pt/[rr]
&
\II \ar@/^0pt/[r]
&
I}.
\end{equation}
The classical quiver for this algebra is
\smallskip
\begin{equation}\label{ex0-+}
\xymatrix@C=40pt{
I \ar@(dl,ul) \ar@/^6pt/[r] \ar@/^10pt/[r]
&
\II \ar@/^10pt/[l] \ar@/^6pt/[l]
},
\end{equation}
\smallskip
which again we get by making the appropriate identifications in
the full quiver (without first suppressing imprimitive arrows).
\end{exmpl}

\subsection{Sub-quivers}

\begin{defn} \label{quivers1}
A {\bf morphism} $\psi\co \Gamma _1 \to \Gamma _2$ of full quivers
is a morphism of directed graphs (i.e., if ${\bf i} \ra {\bf j}$
is an arrow, then $\psi({\bf i}) \ra \psi({\bf j})$ is also an
arrow), preserving gluing, such that if $n_{\bf i}(t_{\bf i})$ is
the subscript for the vertex $\bf i,$ then $\M[n_i](\F_{q^{t_i}})$
embeds into $\M[n_{\psi (\bf i)}](\F_{q^{t_{\psi(\bf i)}}})$,
whereby the superscripts are carried with their respective
vertices.

We say that $\Gamma_1$ is a {\bf sub-quiver} of $\Gamma_2$ if
there is a 1:1 morphism from $\Gamma _1$ to $\Gamma_2$.

The {\bf infinite sub-quiver} of a full quiver is the sub-quiver
comprised of infinite vertices (i.e., vertices $e_r$ for which the
corresponding field $F_r$ is infinite) together with the arrows
between them.
\end{defn}

Note that a morphism could send a primitive arrow to an
imprimitive arrow,
\begin{exmpl}
\begin{enumerate}
\item $\comp[I]{3}{2} \to \comp[\II]{1}{5} \to \comp[\III]{1}{3}$
is a sub-quiver of $$\comp[I]{1}{K} \to \comp[\II]{4}{4} \to
\comp[\III]{1}{5} \to \IV \to \comp[V]{1}{6},$$ under the morphism
sending $I \mapsto \II$, $\II \mapsto \III$, and $\III \mapsto V$,
but is not a sub-quiver of $I \to \comp[\II]{4}{4} \to
\comp[\III]{1}{5} \to \IV \to \comp[V]{1}{2}$. \item $\comp{1}{4}$
is a sub-quiver of $\comp{2}{2}$ because of the regular
representation $\F_{q^4} \sub \M[2](\F_{q^2})$.
\end{enumerate}
\end{exmpl}

\subsection{Full quivers viewed structurally}

\begin{rem}\label{Zariskiclq} Note that the quivers come from
 components of the affine variety defined by the \Zcd\ algebra,
 which can be described in terms of Remark~\ref{decomp}.
The vertices come from the semisimple components, which are just
matrix algebras. However, the arrows are somewhat more
complicated.

In principle, the arrows correspond to matrix units arising in the
representation corresponding to the given Wedderburn block form.
When the base field $F$ is infinite, then $K=F$, and the \Zcr\ is
just a finite dimensional algebra; in this case, any arrow can be
identified with the appropriate element in the algebra.  We call
this the element of $A$ {\bf corresponding to} the arrow.

But when $F$ is finite, the situation is subtler. We could have a
situation such as
$$A = \set{\(\begin{array}{cccc}
F & K \\
0 & K
\end{array}\)} \subset \M[2](K),$$
whose full quiver is
$$\xymatrix@C=40pt{
\comp[\bullet]{1}{1} \ar@<2pt>@/^0pt/[r] & \comp[\bullet]{1}{K}}
$$

Note that when $F_1 \subset F_2 \subset K$ we get the same full
quivers $$\xymatrix@C=40pt{
\comp[\bullet]{1}{1} \ar@<2pt>@/^0pt/[r] & \comp[\bullet]{1}{2}}
$$from the algebras $\set{\(\begin{array}{cccc}
F_1 & F_2 \\
0 & F_2
\end{array}\)} $ and $\set{\(\begin{array}{cccc}
F_1 & K \\
0 & F_2
\end{array}\)} $, so
 we cannot recover the radical component from the arrow.

In this case, we view the arrows  as corresponding to linear
functionals, and we often are led to use the generic elements
obtained in \cite[Construction 7.14]{BRV1}. By definition, since
any generic element is taken from a given component in the affine
variety defined by the Zariski closure, the choice of generic
element for this component does not affect the polynomial
relations. Thus the gluing does not depend on the particular
choice of generic element corresponding to a given arrow.
\end{rem}

Here is a more precise way of viewing the last paragraph of the
previous remark.

Let us record a monoid action needed for subsequent papers.

\begin{rem}
\label{grade} Let $\mathcal M_i$ denote the multiplicative monoid
$\mathbb Z/q^{t_i} \mathbb Z \cup \{ 0 \}.$ Then  $\mathcal M_k$
acts naturally on $F_k$ via $ [j] a = a^j,$ where $a^0 = 1.$
Consequently, $\mathcal M_1 \times  \dots \times \mathcal M_k$
acts on $A$, leading to a grading in terms of the sub-Peirce
decomposition and Frobenius gluing. In \cite{BRV4} we use this
grading to study the polynomial identities of $A$.
\end{rem}

\section{Specific classes of full quivers}

In this section, we describe various classes of full quivers,
starting with the easiest case, and then increasing in complexity.
A full quiver is called a {\bf path} if its initial and terminal
vertices each have degree $1$ and all other vertices have degree
$2$. Let us consider some examples of paths and other relatively simple examples of full quivers.

\subsection{Full quivers of semisimple algebras}

Having described representations of semisimple algebras in Remark
\ref{semis1}, we can easily give their full quivers, which are
just finite sets of disconnected vertices (since the radical $J =
0$). The full quiver of a matrix algebra is a single bullet
$\bullet$ without any arrows. The full quiver of a direct sum of
matrix algebras is just a set of  disconnected bullets, in view of
Remark \ref{irrc}.

\subsection{Elementary full quivers}

After the case of unconnected vertices without arrows, the next
most basic kind of full quiver, called {\bf elementary}, is a full
quiver that consists of a single arrow; this corresponds to module
and bimodule structures over a semisimple algebra. Any algebra
having an elementary full quiver satisfies $J^2 = 0$; let us
consider the various possibilities.

\begin{enumerate}
\item\label{E1} No gluing. The full quiver $\comp[I]{m}{t_1} \to
\comp[\II]{n}{t_2}$ corresponds to the algebra $$A =
\smat{\M[m](F_1)}{L}{0}{\M[n](F_2)},$$ where $F_i$ are
intermediate subfields of $F\sub K$ of order $|F|^{t_i}$ (or are
infinite in the absence of the $t_i$), and $L$ is a non-zero
$\M[m](F_1),\M[n](F_2)$-subbimodule of the upper right part of
$\M[m+n](K)$. The defining relations are $\lam_{m+i',j} = 0$,
$\lam_{ij}^{\card{F_1}} = \lam_{ij}$, and
$\lam_{m+i',m+j'}^{\card{F_{2}}} = \lam_{m+i',m+j'}$, for all $1
\le i, j \le m$, $1 \leq i', j' \leq n$ (indeed, these relations
say that the first diagonal block has coefficients in $F_1$ and
the second diagonal block has coefficients in $F_2$), and the
extra relations defining $L$.

\item\label{E2} Identical gluing. The full quiver $\comp[I]{n}{t}
\to I$ corresponds to the algebra $A$ as in~\eq{E1}, but with $m =
n$, and with gluing. Note that $A \sub \M[n](K[\la]/\ideal{\la
^2})$. (If $F$ is infinite, then  $\id(A) = \id(\M[n](F).$ If
$|F_1| = q,$ and $g$ is a central polynomial for $M_n(F_1),$ then
$g^q -g \in \id(M_n(F_1))$ but is not in $\id(A)$.)

For $m=n=1$ we have $A =
\set{\smat{\alpha}{\beta}{0}{\alpha}\suchthat \alpha \in F_1,\,
\beta \in L}$ where $F_1$ is as before, and $F_1 \sub L \sub K$ is
any (\Zcd) intermediate field. We have the same relations in
\eq{E1}, plus $\lam_{22} = \lam_{11}$.

\item\label{E3} Frobenius gluing.
The full quiver $\comp[I]{n_1}{t_1} \to I^{(u)}$ corresponds to
the algebra analogous to \eq{E2}, but with Frobenius gluing of
degree $\ell$. For $m=n=1$ we have $$A =
\set{\smat{\alpha}{\beta}{0}{\alpha^{\pi}}\suchthat \alpha \in
F_1,\, \beta \in L}$$ where $F_1,L$ are as before, and $\pi =
q^u$. We have the same relations as for \eq{E2}, with
$\lam_{22} = \lam_{11}^{\pi}$ replacing $\lam_{22} = \lam_{11}$.
Note that
$$\smat{\alpha}{\beta}{0}{\alpha^{\pi}}
\smat{\alpha'}{\beta'}{0}{{\alpha'}^{\pi}} =
\smat{\alpha\alpha'}{\alpha\beta'+\beta
{\alpha'}^{\pi}}{0}{\alpha^{\pi}{\alpha'}^{\pi}},$$ so in some
sense the multiplication on the 1,2 component is ``skewed'' by a
$\pi$-power. For general $m$ and $n$ we take $\pi$-powers of the
corresponding coordinates.

\item\label{E4} $\circ \to \bullet$ corresponds to the algebra
$\set{\smat0*0*}$ without 1. Similarly, $\bullet \to \circ$
corresponds to the algebra $\set{\smat**00}$ without 1. (These are
similar to \eq{E1}, where one of the diagonal components is 0.
Here, $\id(A)$ is $x\id(M_n(F_1))$ and $\id(M_n(F_1))x$,
respectively, by Lewin's Theorem.)

\item\label{E5} $\circ \to \circ$ corresponds to the set of
matrices whose only nonzero entry is in the upper right corner of
$2 \times 2$ matrices. (Here, $\id(A) = xy.$)
\end{enumerate}

\subsection{Branches of full quivers}

In order to investigate full quivers further, we need the following important notion.

\begin{defn}
A path in a full quiver is a {\bf branch} if it cannot be
extended, namely if no arrow enters its first vertex, and no arrow
exits its final vertex.
The {\bf length} of a path is its number of arrows, which is one
less than the number of vertices. (Thus a branch of length $\ell
-1$ has $\ell$ vertices.) Likewise, an arrow has {\bf length}
$\ell$ if there is a path from $i$ to $j$ having length $\ell$.
\end{defn}

\begin{rem}
Since radical elements correspond to arrows, we see that the index
of nilpotence of an algebra is at most $\ell +1$, where $\ell$ is
the maximal length of the paths of its full quiver.
\end{rem}

\begin{rem}\label{trtwist}
Given a single branch with Frobenius proportional gluing, one can
make all the parameters equal to $1$ via a change of base. Thus,
in this case, pure proportional gluing in any given branch becomes
identical gluing.
\end{rem}

\section[Gluing, compression and change of base]{Gluing, compression of
 full quivers, and change of base ring}

The definition of quivers is combinatoric. Although we have
defined full quivers of linear representations over a field, the
same definition applies to linear representations over any
commutative affine algebra, such that all entries are given in
blocks along or above the diagonal.

Closely related representable algebras (involving
 gluing) could require (faithful) representations of widely
varying degrees, which we can describe better in terms of
representations of much lower degree over arbitrary commutative
affine algebras. Let us first describe commutative affine algebras
in terms of full quivers of representations. We start with unglued
arrows.

\begin{prop}\label{comm2}
An indecomposable algebra (see \Rref{irrc}) with unglued arrows is
commutative iff all of its vertices are identically glued and
every branch has length $\le 1$.
\end{prop}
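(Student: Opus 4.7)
The plan is to prove each direction of the iff separately, by working in a Wedderburn block form for $A = S \oplus J$ realizing the full quiver $\Gamma$.

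For the backward direction $(\Leftarrow)$, assume every vertex of $\Gamma$ is identically glued and every branch has length at most $1$. The length condition gives $J^{2} = 0$ (radical elements correspond to arrows, and $J^{\ell+1}=0$ where $\ell$ is the maximal path length), so any two radical elements multiply to zero. Commutativity of each embedded $\M[n_r](F_r) \subseteq A$ forces $n_r = 1$ throughout, making $S$ a product of fields. For any arrow $r \to s$ with $r$ and $s$ identically glued, each element of $S$ takes the same value $\alpha$ at both $r$ and $s$; hence for any $\beta e_{rs} \in A$ and diagonal $a \in S$ we compute $a \cdot (\beta e_{rs}) = \alpha\beta e_{rs} = \beta\alpha e_{rs} = (\beta e_{rs}) \cdot a$. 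Thus $S$ commutes with $J$, and $A = S \oplus J$ is commutative.

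For the forward direction $(\Rightarrow)$, suppose $A$ is commutative, indecomposable, with unglued arrows. First, commutativity forces $n_r = 1$ for all $r$, as $\M[n_r](F_r)$ is noncommutative for $n_r \geq 2$. Next, fix any arrow $r \to s$: the unglued-arrows hypothesis provides a nonzero $\beta e_{rs} \in A$ varying freely in its block. Commutativity with any diagonal element $a \in A$ having values $\alpha_r,\alpha_s$ at $r,s$ yields $\alpha_r\beta = \beta\alpha_s$ in $K$, so $\alpha_r = \alpha_s$ for every admissible $a$; this forces $r$ and $s$ into the same glued component with zero relative Frobenius twist, i.e., identically glued. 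A length-$2$ path $r \to s \to t$ would furnish $\beta e_{rs}, \gamma e_{st} \in A$ with commutator $[\beta e_{rs}, \gamma e_{st}] = \beta\gamma e_{rt} \ne 0$ in the field $K$, contradicting commutativity; so every branch has length at most $1$.

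To propagate identical gluing from arrow endpoints to all vertices, I would use indecomposability: if a glued component $C$ had no incident arrow, the sum $e_C$ of its idempotents would be central in $A$, yielding the proper direct-sum decomposition $A = Ae_C \oplus A(1-e_C)$, contrary to hypothesis. Thus every glued component meets an arrow endpoint, and chaining the arrow-endpoint identifications of the preceding step with the transitivity of the relative Frobenius exponent (Remark~\ref{offdiag}) eliminates any residual twist on the remaining vertices. The main obstacle I expect is this last propagation step: the commutator computation directly controls only the pairs $(r,s)$ appearing as arrow endpoints, so extending identical gluing to every glued vertex requires careful use of indecomposability together with the graph-theoretic structure of the glue equivalence, in order to rule out a lingering Frobenius twist on a vertex not itself adjacent to any arrow.
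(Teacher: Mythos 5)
Your approach matches the paper's. For $(\Rightarrow)$ the paper makes the same three elementary calculations you do---$e_{ii}e_{ij}=e_{ij}\ne 0=e_{ij}e_{ii}$ rules out an arrow between unglued vertices; $\alpha e_{ii}+\alpha^s e_{jj}$ fails to commute with $e_{ij}$ unless $\alpha^s=\alpha$, forcing identical gluing at arrow endpoints; and $e_{ij}e_{jk}=e_{ik}\ne 0=e_{jk}e_{ij}$ rules out consecutive arrows---then cites Remark~\ref{irrc} to connect the vertices, and states $(\Leftarrow)$ to be obvious.

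Two issues need attention. First, in $(\Leftarrow)$ the sentence ``Commutativity of each embedded $\M[n_r](F_r)\subseteq A$ forces $n_r=1$'' is circular: in that direction commutativity is the conclusion, not a hypothesis, and the assumptions ``identically glued, branch length $\le1$'' do not by themselves force matrix degree one. For instance $\M[2](K)[\epsilon]/\ideal{\epsilon^2}$, a local (hence indecomposable) algebra with full quiver $\comp[I]{2}{K}\to I$, has identical gluing and branch length one yet is noncommutative; so matrix degree one must be read into the setup, consistent with Example~\ref{5.2} that follows the proposition, rather than derived from the stated hypotheses. Second, the propagation worry you flag at the end is well founded but not resolved by your fix: centrality of $e_C$ only rules out a glued component $C$ carrying no arrows at all, whereas a Frobenius-twisted vertex sitting inside a glued component that otherwise has arrows would escape both your commutator argument and the $e_C$ argument. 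The paper's proof elides exactly this same point, passing from ``all vertices are glued'' directly to ``the gluing must all be identical'' via the arrow-endpoint computation; both proofs therefore tacitly assume a non-redundant Wedderburn block form in which every glued vertex is constrained through arrows.
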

\begin{proof} $(\Rightarrow)$ If we
have an arrow between unglued vertices, then the algebra cannot be
commutative, since $e_{ii}e_{ij} = e_{ij}$ whereas $e_{ij}e_{ii} =
0$. In light of \Rref{irrc}, this means all vertices must be
glued; the gluing must all be identical, because $\alpha e_{ii} +
\alpha^s e_{jj}$ does not commute with $e_{ij}$ for $\alpha^s \neq
\alpha$. Moreover, the same argument ($e_{ij}e_{jk} = e_{ik}$
whereas $e_{jk}e_{ij} = 0$) shows that there cannot be two
consecutive arrows.

$(\Leftarrow)$ is obvious.
\end{proof}

\begin{exmpl}\label{5.2}
The full quivers of \Pref{comm2} are bi-partite, with disjoint
sets of
sources and sinks. For example,
\vskip-0.6cm
\begin{equation*}
\xymatrix@C=40pt@R=4pt{
\comp[I]{1}{K} \ar[r] \ar[rdd] & \comp[I]{1}{K}
\\
{} & {} & {}
\\
\comp[I]{1}{K} \ar[r] & \comp[I]{1}{K} }
\begin{array}{c} {\ } \\ {\ } \\ \mbox{or} \end{array}
\qquad \qquad
\xymatrix@C=40pt@R=4pt{
\comp[I]{1}{K} \ar[r] \ar[rdd] & \comp[I]{1}{K}
\\
{} & {} & {}
\\
\comp[I]{1}{K} \ar[r] \ar[ruu]|(0.5){\hole} & \comp[I]{1}{K} }
\end{equation*}
\end{exmpl}

\subsection{Compressed full quivers}

When glued arrows are allowed, there are many other possibilities
for representations of commutative algebras.
\begin{exmpl}
The local commutative algebra $F[\lam]/\ideal{\lam^4}$ has the presentation
\vskip 0.2cm
\begin{equation}\label{exJ}
\xymatrix@C=40pt{
\comp[I]{1}{K} \ar@/^0pt/[r]|(0.5){\beta} \ar@/^14pt/[rr]|(0.5){\gamma}
&
\comp[I]{1}{K}
\ar@/^14pt/[rr]|(0.25){\hole}|(0.501){\gamma}
\ar@/^0pt/[r]|(0.4){\beta }
&
\comp[I]{1}{K} \ar@/^0pt/[r]|(0.4){\beta}
&
\comp[I]{1}{K}},
\end{equation}
in which the $(1,2)$, $(2,3)$ and $(3,4)$ entries are identically
glued, as well as the $(1,3)$ and $(2,4)$ entries. (The
non-primitive arrow of length $2$ is omitted from the diagram).
Omitting all the non-primitive arrows, as in
\begin{equation}\label{exJ2}
\xymatrix@C=40pt{
\comp[I]{1}{K} \ar@/^0pt/[r]|(0.5){\beta}
&
\comp[I]{1}{K}  \ar@/^0pt/[r]|(0.4){\beta }
&
\comp[I]{1}{K} \ar@/^0pt/[r]|(0.4){\beta}
&
\comp[I]{1}{K}},
\end{equation}
would release the gluing of length-1 arrows, yielding a
$4$-dimensional algebra which is not commutative. Frobenius gluing
of the length-1 arrows also yields non-commutative algebras.
\end{exmpl}

\begin{exmpl}\label{compress}
Generalizing the previous example, gluing within a branch leads to the following sort of situation:
\smallskip
$$B_\ell = \set{\(\begin{array}{cccccc}
\alpha & \beta & \gamma & \dots & \dots\\
0 & \alpha &\beta & \ddots & \vdots \\
0 & 0 & \ddots &\ddots &\gamma\\
0&0&0&\alpha & \beta \\
0&0&0&0 & \alpha
\end{array}\) \suchthat \alpha ,
 \beta, \gamma \in K}.$$
\smallskip

This algebra is isomorphic to $F[\lam]/\ideal{\lam^\ell}$, where
$\ell$ is the dimension of matrices. We call the full quiver of
this algebra the {\bf glued triangle} $I(\ell)$; in the previous
notation it is $I \to I \to \cdots \to I$, where all arrows of
equal length are identically glued. We call the glued triangle
$I(\ell)$ {\bf isolated} if the vertex $I$ does not appear
elsewhere in the branch. (Note for $|F|<\infty$ that
$\id(B_\ell)\supset \id(F).)$

(Allowing infinite dimensional representations, one can embed
$F[\lambda]$ in $\End(F^{\omega})$ by letting $\lambda$ be the
shift forward operator, corresponding to the infinite full quiver
$I(\infty) = I \to I \to \cdots$).

Conversely, given an isolated glued triangle (with identical
gluing) in a branch, we could compress it to a point (and erase
any duplication of arrows) and also erase all imprimitive arrows,
relabelling the vertex $\comp[\bf i]{d_{\bf i}}{n_{\bf i}}$ as
$\comp[\bf i]{d_{\bf i}}{n_{\bf i}}(\ell)$, to indicate that, in
that block, we have replaced the field $\F_{q_{\bf i}}$ by the
commutative affine algebra $\F_{q_{\bf i}}[\lam]/\ideal{\lam^\ell}
= \F_{q_{\bf i}}[\vareps_\ell],$ where ${q_{\bf i}} = q^{n_{\bf
i}}$ and $(\vareps_\ell)^\ell = 0$. The resulting full quiver
(after this process) is called the {\bf compressed quiver}.
\end{exmpl}

\begin{exmpl}
Compression of the glued triangle eliminates redundant,
identically glued, imprimitive arrows originating or ending in the
compressed idempotents. Therefore, one can compress
$$
\xymatrix@C=40pt{ \comp[I]{1}{K} \ar@/^0pt/[r]
\ar@/^11pt/[rr]|(0.5){\delta} & \comp[I]{1}{K}
\ar@/^0pt/[r]|(0.5){\delta} & \comp[\II]{1}{K}}$$ to
$$
\xymatrix@C=40pt{ \comp[I]{1}{K}(2) \ar@/^0pt/[r]
\ar@/^0pt/[r]|(0.5){\,\delta\,} & \comp[\II]{1}{K}},$$ and
\begin{equation*}
\xymatrix@C=40pt{
\comp[I]{1}{K}
    \ar@/^0pt/[r]
    \ar@/^14pt/[rr]|(0.5){\beta}
    \ar@/^26pt/[rrr]|(0.5){\alpha}
&
\comp[I]{1}{K}
\ar@/^14pt/[rr]|(0.25){\hole}|(0.501){\beta}
\ar@/^0pt/[r]|(0.5){\gamma}
&
\comp[\II]{1}{K} \ar@/^0pt/[r]
&
\comp[\II]{1}{K}}
\end{equation*}
to
\begin{equation*}
\xymatrix@C=40pt{
\comp[I]{1}{K}(2) \ar@/^0pt/[r]
&
\comp[\II]{1}{K}(2)};
\end{equation*}
but
$$\xymatrix@C=40pt{
\comp[I]{1}{K} \ar[r] & \comp[I]{1}{K} \ar[r] & \comp[\II]{1}{K}},
$$
which is a short-hand notation for
$$\xymatrix@C=40pt{
\comp[I]{1}{K} \ar@/^0pt/[r] \ar@/^11pt/[rr]|(0.5){\delta} &
\comp[I]{1}{K} \ar@/^0pt/[r]|(0.5){\epsilon} & \comp[\II]{1}{K}},
$$ cannot be compressed.
\end{exmpl}

Note that any morphism of full quivers naturally induces a
morphism of compressed quivers.

\begin{rem}
As noted in Example \ref{compress}, compression can enable us to
decrease the degree of a representation, at the expense of
introducing various commutative affine algebras as base rings for
the compressed vertices.

Compression of a branch involves adjoining elements $\vareps
_{i,\ell}$ to the base field $F$, for various $\ell,$ such that
$\vareps _{i,\ell}^\ell = 0$. Thinking of these $\vareps
_{i,\ell}$ as ``infinitesimals of order $\ell$,'' we see that the
compressed branch merely extends the base ring by adjoining
various infinitesimals. The down side of this approach is that the
algebraic interpretation of our full quivers becomes much more
complicated, since part of the radical now belongs to the
commutative base ring, and arrows no longer need correspond to 1:1
maps.
\end{rem}

Let us describe compression systematically, extracting the full
potential of identical gluing. A {\bf local component} of a given
vertex $I$ is a maximal path of the form $I(\ell)$ in which it
participates (a vertex can belong to more than one local
component, as seen in \Eref{5.2}). Let $P$ denote the set of
vertices glued to a given vertex $I$. If:
\begin{itemize}
\item[-] the local components of the vertices ${\bf i} \in P$ are
disjoint; \item[-] all the local components have the same length;
\item[-] arrows of the same length within all local components of
the same length are identically glued; \item[-] if ${\bf i}, {\bf
i}' \in P$ are in the same local component, and ${\bf j}, {\bf j}'
\in P$ are in the same local component, then the arrow from ${\bf
i}$ to ${\bf j}$ is identically glued to the arrow from ${\bf i}'$
to ${\bf j}'$; and \item[-] for every ${\bf j} \not \in P$, all
arrows from ${\bf i} \in P$ to ${\bf j}$ (when present) are
identically glued, and all arrows from ${\bf j}$ to ${\bf i} \in
P$ (when present) are identically glued,
\end{itemize}
then we may compress all local components, labelling each by
notation of the form $I(\ell)$. This process can be iterated,
labelling a subquiver of the form $I(\ell) \ra I(\ell)$ as
$I(\ell)(2)$ etc., until no further compression is possible; then
the full quiver is in {\bf incompressible form}. The name `local
component' remains appropriate, since the algebra
$R[\epsilon\subjectto \epsilon^{\ell} = 0]$ is local whenever $R$
is local. Since the full quiver is finite, we have
\begin{prop}
Every full quiver has a unique incompressible form.
\end{prop}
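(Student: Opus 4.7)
My plan is to prove existence by termination of the compression process and uniqueness by a confluence argument in the spirit of Newman's lemma for finite rewriting systems.

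For termination, note that each non-trivial compression step strictly decreases the number of vertices of the quiver: every local component of length $\ell\ge 2$ collapses to a single vertex, and if the glued set $P$ contains several vertices whose local components simultaneously satisfy the compression conditions, they all shrink in parallel. Since a full quiver has only finitely many vertices, any sequence of compression steps terminates, and the terminal state is by definition an incompressible form.

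For uniqueness, I would show that two compressions parametrized by distinct vertex labels $I$ and $J$ commute. A compression at label $I$ only touches vertices in the glued set $P$ associated with $I$, replacing each maximal local component $I(\ell)$ by a single vertex still bearing the label $I(\ell)$, while preserving all arrows, gluings, matrix degrees, and Frobenius twists on the complementary part of the quiver; in particular, every $J$-vertex and every arrow not internal to $P$ survives unchanged, and arrows external to $P$ retain their gluing data when attached to the new compressed vertices. Hence the five conditions that govern compressibility at $J$ (disjointness of local components, equality of lengths, identical gluing of arrows of equal length inside local components, identical gluing of cross-arrows inside the glued set, and identical gluing of arrows to and from vertices outside this set) hold before the $I$-compression if and only if they hold after. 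By symmetry the same applies with $I$ and $J$ interchanged, so the two compressions commute, establishing local confluence.

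The main obstacle I anticipate is keeping track of iterated compressions, in which labels acquire nested depths of the form $I(\ell_1)(\ell_2)\cdots$. One must verify that the labelling convention never causes a newly formed $I(\ell)$-vertex to be accidentally glued to a pre-existing vertex of a different label, so that the gluing equivalence classes on vertices remain well-defined through arbitrary sequences of compressions. This reduces to a bookkeeping check, because gluing is dictated by the Roman-numeral label together with its depth history, both of which are inherited unambiguously from the original full quiver. Once commutation is established, termination combined with local confluence (Newman's lemma) yields a unique incompressible form, completing the proof.
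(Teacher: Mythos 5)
Your approach is correct and, in fact, substantially more rigorous than what the paper offers: the paper's entire justification is the clause ``since the full quiver is finite,'' which only establishes termination (hence existence of some incompressible form) and leaves uniqueness unaddressed. Your division into termination plus local confluence via Newman's lemma is precisely the right tool to close that gap, and the observation that a nontrivial compression step strictly decreases the vertex count gives termination cleanly.

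The one place you assert rather than verify is the claim that the five compressibility conditions at label $J$ ``hold before the $I$-compression if and only if they hold after.'' For local confluence you only need the direction starting from a state where both $I$ and $J$ are compressible, and there the claim is genuinely true but deserves a line of justification. The delicate case is condition (5) (identical gluing of cross-arrows): a priori, compressing at $I$ might merge two $I$-vertices $w_1,w_2$ that receive arrows from $P_J$ lying in different gluing classes, which would break $J$-compressibility afterward. This cannot happen, because $J$-compressibility already forces, for each fixed $I$-vertex $w$, all arrows $w\to P_J$ into a single gluing class, while $I$-compressibility forces all arrows from a fixed $J$-vertex into $P_I$ into a single gluing class; chasing these two constraints along a $J$-local component identifies the two classes, so the post-compression arrow set remains identically glued. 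With that small verification spelled out, the commutation of compressions at distinct labels follows and Newman's lemma gives the unique normal form. Your bookkeeping remark about nested labels $I(\ell_1)(\ell_2)\cdots$ is apt and handles the iterated case correctly.
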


\begin{exmpl}\label{compressE}
\begin{enumerate}
\item The full quivers
$$
\xymatrix@C=40pt{ \comp[I]{1}{K} \ar@/^0pt/[r]
\ar@/^11pt/[rr]|(0.5){\delta} & \comp[I]{1}{K}
\ar@/^0pt/[r]|(0.5){\delta} & \comp[I]{1}{K},
&
\comp[I]{1}{K} \ar@/^0pt/[r]|(0.5){\delta}
\ar@/^11pt/[rr]|(0.5){\delta} & \comp[I]{1}{K} \ar[r] &
\comp[I]{1}{K},}
$$
can be compressed to
$$
\xymatrix@C=40pt{
\comp[I]{1}{K}(2) \ar@/^0pt/[r] \ar@/^0pt/[r] & \comp[I]{1}{K},
&
\comp[I]{1}{K} \ar[r] \ar@/^0pt/[r] & \comp[I]{1}{K}(2)},$$
respectively.

\item\label{Gl1} The full quiver
\begin{equation*}
\xymatrix@C=40pt{
\comp[I]{1}{K}
    \ar[rd]|(0.3){\delta_1} \ar[rrd]|(0.3){\delta_2}
    \ar@/^0pt/[r]
    \ar@/^14pt/[rr]|(0.5){\beta}
    \ar@/^26pt/[rrr]|(0.5){\alpha}
&
\comp[I]{1}{K}
    \ar[d]|(0.3){\delta_1} \ar[rd]|(0.3){\delta_2}
\ar@/^14pt/[rr]|(0.25){\hole}|(0.501){\beta}
\ar@/^0pt/[r]|(0.4){\gamma}
&
\comp[\II]{1}{K} \ar@/^0pt/[r]
    \ar[ld]|(0.3){\varepsilon_1} \ar[d]|(0.3){\varepsilon_2}
&
\comp[\II]{1}{K}
    \ar[lld]|(0.3){\varepsilon_1} \ar[ld]|(0.3){\varepsilon_2}
\\
{} & \comp[\III]{1}{K} \ar[r] & \comp[\III]{1}{K} & {} }
\end{equation*}
can be compressed to
$$\xymatrix@C=40pt{
\comp[I]{1}{K}(2) \ar[r] & \comp[\II]{1}{K}(2) \ar[r] &
\comp[\III]{1}{K}(2) },$$ with the quadruple arrow denoted by
$\alpha$ becoming imprimitive. \item \label{Gl2} Similarly,
\begin{equation*}
\xymatrix@C=40pt{
\comp[I]{1}{K}
    \ar[rd]|(0.3){\delta_1} \ar[rrd]|(0.3){\delta_2}
    \ar@/^0pt/[r]
    \ar@/^14pt/[rr]|(0.5){\beta}
    \ar@/^26pt/[rrr]|(0.5){\alpha}
&
\comp[I]{1}{K}
    \ar[d]|(0.3){\delta_1} \ar[rd]|(0.3){\delta_2}
\ar@/^14pt/[rr]|(0.25){\hole}|(0.501){\beta}
\ar@/^0pt/[r]|(0.4){\gamma}
&
\comp[I]{1}{K} \ar@/^0pt/[r]
    \ar[ld]|(0.3){\varepsilon_1} \ar[d]|(0.3){\varepsilon_2}
&
\comp[I]{1}{K}
    \ar[lld]|(0.3){\varepsilon_1} \ar[ld]|(0.3){\varepsilon_2}
\\
{} & \comp[\III]{1}{K} \ar[r] & \comp[\III]{1}{K} & {} }
\end{equation*}can be compressed to
$$\xymatrix@C=40pt{
\comp[I]{1}{K}(2) \ar[r] \ar@/^12pt/[rr]|(0.5){\alpha} &
\comp[I]{1}{K}(2) \ar[r]|(0.5){\beta} & \comp[\III]{1}{K}(2) },$$
and further compressed to $\comp[I]{1}{K}(2)(2) \ra
\comp[\III]{1}{K}(2)$.
\end{enumerate}
\end{exmpl}
\subsection{Partial gluing up to infinitesimals}

Consider a fixed vertex $I$ in a full quiver, and let $A_0$ be the
subalgebra generated by all the sub-quivers of the form $I(\ell)$.
The matrix algebra represented by $I$ (which may itself be an
algebra over a local ring) is a homomorphic image of $A_0$, and
the kernel (generated by the arrows) is nilpotent. When there is
enough gluing so that this kernel is generated by one arrow, we
say that all the components of $A_0$ are {\bf glued up to
infinitesimals}.

\begin{exmpl}\label{gluebad}
Consider the following (incompressible) full quivers: \vskip 0.2cm
\begin{equation}\label{ex6.a}
\xymatrix@C=40pt{
I \ar@/^0pt/[r] \ar@/^14pt/[rr]|(0.5){\gamma}
&
I \ar@/^14pt/[rr]|(0.25){\hole}|(0.51){\gamma}
&
\II \ar[r]
&
\II } \\
\end{equation}
\begin{equation}\label{ex6.b}
\xymatrix@C=40pt{
I \ar@/^0pt/[r]|(0.5){\beta} \ar@/^14pt/[rr]|(0.5){\gamma}
&
I \ar@/^14pt/[rr]|(0.25){\hole}|(0.51){\gamma}
&
I \ar@/^0pt/[r]|(0.5){\beta}
&
I } \\
\end{equation}
\begin{equation}\label{ex6.c}
\xymatrix@C=40pt{
I \ar@/^0pt/[r]|(0.5){\beta} \ar@/^14pt/[rr]|(0.5){\gamma}
&
I \ar@/^14pt/[rr]|(0.25){\hole}|(0.51){\gamma}
&
I \ar@/^0pt/[r]|(0.5){\delta}
&
I } \\
\end{equation}

One can
view the algebra corresponding to diagram \eq{ex6.a} as
$$\set{\(\begin{array}{cccc}
\alpha & \delta & \gamma & \nu\\
0 & \alpha & 0 & \gamma \\
0 & 0 & \beta &\mu \\
0&0&0& \beta
\end{array}\) \suchthat \alpha, \beta, \gamma, \delta, \mu, \nu \in K}.$$

Partitioning these terms into $2\times 2$ matrices enables us to
rewrite this algebra as
$$\set{\(\begin{array}{cccc}
\phi & \chi \\
0 & \psi
\end{array}\) \suchthat \phi, \psi, \chi \in F[\vareps_2]},$$
where $\vareps_2$ satisfies the relations $\vareps_2^2 = 0$.

Likewise the algebra corresponding to diagram \eq{ex6.b} can be
viewed as
$$\set{\(\begin{array}{cccc}
\alpha & \delta & \gamma & \nu\\
0 & \alpha & 0 & \gamma \\
0 & 0 &\alpha &\delta \\
0&0&0& \alpha
\end{array}\) \suchthat \alpha, \beta, \gamma, \delta , \nu \in K},$$
which can be rewritten as
$$\set{\(\begin{array}{cccc}
\phi & \chi \\
0 & \phi
\end{array}\) \suchthat \phi, \chi \in F[\vareps_2]}.$$

The algebra corresponding to diagram \eq{ex6.c},
$$\set{\(\begin{array}{cccc}
\alpha & \beta & \gamma & \nu\\
0 & \alpha & 0 & \gamma \\
0 & 0 & \alpha &\delta \\
0&0&0& \alpha
\end{array}\) \suchthat \alpha, \beta, \gamma, \delta, \nu \in K},$$
can be presented with an extra relation, as
$$\set{\(\begin{array}{cccc}
\phi & \chi \\
0 & \psi
\end{array}\) \suchthat \phi, \psi, \chi \in F[\vareps_2], (\phi-\psi)^2 = 0}.$$
\end{exmpl}

\subsubsection{Generic elements used with partial gluing}

In \cite[Construction~7.19]{BRV1} we exhibit, for a Zariski closed
algebra $A$ over a field $F$, an explicit generic algebra,
generated by glued sums of sufficiently long products of generic
elements. This algebra then generates the variety of
$F$-identities of $A$. The easiest example is given below as
\eq{G1} in  \Eref{generic2}, where we adjust the construction to
more general base rings. When representing generic algebras, one
may refine the gluing procedure, as indicated in the following
sequence of examples.

\begin{exmpl}\label{generic2}
\begin{enumerate}
\item\label{G1}  The generic upper triangular $2\times 2$ matrix
$Y_i = \left(\AR{\xi_{i1} & \xi_{i3} \\ 0 & \xi_{i2}}\right)$ is
defined over the polynomial algebra $C= F[\xi_{ij} : i \in \N, 1
\le j \le 3 ];$ we get the {\bf generic algebra of upper
triangular matrices} by taking the subalgebra of $\M[2](C)$
generated by the~$Y_i $.

\item In \eq{G1}, we can glue the two diagonal components, by
replacing $C$ by the algebra
$$\bar C = F[\xi_{ij} : i \in \N, 1 \le j \le 3]/ \ideal{\xi_{i1}-\xi_{i2}
}.$$

\item\label{G4}  We can modify the gluing procedure to leave only
an ``infinitesimal part'' unglued. For example, let us replace $C$
by
$$\bar C = F[\xi_{ij}: i \in \N, 1 \le j \le 3]/ \langle
(\xi_{i_1,1}-\xi_{i_1,2}) (\xi_{i_2,1}-\xi_{i_2,2})\rangle.$$
Setting $\xi_{i2} = \xi_{1i}+\vareps_i$, one could think of $\bar
C$ as generated by $\xi_{i1}, \xi_{i3}$ and $\vareps_i$ ($i \in
\N$), where $\vareps_{i_1} \vareps_{i_2} = 0$ for all $i_1,i_2$,
and then the algebra of matrices is generated by $Y _i =
\smat{\xi_{i1}}{\xi_{i3}}{0}{\xi_{i1}+\vareps_i}$. Thus we view
the $\vareps_i$ as ``infinitesimal elements of second order,'' in
the sense that the product of any two of them is zero. From this
point of view, we are {\bf gluing} the two diagonal entries {\bf
up to infinitesimals}.

This example is symmetric, in the sense that the matrices $Y _i =
\smat{\xi_{i2}-\vareps_i}{0}{0}{\xi_{i2}}$, so one could reverse
the roles of $\xi_{i1}$ and $\xi_{i2}.$

\item More generally, for any $k>1,$ take $$\bar C =
F[\xi_{i1},\xi_{i2},\xi_{i3}: i \in I ]/ \langle
(\xi_{i1}-\xi_{i2}): i \in I \rangle^k.$$ One could think of the
algebra of \eq{G4} instead as generated by matrices $Y _i =
\smat{\xi_{i1}}{\xi_{i3}}{0}{\xi_{i1}+ \vareps_i}$ where $(\sum F
\vareps_i)^k = 0$. Thus, we view the $\vareps_i$ as
``infinitesimal elements of order $k$,'' in the sense that the
product of any $k$ of them is zero.

\item Consider the ideal $I$ of the commutative polynomial algebra
$F[\xi_{ij}^{(\ell)}]$, generated by the ${\xi_{ij}^{(\ell)}}^q
-\xi_{ij}^{(\ell)}$.  The algebra $F[\xi_{ij}^{(\ell)}]/I^k$ can
be viewed as obtained by deforming a finite field by
``infinitesimal elements of order $k$.''

\item\label{G7} We can also consider generic upper triangular
matrices with gluing of the diagonal entries up to infinitesimals,
by adjoining extra indeterminates to $\bar C$ and considering
matrices of the form $$\left(\AR{\xi_{i1} & \xi_{i3} \\ 0 &
\xi_{i1}+ \epsilon } \right);$$ represented over a field, these
could be viewed as $3 \times 3$
matrices of the form $$\left(\begin{matrix}\xi_{i1} & \xi_{i3} & \xi_{i3} \\
0 & \xi_{i1} & \xi_{i2}\\ 0 & 0 & \xi_{i1}\end{matrix} \right).$$

\item\label{G8} We can complicate \eq{G1}, taking instead the
algebra generated by matrices $Y _i = \left(\AR{\xi_{i1} & * & *
\\ 0 & \xi_{i2} & *
\\ 0 & 0 & \xi_{i2}+ \vareps_i}\right)$ where the $\vareps_i $ are
``infinitesimal elements of second order,'' and $*$ denotes
arbitrary entries.
\end{enumerate}
\end{exmpl}

The algebra of Example \ref{generic2}.\eq{G8} is represented in
$\M[3](F[\vareps_i])$. Since $\vareps_i$ is nilpotent of order 2,
$\vareps_i$ can be identified in $\M[2](F(\xi_i))$ (for $\xi_i$ a
commuting indeterminate over $F$) with $\left(\AR{0 & \xi
\\ 0 & 0}\right)$. Thus, Example \ref{generic2}.\eq{G8} has a faithful
representation of degree $6$ over a field. (In fact, one could get
a representation of degree $4$ using the argument of Example
\ref{generic2}.\eq{G7}.) When we increase the order of an
infinitesimal, we must increase the order of the faithful
representation (over a field) accordingly, since an infinitesimal
$\vareps$ of order $k$ is represented by the $k \times k$ matrix
$$\left(\AR{0 & \xi & 0 & \dots & 0 \\ 0 & 0 & \xi & \dots & 0 \\
\vdots & \vdots & \vdots & \ddots & \vdots \\ 0 & 0 & 0 & \dots &
\xi \\ 0 & 0 & 0 & \dots & 0}\right),$$ which is nilpotent of
order $k$. (Note that as the order of the representation
increases, so does the index of nilpotence of the radical.)

The generic description of partial gluing up to infinitesimals
becomes more complicated in the presence of a slight
generalization of Frobenius gluing, because we need to deal both
with the Frobenius automorphism and also with the degree of the
infinitesimal.

\begin{exmpl}\label{generic5} In order to introduce Frobenius gluing
of power $q$, up to infinitesimals, to Example
\ref{generic2}.\eq{G7}, we define generic triangular matrices $Y
_i = \left(\AR{\xi_{i1} & \xi_{i3}
\\ 0 & \xi_{i1}^q+ \vareps_i}\right)$ where each $\vareps_i
\vareps_j = 0;$ thus,
 we replace $C$ by
 the algebra
$$\bar C = F[\xi_{ij} : i \in I ,\ 1 \le j \le 3]/
\ideal{\xi_{i_1,1}^q-\xi_{i_1,3}}^2.$$ Stipulating the base field
$F$ to have order $q$
 requires extra relations of the form~
$(\xi_{ij}^{q}-\xi_{ij}).$

To describe the representation over a field, we must pass to
$$\bigg \{ \left(\begin{matrix} \a & * & * \\ 0 & \a ^q & * \\
0 & 0 & \a ^q \end{matrix}\right): \a \in F \bigg\}.$$
 The full quiver for this representation is the path
\vskip 0.2cm
\begin{equation}
\xymatrix@C=40pt{
I \ar@/^0pt/[r]
&
I^{(1)} \ar@/^0pt/[r]
&
I^{(1)}} .
\end{equation}
\vskip 0.6cm
\end{exmpl}

One  can also introduce Frobenius ``self-gluing'' of power $q$, up
to infinitesimals, by imposing the relation $(x^q-x)^\ell$ on a
vertex.

 All
examples of infinitesimal compression can be viewed in such a
manner, where the number of identifications at the end is the
order of the infinitesimal.

\subsection{Proportional gluing}

Off-diagonal gluing involves extra intricacies.  Here is an
example of a commutative algebra with non-identical gluing.

\begin{exmpl}\label{weight1}
Fix $\nu\in K$.
The algebra $\set{\left(\begin{matrix} \a & \beta & \gamma \\ 0 & \a & \nu \beta \\
0 & 0 & \a \end{matrix}\right) : \a, \beta, \gamma \in K}$ is a
\emph{commutative} algebra which is described by the full quiver
\vskip 0.2cm
\begin{equation}
\xymatrix@C=40pt{
I \ar@/^0pt/[r]^{\beta}
&
I \ar@/^0pt/[r] ^{\nu\beta}
&
I
}
\end{equation}
\vskip 0.6cm \noindent which denotes the relation $\lam_{21} =
\nu\lam_{32}$. Indeed, to verify commutativity, one may assume $\a
= 0,$ and checks that \begin{equation}
\left(\begin{matrix} 0& \beta & \gamma \\ 0 & 0 & \nu \beta \\
0 & 0 & 0 \end{matrix}\right)\left(\begin{matrix} 0 & \beta' & \gamma' \\ 0 & 0 & \nu \beta' \\
0 & 0 & 0 \end{matrix}\right) = \left(\begin{matrix} 0 & 0 & \nu \beta\beta' \\ 0 & 0 & 0 \\
0 & 0 & 0 \end{matrix}\right) = \left(\begin{matrix} 0 & \beta' & \gamma' \\ 0 & 0 & \nu \beta' \\
0 & 0 & 0 \end{matrix}\right)\left(\begin{matrix} 0 & \beta & \gamma \\ 0 & 0 & \nu \beta \\
0 & 0 & 0 \end{matrix}\right).
\end{equation}
\end{exmpl}

\begin{exmpl}\label{gluerad} We start with the paths
\begin{equation}\label{ex8.0}
\xymatrix@C=40pt{
\bullet \ar@/^0pt/[r]
&
\bullet \ar@/^0pt/[r]
&
\bullet \ar@/^0pt/[r]
&
\bullet }
\end{equation}
and
\begin{equation}\label{ex8.a}
\xymatrix@C=40pt{
I \ar@/^0pt/[r]
&
I \ar@/^0pt/[r]
&
I \ar@/^0pt/[r]
&
I }
\end{equation}

We recall that the full quiver of the diagram \eq{ex8.0}
corresponds to the algebra of upper triangular matrices; the full
quiver of the diagram \eq{ex8.a}, in which the blocks are glued,
corresponds to the algebra of unipotent upper triangular matrices
$$U_4 = \set{\(\begin{array}{cccc}
\alpha & * & * & *\\
0 & \alpha & * & *\\
0 & 0 & \alpha & *\\0& 0 & 0 & \alpha
\end{array}\) \suchthat \alpha \in F},$$
without radical gluing.
\vskip 0.0cm

Identical labels in the full quiver \eq{ex8.a} provide
identifications in the next layer of components, such as for the
algebra
$$U_4^+ = \set{\(\begin{array}{cccc}
\alpha & \beta & \gamma & \mu\\
0 & \alpha & \beta & \delta\\\
0 & 0 & \alpha & \beta \\
0& 0 & 0 & \alpha
\end{array}\) \suchthat \alpha, \beta, \gamma,\delta,\mu \in F},$$
whose full quiver is given in \eq{exJ2}.

In order to add the relation $\delta = \gamma $, we must bring the
non-primitive arrows back into the full quiver, which no longer
can be represented via  a single branch, as illustrated in diagram
\eq{exJ}.

But this can be compressed to a point. The corresponding algebra
$$B_4 = \set{\(\begin{array}{cccc}
\alpha & \beta & \gamma & \mu\\
0 & \alpha & \beta & \gamma\\
0 & 0 & \alpha & \beta \\
0& 0 & 0 & \alpha
\end{array}\) \suchthat \alpha, \beta, \gamma,\mu \in F}$$
is commutative, being isomorphic to the algebra
$F[\vareps]/\langle \vareps^4 \rangle$.

If instead we want the relation $\delta = \nu\gamma $, we get the
full quiver:

\begin{equation}\label{ex8.c1}
\xymatrix@C=40pt{
\comp[I]{1}{K} \ar@/^0pt/[r]|(0.5){\beta} \ar@/^14pt/[rr]|(0.5){\gamma}
&
\comp[I]{1}{K} \ar@/^14pt/[rr]|(0.25){\hole}|(0.51){\,\nu\gamma} \ar@/^0pt/[r]|(0.4){\beta }
&
\comp[I]{1}{K} \ar@/^0pt/[r]|(0.4){\beta}
&
\comp[I]{1}{K}},
\end{equation}

whose corresponding algebra is still commutative, since
$$\begin{aligned}\(\begin{array}{cccc}
0 & \beta & \gamma & \mu\\
0 & 0 & \beta & \nu\gamma\\
0 & 0 & 0 & \beta \\
0& 0 & 0 & 0
\end{array}\) & \(\begin{array}{cccc}
0 & \beta' & \gamma' & \nu\\
0 & 0 & \beta' & \nu\gamma'\\
0 & 0 & 0 & \beta' \\
0& 0 & 0 & 0
\end{array}\) = \(\begin{array}{cccc}
0 & 0 & \beta \beta' &\nu (\beta \gamma'+ \beta '\gamma )\\
0 & 0 & 0 & \beta \beta'\\
0 & 0 & 0 & 0\\
0& 0 & 0 & 0
\end{array}\) \\ & = \(\begin{array}{cccc}
0 & \beta' & \gamma' & \nu\\
0 & 0 & \beta' & \nu\gamma'\\
0 & 0 & 0 & \beta' \\
0& 0 & 0 & 0
\end{array}\)\(\begin{array}{cccc}
0 & \beta & \gamma & \mu\\
0 & 0 & \beta & \nu\gamma\\
0 & 0 & 0 & \beta \\
0& 0 & 0 & 0
\end{array}\).\end{aligned} $$
\end{exmpl}

\subsection{Gluing between branches -- Permuted gluing}\label{ss:54}
The theory becomes considerably more intricate when we need to
consider gluing between several different branches in the same
full quiver. The same kinds of gluing (identical, Frobenius,
proportional Frobenius) are involved.

\begin{defn}
Gluing between branches is called {\bf total}
if for each arrow in one branch there is an
arrow of the other branch, glued  proportionally to it.
\end{defn}
Total gluing need not involve a 1:1 correspondence of the arrows;
e.g.~the algebra  $$  \set{\(\begin{array}{ccccc}
\alpha & \beta & *&\gamma & *\\
0 & \alpha & \beta & 0 & *\\
0 & 0 & \alpha & 0 & \gamma \\
0& 0 & 0 & \alpha & \beta \\
0& 0 & 0 & 0 & \alpha
\end{array}\) \suchthat \alpha, \beta, \gamma \in F}$$
whose full quiver is
\begin{equation}\label{---}
\xymatrix@C=40pt{
\comp[I]{1}{K} \ar@/^0pt/[r]|(0.5){\beta} \ar@/^14pt/[rrr]|(0.5){\gamma}
 \ar@/_9pt/[rr] \ar@/_18pt/[rrr]
&
\comp[I]{1}{K} \ar@/^0pt/[r]|(0.4){\beta } \ar@/_18pt/|(0.125){\hole}|(0.333){\hole}[rrr]
&
\comp[I]{1}{K} \ar@/^14pt/[rr]|(0.20){\hole}|(0.51){\,\gamma}
&
\comp[I]{1}{K} \ar@/^0pt/[r]|(0.4){\beta}
&
\comp[I]{1}{K}
};
\end{equation}
as usual undecorated arrows stand for non-glued entries.
\begin{defn}\label{permglu}
Total gluing between two branches is {\bf permuted} if it involves a
permutation of the arrows. (In other words, there is a graph
isomorphism which preserves the labels of vertices, and which also
preserves the labels of arrows up to multiplying by constants).

{\bf Degenerate gluing} is the special case of permuted gluing in
which one branch is glued identically, arrow for arrow, with
another branch (i.e., via the identity permutation)\end{defn}
Degenerate gluing is illustrated in the following quiver, for
fixed $\lambda,\lambda' \in F$:
\begin{equation}\label{E.G2}
\xymatrix@C=40pt@R=32pt{
\comp[I]{1}{K} \ar@/^0pt/[r]^{\alpha} \ar@/_0pt/[d]^{\lambda \alpha}
&
\comp[\II]{1}{K} \ar@/^0pt/[d]^{\beta}
\\
\comp[\II]{1}{K} \ar@/^0pt/[r]^{\lambda'\beta}
&
\comp[\III]{1}{K}
}
\end{equation}

\begin{lem}\label{permut2}
Any full quiver is a union of full quivers with total gluing.
\end{lem}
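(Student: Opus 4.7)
The plan is to proceed by induction on the number of ``partially glued'' pairs of branches in $\Gamma$, where I call a pair $(B,B')$ of branches \emph{partially glued} if some arrow of $B$ is proportionally glued to some arrow of $B'$ but the gluing between $B$ and $B'$ is not total in the sense of \Dref{permglu}. If no such pair exists, then every pair of branches with non-trivial mutual gluing is totally glued, so $\Gamma$ is itself a full quiver with total gluing and the lemma holds with the trivial single-term union $\Gamma = \Gamma$.

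For the inductive step, I pick a partially glued pair $(B,B')$ and split $\Gamma$ into two sub-quivers. Let $\Gamma_1$ be obtained from $\Gamma$ by deleting (only) the gluing relations between arrows of $B$ and arrows of $B'$, so that in $\Gamma_1$ these two branches carry no mutual gluing. Let $\Gamma_2$ be the sub-quiver supported on the vertices and arrows of $B$ and $B'$ that actually participate in the $B$-$B'$ gluing, together with those gluing relations (and the vertex-gluing inherited from $\Gamma$ between corresponding endpoints, which is forced by \Pref{p2p}); by construction $B$ and $B'$ become totally glued inside $\Gamma_2$. Then $\Gamma = \Gamma_1 \cup \Gamma_2$, the number of partially glued pairs in $\Gamma_1$ is strictly smaller than in $\Gamma$, and $\Gamma_2$ already has total gluing. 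Applying the induction hypothesis to $\Gamma_1$ gives a decomposition $\Gamma_1 = \bigcup_i \Gamma_1^{(i)}$ into totally glued sub-quivers, and concatenating with $\Gamma_2$ yields the desired union.

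The main obstacle is verifying that each $\Gamma_1$ and $\Gamma_2$ produced in this process is actually the full quiver of some representation (rather than merely an abstract decorated graph), and that ``branch'' remains a coherent notion after removing arrows or gluing relations. Concretely, one must check that erasing a single gluing relation from $\Gamma$ corresponds, on the algebra side, to passing to the subalgebra obtained by unglueing the two radical blocks (i.e.\ replacing a single block by two proportional copies), which is legitimate because proportionality is preserved; dually, restricting to the glued portions in $\Gamma_2$ corresponds to projecting onto the sub-bimodule spanned by the glued arrows, which is a valid sub-representation. Tracking the relative Frobenius exponents in \Rref{offdiag} and \Rref{unglu} during these operations ensures that vertex-gluing is respected throughout, so the induction is well-founded and the union in the end reproduces $\Gamma$ with all its original gluing data.
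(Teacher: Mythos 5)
The paper's proof is a two-sentence observation: declare two branches equivalent when they have permuted gluing between them, note that this is an equivalence relation (reflexive via the identity permutation, symmetric via inverses, transitive because proportional gluing composes), and take the partition of branches into equivalence classes as the decomposition. You miss this entirely and substitute a more elaborate inductive ``ungluing'' argument, so the two proofs are genuinely different in method, not merely in presentation.

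Your argument has substantive gaps. First, the base case is wrong as stated: the absence of \emph{partially} glued pairs does not make $\Gamma$ ``a full quiver with total gluing,'' because two branches with \emph{no} mutual gluing at all are not totally glued either. You would still need to distribute such branches into separate pieces of the union --- exactly what the paper's equivalence classes accomplish automatically, and what your induction never addresses. Second, the object $\Gamma_1$ you construct by erasing the $B$--$B'$ gluing relations is not a sub-quiver of $\Gamma$: it is a different decorated graph corresponding to a strictly larger algebra (one with fewer polynomial relations). You acknowledge this, but then you need to justify that deleting these specific gluing relations, while retaining everything else, still yields a well-formed full quiver; this is delicate because proportional gluing is transitive (\Rref{offdiag}, \Rref{unglu}), and deleting one link in a chain of gluings may implicitly sever or leave dangling other gluings that were obtained by transitivity. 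Third, the ``glued stubs'' forming $\Gamma_2$ need not be contiguous paths inside $\Gamma_2$, so calling them branches of $\Gamma_2$ --- and then invoking \Dref{permglu}, which is a statement about branches --- requires verification you do not supply. None of this arises in the paper's argument, which never modifies the quiver at all: it simply groups the existing branches by the permuted-gluing equivalence and observes that each group, taken together with all of its internal gluing data, has total gluing. That one-step partition is both shorter and avoids the re-representation issues your ungluing strategy creates.
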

\begin{proof}
We  say that two branches are equivalent if they have permuted
gluing between them. This is obviously an equivalence, so we just
partition the branches of the full quiver by their equivalence
classes.
\end{proof}

When dealing with degenerate gluing of paths, there are two separate cases to consider.
\begin{exmpl}
Consider the matrix representation of the full quiver
\eqref{E.G2}, where the components form the diagonal blocks of
$\M[4](K)$ in the natural manner. According to the given
proportional gluing, the radical is generated by $e_{12}+\lambda
e_{13}$ and $e_{24}+\lambda'e_{34}$, whose product is
$(1+\lambda\lambda')e_{14}$. If $1+\lambda\lambda' \neq 0$, then
the algebra is isomorphic to the one represented via  the full
quiver $I \ra \II \ra \III$. Otherwise, there is no map from $I$
to $\III$, and the algebra can be represented via  the
glue-connected full quiver $I\ra \II, \, \II \ra \III$.
\end{exmpl}

Using generic commuting elements to expand the base ring, one can
remove all degenerate gluing in a more systematic manner.

\begin{rem}\label{remivedeg}
One can transform degenerate gluing of paths in a full quiver into
a single path (at the expense of passing to algebras over
commutative affine algebras which are not necessarily fields).
Namely: if the products of the primitive arrows sum up to a
non-zero map, replace the glued paths by a single branch having
the same attributions. Otherwise, if the sum is zero, attach a
formal commuting indeterminate $\xi_i$ to each gluing class of
arrows, and define the appropriate relations on the commutative
polynomial algebra $F[\xi]$ to obtain a commutative affine algebra
for which the degeneration is described in terms of a single branch.

For example, for the full quiver given in \eqref{E.G2} we replace
$\a$ by $\xi_1\a$ and $\beta$ by $\xi_2\beta,$ and let $C =
F[\xi_1, \xi_2]/\langle \xi_1 \xi_2\rangle$.
Then we can represent our algebra over $C$, and it is intuitively clear  that the full quiver is the single path
\begin{equation}
\xymatrix@C=40pt{
\comp[I]{1}{K} \ar@/^0pt/[r]^{\xi_1\alpha}
&
\comp[\II]{1}{K} \ar@/^0pt/[r]^{\xi_2\beta}
&
\comp[\III]{1}{K}
},
\end{equation}
i.e. $\set{\(\begin{array}{ccc} C & \xi_1C & 0 \\ 0 & C & \xi_2C \\ 0 & 0 & C \end{array}\)}$.

Conversely, suppose our algebra is representable over a
commutative affine $F$-algebra~$C$. Since $C$ is representable we
can represent our algebra over $F$, but at the expense of a more
complicated full quiver.
\end{rem}

We turn to examples of permuted gluing:

\begin{exmpl}\label{Inf2}
The branches $$I \to I \to \II \to I, \qquad I \to \II \to I \to
I$$ are not glued, since the arrows are not labelled. On the other
hand, the gluing between the branches
\begin{equation}
\xymatrix@C=30pt{
I \ar@/^0pt/[r]^{\alpha}
&
I \ar@/^0pt/[r]^{\beta}
&
\II \ar@/^0pt/[r]^{\gamma}
&
I {}
},
\end{equation}
\begin{equation}
\xymatrix@C=30pt{
I \ar@/^0pt/[r]^{\beta}
&
\II \ar@/^0pt/[r]^{\gamma}
&
I \ar@/^0pt/[r]^{\alpha}
&
I {}
}
\end{equation}

is permuted.

Likewise, the gluing between the branches
\begin{equation}
\xymatrix@C=30pt{
I \ar@/^0pt/[r]^{\alpha}
&
\II \ar@/^0pt/[r]^{\beta}
&
I \ar@/^0pt/[r]^{\gamma}
&
\II {}
},
\end{equation}
\begin{equation}
\xymatrix@C=30pt{
I \ar@/^0pt/[r]^{\gamma}
&
\II \ar@/^0pt/[r]^{\beta}
&
I \ar@/^0pt/[r]^{\alpha}
&
\II {}
}
\end{equation}
is permuted.
\end{exmpl}

Note that the presence of permuted gluing dictates certain gluing
of vertices (namely, the vertices corresponding to the glued
arrows). We say that permuted gluing is {\bf complete} if each
arrow is glued to an initial arrow of (at least one) branch.
(Here, by initial arrow, we mean the first arrow in the branch.)

We can characterize the full quiver with complete gluing. In order
to avoid the trivial situation of a single arrow (so that the full
quiver has length 1), we restrict our attention to length at least
2.

\begin{prop}\label{completeglue}
Any full quiver (of length $>1$) with complete gluing of $m$
primitive arrows is a sub-quiver of a cube, of dimension $k \geq
2$, in which all the vertices are glued. For example, if there are
three primitive arrows, the full quiver has the form \vskip 0.2cm
\begin{equation}\label{ex8+11}
\xymatrix@C=24pt@R=24pt{
    {}
    & I \ar[rr]|(0.5){\alpha} \ar[dd]|(0.25){\,\gamma\,}|(0.501)\hole \ar[dl]|(0.5){\beta}
    & {}
    & I \ar[dd]|(0.5){\nu_{3}\gamma} \ar[dl]|(0.5){\nu_{2}\beta}
\\
    I \ar[rr]|(0.25){\nu_{1}\alpha} \ar[dd]|(0.5){\nu_{3}'\gamma}
    & {}
    & I \ar[dd]|(0.3){\nu_{3}''\gamma}
    & {}
\\
    {}
    & I \ar[rr]|(0.25){\nu_{1}'\alpha}|(0.5)\hole \ar[dl]|(0.45){\nu_{2}'\beta}
    & {}
    & I \ar[dl]|(0.5){\nu_{2}''\beta}
\\
    I \ar[rr]|(0.5){\nu_{1}''\alpha}
    & {}
    & I
    & {}
}
\end{equation}

\end{prop}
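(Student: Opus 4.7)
The goal is to construct a one-to-one morphism of full quivers $\phi \colon \Gamma \to C_m$, where $C_m$ denotes the $m$-cube: its vertex set is $\{0,1\}^m$, all vertices carry the same glued label $I$, and it has an arrow of ``direction~$i$'' from $v$ to $v+e_i$ whenever $v_i = 0$. The $m$ coordinate directions of $C_m$ will correspond to the $m$ gluing classes of primitive arrows of $\Gamma$, which we number $1, \ldots, m$. Since the full quiver has length $>1$ we have $m \geq 2$, hence $k := m \geq 2$.

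The key lemma behind the construction is that every branch of $\Gamma$ has length exactly $m$ and traverses each of the $m$ classes exactly once. Permuted gluing between any two branches already forces all branches to have the same length $\ell$ and the same multiset of gluing classes, since the permutation preserves the class of every arrow. Completeness ensures that each of the $m$ classes appears as the initial arrow of some branch, hence appears in the common multiset, so $\ell \geq m$. To show $\ell = m$, one must rule out a class appearing more than once in a single branch: applying Proposition~\ref{p2p} to two arrows in the same class within one branch forces their respective sources and targets to be identically glued (with matching Frobenius twists, by Remark~\ref{unglu}), and propagating this constraint along the permuted-glued family of branches, together with acyclicity of the full quiver, produces a contradiction.

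Given the lemma, pick a source vertex $v_0$ (a vertex with no incoming arrow lying on some branch), set $\phi(v_0) = (0,\ldots,0)$, and for any other vertex $v$ define $\phi(v) \in \{0,1\}^m$ by declaring $\phi(v)_i = 1$ iff class $i$ occurs on some directed path from $v_0$ to $v$. The lemma, applied to extensions of paths to full branches, shows that ``some'' can be replaced by ``every,'' so $\phi$ is well defined. By construction each arrow $v \to w$ of class $i$ satisfies $\phi(w) = \phi(v) + e_i$ with $\phi(v)_i = 0$, which is precisely the cube's arrow structure; injectivity on vertices follows because two distinct vertices with the same coordinate would be connected by a directed path using no arrows, forcing them to coincide. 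That every vertex in the image of $\phi$ carries the label $I$ follows by iterated application of Proposition~\ref{p2p}, which propagates the gluing of diagonal blocks through the permuted-glued structure to all vertices reachable from $v_0$.

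The principal obstacle is the key lemma. Complete permuted gluing is a global symmetry on the family of branches, whereas Proposition~\ref{p2p} is local to individual pairs of glued arrows; marshaling these into a proof that no branch repeats a class requires careful bookkeeping of both the vertex identifications and the Frobenius exponents (tracked by Remark~\ref{unglu}), and is the step where one expects the Wedderburn block form of the representation to be used most heavily.
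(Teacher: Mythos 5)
The paper's proof is very brief: it first deduces (from completeness and length $>1$) that every vertex is glued to the same label $I$, using essentially the propagation you describe via Proposition~\ref{p2p}; it then simply observes that ``all possible branches can be described inside the cube'' and appeals to an induction by projecting onto faces, without constructing the embedding explicitly. Your plan, by contrast, attempts an explicit coordinate map $\phi \colon \Gamma \to \{0,1\}^m$, which is a genuinely more constructive route and would give a sharper statement ($k = m$) than the proposition asserts ($k \geq 2$).

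The trouble is precisely the ``key lemma'' you flag as the principal obstacle, and I believe it is not just unproven but false as stated. Consider a glue-connected full quiver with two branches of length $3$, with identical gluing of arrows $\alpha$ and of arrows $\beta$, arranged as $\alpha\alpha\beta$ on one branch and $\beta\alpha\alpha$ on the other. The two branches have the same multiset of gluing classes, so one can exhibit a bijection of arrows respecting the classes (permuted gluing), and each of $\alpha,\beta$ is glued to an initial arrow, so the gluing is complete. Propagating Proposition~\ref{p2p} along the repeated $\alpha$'s identifies all eight vertices to the label $I$, but acyclicity is not violated -- the vertices are glued, not equal -- so no contradiction is obtained. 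One obtains branches of length $3$ using only $m = 2$ classes; the ``glued triangle'' $I(\ell)$ of Example~\ref{compress} similarly has $m = 1$ class appearing $\ell-1$ times and satisfies the complete-gluing condition vacuously. So the step ``ruling out a class appearing more than once in a branch'' cannot go through as you sketch it: repeating a class forces vertex gluing, which the full quiver tolerates freely. In addition, the injectivity argument (``two distinct vertices with the same coordinate would be connected by a directed path'') does not hold -- two unrelated vertices can record the same visited-class set without any path between them (for instance, two distinct targets of $\alpha$-arrows emanating from the same source). The paper's own argument avoids your lemma because it only needs to place arrows of $\Gamma$ inside the cube's glued arrow classes, not to bound branch length by the number of classes; its weakness is that the inductive face-projection argument is left to the reader with essentially no detail. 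To salvage your approach you would need either to weaken the target statement to the paper's (some $k$-cube with suitable gluing, not necessarily $k=m$) and replace the branch-length lemma with a case analysis handling repeated classes via the cube's own gluing, or to add an explicit hypothesis that the gluing be ``reduced'' in the sense that no branch repeats a class.
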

\begin{proof} First of all, the assumption of complete gluing
means that every arrow starts with a vertex labelled~I, which
means that all vertices except the last are glued to a vertex
labelled~I. But then the hypothesis of length $>1$ implies the
last vertex also is glued to the vertex ~I, since its arrow is
glued to an arrow $I \to I.$

It remains to observe that all possible branches can be described
inside the cube such that each arrow from the initial vertex is
labelled $\a, \beta, \gamma,$ etc., and all other arrows are
proportionally glued. (This can be seen inductively by projecting
onto a face of the cube.)
\end{proof}

\begin{exmpl}\label{Grass}
Perhaps the most well-known example of permuted gluing is given by
the Grassmann algebra $G = \sum _{i=1}^n Fe_i$ on $n$ generators,
defined by the relations $e_i^2 = 0$ and $e_i e_j = -e_j e_i$. $G$
has no nontrivial idempotents, so all vertices are glued. We start
with two generators: \vskip 0.0cm
\begin{equation}\label{G2+}
\xymatrix@C=40pt@R=32pt{
I \ar@/^0pt/[r]^{\alpha} \ar@/_0pt/[d]^{\beta}
&
I \ar@/^0pt/[d]^{-\beta}
\\
I \ar@/^0pt/[r]^{\alpha}
&
I
}
\end{equation}

Likewise, one has the full quiver of the Grassmann algebra without
1, on two generators: \vskip 0.0cm
\begin{equation}\label{G2.1}
\xymatrix@C=40pt@R=32pt{
\circ \ar@/^0pt/[r]^{\alpha} \ar@/_0pt/[d]^{\beta}
&
\circ \ar@/^0pt/[d]^{-\beta}
\\
\circ \ar@/^0pt/[r]^{\alpha}
&
\circ
}
\end{equation}

For the variety determined by the $3$-generated Grassmann algebra
(with unit element), the full quiver is the following cube:

\vskip 0.2cm
\begin{equation}\label{ex8+11.2}
\xymatrix@C=18pt@R=18pt{
{} & I \ar[rr]^{\alpha} \ar[dd]|(0.5)\hole_(0.7){\gamma} \ar[dl]_{\beta} & {} & I \ar[dd]^{-\gamma} \ar[dl]_(0.6){-\beta}
\\
I \ar[rr]^(0.7){\alpha} \ar[dd]_{-\gamma} & {} & I \ar[dd]_(0.7){\gamma} & {}
\\
{} & I \ar[rr]|(0.5)\hole^(0.7){\alpha} \ar[dl]_(0.4){\beta} & {} & I \ar[dl]^{-\beta}
\\
I \ar[rr]^{\alpha} & {} & I & {}
}
\end{equation}

Analogously, the full quiver for the $m$-generated Grassmann
algebra is the corresponding $m$-dimensional unit cube, with the
analogous permuted gluing. (Although the full quiver can be nicely
projected onto the plane for $m =3$, this becomes impossible for
larger $m$.)
\end{exmpl}

\medskip

Here is an interesting related example.
\begin{exmpl}
Let us consider the full quiver of the Grassmann envelope $E$ of
the superalgebra $A = {\mathbb M}_{n,k}$, which is defined as $E = G_0 \!
\otimes\! A_0 \ \oplus\ G_1\! \otimes\! A_1,$ where the Grassmann
algebra $G = G_0 \oplus G_1$, and $A_0$ is the sum of the diagonal
$n\times n$ and $k \times k$ blocks, whereas $A_1$ is the sum of
the off-diagonal $n\times k$ and $k \times n$ blocks. The full
quiver of the $m$-generated Grassmann envelope of the superalgebra
${\mathbb M}_{n,k}$ looks like two copies of the double of the
full quiver of $G$, and may be described as follows:

Ordered pairs of vertices denoting blocks of size $n$ and $k$
respectively are located at each corner of the $m$-dimensional
cube. We color the first component of each pair as red, and the
second component as blue. In other words, the red components
correspond to blocks of size $n$ and are glued among themselves,
whereas the blue components correspond to blocks of size $k$, and
are glued among themselves. These vertices correspond to the
idempotents of $G_0 \otimes A_0.$

The arrows correspond to elements of $ G_1 \otimes A_1,$ and occur
in pairs (in analogy to the full quiver of $G$.) For each pair,
one arrow connects the red component of the first vertex to the
blue component of the second vertex, and another connects the blue
component of the first vertex to the red component of the second
vertex. Thus, there are two components, each of which looks like
$G$. Each component corresponds to the same variety, but this
description corresponds to the canonical representation of the
superalgebra.

For example when there are $m = 2$ generators, the full quiver of $A = {\mathbb M}_{n,k}$ is
\begin{equation}\label{ex8+11.3}
\xymatrix@C=24pt@R=24pt{
\comp[I]{n}{K} \ar[r]^{\alpha} \ar[d]_{-\gamma} & \comp[\II]{k}{K} \ar[d]^{\gamma}  &
{} & \comp[\II]{k}{K} \ar[r]^{\beta} \ar[d]_{\delta} & \comp[I]{n}{K} \ar[d]_{-\delta}
\\
\comp[\II]{k}{K} \ar[r]^{\alpha}  & \comp[I]{n}{K} & {} & \comp[I]{n}{K} \ar[r]^{\beta}  & \comp[\II]{k}{K}
}
\end{equation}
\end{exmpl}

\section{Improving the full quiver}

In this section, we introduce techniques which enable us to choose
the representation for which the full quiver has the ``best''
form. This quest, which is difficult enough when the base field is
infinite, requires special techniques when the base field is
finite.

\subsection{Irreducible varieties}

By {\bf quasi-linear} variety, we mean an affine variety defined
by quasi-linear equations. (Thus, for $F$ infinite, quasi-linear
varieties are defined by linear equations.)

\begin{lem}\label{prereq1}
Any quasi-linear variety $V$ over a finite field $F$ is the direct
sum of the connected component $V_0$ of $0$, which is irreducible,
and a space $V_1$ which is finite dimensional over $F$ and thus
finite.
\end{lem}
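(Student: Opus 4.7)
The plan is to diagonalize the system of defining $q$-polynomials via Smith normal form over a twisted polynomial ring, reducing to the one-variable case. By \Cref{addit}, since $F=\F_q$ is finite, the quasi-linear variety $V\sub K^n$ is cut out by finitely many $q$-polynomial equations, which I package as a single matrix $A\in \M[m\times n](K\{\phi\})$ over the twisted polynomial ring $K\{\phi\}$ with relation $\phi c=c^q\phi$. Elements of $K\{\phi\}$ act on $K$ via $(\sum c_j\phi^j)\cdot\alpha = \sum c_j \alpha^{q^j}$, so $V=\ker(A)$ inside $K^n$.

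The ring $K\{\phi\}$ is both left and right Euclidean (with $\phi$-degree as Euclidean function), hence a noncommutative principal ideal domain admitting a Smith normal form: there exist invertible $P\in\operatorname{GL}_m(K\{\phi\})$ and $Q\in\operatorname{GL}_n(K\{\phi\})$ with $PAQ=\operatorname{diag}(d_1,\dots,d_r,0,\dots,0)$, $d_i\ne 0$. Left multiplication by $P$ only replaces the defining equations by an equivalent system, leaving $V$ unchanged. Right multiplication by $Q$ implements a change of variable $v\mapsto Qv$ on $K^n$; since elementary row/column operations in $K\{\phi\}$ are realized by bijective $F$-linear morphisms of $K^n$ given by $q$-polynomials (with $q$-polynomial inverses coming from the inverse operations), $Q$ defines a bijective morphism of affine varieties $K^n\to K^n$. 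After this change of coordinates,
$$V\;\cong\;\ker(d_1)\times\cdots\times\ker(d_r)\times K^{n-r}.$$
Each $\ker(d_i)\sub K$ is the zero set of the nonzero ordinary polynomial $\sum c_{ij}\lam^{q^j}$, hence is finite; it is also an $F$-subspace since $d_i$ is additive and commutes with multiplication by $F$.

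Setting $\tilde V_0=\{0\}^r\times K^{n-r}$ and $\tilde V_1=\ker(d_1)\times\cdots\times\ker(d_r)\times\{0\}^{n-r}$ yields a direct sum decomposition $Q^{-1}(V)=\tilde V_0\oplus\tilde V_1$ as $F$-subspaces, with $\tilde V_0\cong K^{n-r}$ irreducible and $\tilde V_1$ finite; since $K$ is connected while each $\ker(d_i)$ is discrete, the connected components of $Q^{-1}(V)$ are indexed by $\tilde V_1$ and the one through $0$ is exactly $\tilde V_0$. Pulling back through $Q$ gives $V=V_0\oplus V_1$, with $V_0$ the irreducible connected component of~$0$ and $V_1$ a finite $F$-subspace, as required. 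The main delicate step is verifying that the Smith normal form operations really are $F$-linear bijective morphisms of $K^n$, so that $Q$ preserves $F$-linear direct sums and sends connected components to connected components; this amounts to checking explicitly that elementary operations $c\mapsto c+rc'$ (for $r\in K\{\phi\}$) and swaps are bijective $q$-polynomial maps with $q$-polynomial inverses.
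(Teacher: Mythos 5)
Your proof is correct, but it takes a genuinely different route from the paper's.  The paper's argument is essentially group-theoretic and much shorter: since $V$ is an $F$-linear (hence additively closed) algebraic subgroup of $K^n$, its identity component $V_0$ is a closed $F$-subspace of finite index which is automatically irreducible (identity component of an algebraic group over $K=\bar K$), so $V/V_0$ is a finite $F$-vector space; lifting a basis of $V/V_0$ back to $V$ and taking its $F$-span gives a finite complement $V_1$ with $V=V_0\oplus V_1$.  You, by contrast, diagonalize the defining system of $q$-polynomials over the twisted polynomial ring $K\{\phi\}$ (left and right Euclidean because $K$, being algebraically closed, has surjective Frobenius), reducing to the product of one-variable kernels $\ker(d_i)$ (each a finite $F$-subspace of $K$) times an affine space $K^{n-r}$, and then transport the decomposition back along the $F$-linear $q$-polynomial isomorphism $Q$.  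Both arguments are sound; the paper's is more elementary and conceptual, while yours is more constructive and gives extra structural information (it exhibits $V_0$ as $F$-isomorphic to $K^{n-r}$ via a $q$-polynomial change of variable, and identifies $V_1$ with $\prod\ker(d_i)$).  Your approach leans on the diagonal-reduction theorem for noncommutative Euclidean domains and on the fact that $\operatorname{GL}_n(K\{\phi\})$ is generated by elementary matrices (so that $Q$ and $Q^{-1}$ really are $q$-polynomial maps); these are standard but would merit a citation (Ore/Jacobson) if you were to include this proof.  One small notational slip: after writing $PAQ=D$ you have $V=Q(\ker D)$, so the object you decompose is $Q^{-1}(V)=\ker D$, not $Q^{-1}(V)$ in the sense of a preimage by a different map; the mathematics is fine, only the wording is slightly off.
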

\begin{proof}
$V/V_0$ is a finite set, and also a vector space over $F$, and
thus has some finite base $\bar b_1, \dots, \bar b_k$, which we
lift back to $b_1, \dots, b_k \in V$. $V_1 = \sum Fb_i$ is
annihilated by $p = \cha F$, and is a subvariety since $V$ is
quasi-linear, so we have split $V$ as $V_0\oplus V_1$.\end{proof}

We also need the following well-known fact from affine geometry:

\begin{rem}
Suppose $V$ is an irreducible affine variety. Then the coordinate
algebra $\mathcal O$ is the projective limit
$\underleftarrow{\lim} {\mathcal O}_s/I_s^n$ where $\mathcal O_s$
is the local ring of coordinate functions, and $I_s$ is the ideal
of functions vanishing at $s$, where $s$ is any fixed point of
$V$.
\end{rem}

In order to treat all irreducible varieties at once, we work in
the set of formal power series $F[[\la _1, \dots, \la _m]]$ over a
field $F$ with $|F|=q$.

\begin{defn}
A {\bf $q$-power series} is a power series $f(\la_1, \dots,
\la_m)$ in finitely many commuting
 indeterminates $\la_1, \dots, \la_m,$ with $f$ having constant term 0,
and all of whose finite truncations are $q$-polynomials. The
$q$-power series $f$ has {\it order 1} in $\la_i$ if some monomial
of $f$ has degree $1$ in $\la_i$. We let $H$ denote the subset of
$F[[\la _1, \dots, \la _m]]$ consisting of all $q$-power series.
\end{defn}

By definition
$$H = \left\{ \sum _{i=1}^m \sum _{j=0}^\infty
\a_{ij} \la_i^{q^{j}} \suchthat \a_{ij} \in F\right\},$$ but, for
example, $\la_1\la_2 \notin H$. Thus, $f \in H$ when each monomial
of $f$ is a $q^t$-power of some $\la_i$ (which may vary according
to the choice of monomial); $f$ has order $1$ in $\la_i$ when some
$\a_{i0}\ne 0$.

\begin{rem}\label{reducts}
\begin{enumerate}
\item\label{red1} If $f(\la_1, \dots, \la_m), g(\la_1, \dots, \la_m) \in H$,
with $g$ of order $i$ in $\la_i$, then $$f(\la_1, \dots,\la
_{i-1},g(\la_1, \dots, \la_m), \la _{i+1}, \dots, \la_m)\in H.$$
(In other words, $H$ is closed under composition of power series,
as are linear mappings.) Indeed, writing $g(\la_1, \dots, \la_m) =
\sum _{i=1}^m \sum _{j=0}^\infty \beta_{ij} \la_i^{q^{j}},$ we see
that for any $q$-power $q'$,
$$g(\la_1, \dots, \la_m)^{q'} = \sum _{i=1}^m \sum _{j=0}^\infty
\beta_{ij} \la_i^{q^{j}q'},$$ since $|F| = q$ and $\cha F = p$.

\item\label{red2} If $g = \sum _{i=1}^m \sum _{j=0}^\infty \a_{ij}
\la_i^{q^{j}}\in H$ and we take $ \a_{iu}\ne 0$ minimal among all
the nonzero coefficients of $g$, then $$g^{q^{-u}} = \sum _{i=1}^m
\sum _{j=0}^\infty \a_{ij} \la_i^{q^{j-u}},$$ which has order $1$ in
$\la_i$. Thus, we can always apply \eq{red1} with $g^{q^{-u}}$.
\end{enumerate}
\end{rem}

The following result is the key to studying relations arising from
$q$-polynomials. We say that variables are \textbf{independent} if
there are no weak Frobenius relation among them.

\begin{prop}\label{proppow}
Suppose $V_0$ is an irreducible quasi-linear affine variety
(containing $0$). Then there exist independent variables
$\la_{i_1}, \dots, \la_{i_s}$, such that all other $\la_i$ can be
expressed in terms of $q$-power series in $\la_{i_1}, \dots,
\la_{i_s}$.

(This generalizes the following fact: In any system of linear
equations, one can renumber the indices such that $\la_1, \dots,
\la_s$ are independent, and the other $\la _i$ are linearly
spanned by $\la_1, \dots, \la_s$.)
\end{prop}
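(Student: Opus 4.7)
The plan is to induct on $m$, at each step using one defining relation to express one variable as a $q$-power series in the others. By \Cref{addit}, $\pol(V_0)$ is a finitely generated $F[\phi]$-module of $q$-polynomials with zero constant term. If it is zero, all $\la_j$ are already independent and we are done; otherwise, pick a nonzero generator $g$ and let $u$ be the smallest $q^j$-exponent appearing with nonzero coefficient in $g$. By \Rref{reducts}\eq{red2}, $g^{q^{-u}}$ is a genuine $q$-polynomial of order $1$ in some $\la_i$; solving yields an identity $\la_i = h(\la_i,\la_1,\dots,\widehat{\la_i},\dots,\la_m)$ in which every $\la_i$-term of $h$ has exponent $\ge q$.

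To eliminate $\la_i$ from the right-hand side, define $\la_i^{(0)} = 0$ and iterate $\la_i^{(k+1)} = h(\la_i^{(k)},\la_1,\dots,\widehat{\la_i},\dots,\la_m)$. Using the characteristic-$p$ identity $(a-b)^{q^j} = a^{q^j}-b^{q^j}$ together with the fact that every exponent of $\la_i$ in $h$ is at least $q\ge 2$, the minimum degree (in the remaining variables) of $\la_i^{(k+1)}-\la_i^{(k)}$ is at least $q$ times that of $\la_i^{(k)}-\la_i^{(k-1)}$, hence at least $q^k$. The sequence therefore converges in the $(\la_1,\dots,\widehat{\la_i},\dots,\la_m)$-adic topology to a formal series $\widetilde h$, and since each iterate lies in $H$ by \Rref{reducts}\eq{red1}, so does the limit. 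Thus $\la_i = \widetilde h(\la_1,\dots,\widehat{\la_i},\dots,\la_m)$ is a $q$-power series identity on $V_0$.

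Now project $V_0$ onto the remaining $m-1$ coordinates. The image is an irreducible quasi-linear variety: projecting a connected additive subgroup by a coordinate map produces a connected additive subgroup, which by \Cref{addit} is cut out by $q$-polynomials. Applying the inductive hypothesis, its variables split into independent ones $\la_{i_1},\dots,\la_{i_s}$ and dependent ones, the latter each expressible as $q$-power series in the independent ones. A final application of \Rref{reducts}\eq{red1} lets us substitute these expressions into $\widetilde h$, yielding $\la_i$ as a $q$-power series in $\la_{i_1},\dots,\la_{i_s}$; back-substitution through the whole induction handles every remaining coordinate.

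The main technical hurdle is the convergence-and-substitution bookkeeping: verifying that each iterate stays in $H$, that the limit $\widetilde h$ actually vanishes on $V_0$ (not just formally), and that the projected variety is again in the scope of \Cref{addit} so the induction can proceed. A secondary point of care is the legitimacy of the inverse Frobenius $g \mapsto g^{q^{-u}}$, which works precisely because $u$ is the \emph{global} minimum exponent appearing in $g$ and because Frobenius acts as an automorphism on the coefficient field $F$ of order $q$; if instead $u$ were just the minimum exponent in the $\la_i$-slot, this step would fail, which is why we have no control over \emph{which} variable $\la_i$ gets eliminated at each stage — but that is harmless, since the proposition asks only for the \emph{existence} of an independent set.
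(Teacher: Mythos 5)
Your proposal follows the same iterative-elimination strategy as the paper: normalize via $g^{q^{-u}}$ using \Rref{reducts}.\eq{red2} to get an order-$1$ relation in some $\la_i$, then repeatedly substitute the right-hand side for $\la_i$ so that its degree is pushed up at each step, converging (in the completion of the coordinate ring, exactly the point the paper addresses with the projective-limit remark preceding the proposition) to a $q$-power series in the remaining variables. Your added bookkeeping — the characteristic-$p$ identity $(a-b)^{q^j}=a^{q^j}-b^{q^j}$ to control the $\mathfrak m$-adic distance between consecutive iterates, and the explicit induction on $m$ with projection to the remaining coordinates — makes the convergence estimate more precise but does not change the method, and the hurdles you flag at the end (the limit vanishing on $V_0$, not just formally, and the role of irreducibility) are exactly the ones the paper itself remarks on when noting that the argument operates in $\mathcal{O}=F(V_0)$ rather than in $H$.
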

\begin{proof}
When $\cha F = 0$, then ``quasi-linear'' means ``linear'' and
there is nothing to prove. Thus, we may assume that $\cha F = p
>0,$ and need to show that whenever we have a power series
equation involving indeterminates $\la_i$ expressed as an element
of $H$, that we can solve some indeterminate in terms of the
others. In view of Remark~ \ref{reducts}.\eq{red2}, we may assume
that this has order $1$ in some $\la_i$. Thus we have
$$ \sum _{i=1}^m \sum _{j=0}^\infty \a_{ij} \la_j^{q^{j}} = 0;$$
with $\a_{i,0}\ne 0;$ dividing through by $\a_{i,0},$ we may
assume that $\a_{i,0} = 1$. To ease notation, we assume that $i =
1$. Now we have $\la_1= \sum _{j=1}^\infty \a_{j} \la_1^{q^{j}}
+\sum _{i=2}^m \sum _{j=0}^\infty \a_{ij} \la_j^{q^{j}}$. Thus
substituting the right side for $\la_1$ yields another $q$-power
series in which we have increased the degree of $\la_1$ (and
furthermore have not produced new monomials of lower total
degree). Each stage produces a new power series in which the
degree of $\la_1$ in the monomials keeps increasing, so continuing
indefinitely yields an element of $\mathcal{O} = F(V_0)$ in which
$\la_1$ no longer appears, as desired.
\end{proof}

Note that the final argument does not apply in $H$ itself, as
$\lam_1$ cannot be eliminated from the  relation $\lam_1 =
\lam_1^q$; indeed, this relation defines a reducible variety.

\subsection{Reduction to proportional Frobenius gluing}

In general, any type of gluing can occur in a representation.
Indeed let $S$ be any set of $q$-polynomials on the $m^2$
variables $\Lambda = (\lam_{ij})_{i, j = 1,\dots,m}$. Then $B_S =
\set{\smat{\alpha I}{\Lambda}{0}{\alpha I} \suchthat \alpha \in F,\,
\forall f\in S\!: f(\Lambda) = 0}$ is a Zariski closed algebra whose
radical satisfies the relations in $S$.

We aim for the result that one can always find a representation of
a \Zcd\ algebra (over a suitable field extension) whose
off-diagonal gluing is a consequence of proportional Frobenius
gluing. This only seems to be accessible when the base field $F$
is infinite, so we start with this case; afterwards, we obtain the
analogous result for arbitrary $F$ in case the algebra is
relatively free.

\begin{rem}
As explained in \Ssref{ss:Zc}, the relations on any semisimple
subalgebra are of $F$-Frobenius type.
\end{rem}

\subsubsection{The case when the base field $F$ is infinite}

When $F$ is infinite,  the relations are linear. Then we may
partition the arrows to independent variables, in terms of which
all the other variables can be expressed. Suppose $\gamma$ is a
dependent arrow, satisfying a relation of the form $\gamma =
\sum_{i = 1}^{k}\theta_i \alpha_i$, where $\alpha_i$ are
independent, and $\theta_i \in F$.

Replace the arrow $\gamma$ by $k$ arrows, identically gluing the
$k$ initial vertices and (separately) the $k$ terminal vertices,
as well as the resulting $k$ copies of any incoming or outgoing
arrow from these vertices. The new arrows become
$\gamma_1,\dots,\gamma_k$, with the relations $\gamma_i = \theta_i
\alpha_i$, so we only have proportional Frobenius gluing.

If the arrow $\gamma$ in the original full quiver shares no vertex
with other arrows representing the $\gamma_i$, then the new full
quiver represents the same algebra as the original one. However in
general the procedure does not work, as the following example
illustrates.

\begin{exmpl}\label{prop-zation1}
The path \begin{equation}\label{38}
\xymatrix@C=30pt{
\comp[I]{1}{K} \ar@/^0pt/[r]|(0.5){\alpha}
&
\comp[I]{1}{K} \ar@/^0pt/[r]|(0.5){\beta}
&
\comp[I]{1}{K} \ar@/^0pt/[r]|(0.5){\gamma}
&
\comp[I]{1}{K}
}
\end{equation}
with the linear relation $ \alpha = \beta+ \gamma$ could be
expanded to
\begin{equation}
\xymatrix@C=30pt@R=2pt{
\comp[I]{1}{K} \ar@/^0pt/[r]|(0.5){\beta}
&
\comp[I]{1}{K} \ar@/^0pt/[r]|(0.5){\beta}
&
\comp[I]{1}{K} \ar@/^0pt/[r]|(0.5){\gamma}
&
\comp[I]{1}{K}
\\
\\
\comp[I]{1}{K} \ar@/^0pt/[r]|(0.5){\beta}
&
\comp[I]{1}{K} \ar@/^0pt/[r]|(0.5){\beta}
&
\comp[I]{1}{K} \ar@/^0pt/[r]|(0.5){\gamma}
&
\comp[I]{1}{K} }
\end{equation}
or to
\begin{equation}
\xymatrix@C=30pt@R=2pt{
{}
&
\comp[I]{1}{K} \ar@/^0pt/[r]|(0.5){\beta}
&
\comp[I]{1}{K} \ar@/^0pt/[r]|(0.5){\gamma}
&
\comp[I]{1}{K}
\\
\comp[I]{1}{K} \ar@/^0pt/[ru]|(0.5){\beta} \ar[rd]|(0.5){\gamma} & {} & {} & {}
\\
{}
&
\comp[I]{1}{K} \ar@/^0pt/[r]|(0.5){\beta}
&
\comp[I]{1}{K} \ar@/^0pt/[r]|(0.5){\gamma}
&
\comp[I]{1}{K} }
\end{equation}

However in the original algebra, the generators $b$ and $c$
corresponding to the arrows marked $\beta$ and $\gamma$,
respectively, satisfy the relation $b^2 = cb$; this relation does
not hold in the expanded algebras. The situation can be remedied
by further identification; the full quiver \eq{FQ38} does
represent the same algebra as the one in \eq{38}.

\begin{equation}\label{FQ38}
\xymatrix@C=30pt@R=2pt{
{}
&
\comp[I]{1}{K} \ar@/^0pt/[rd]|(0.5){\beta}
&
{}
&
{}
\\
\comp[I]{1}{K} \ar@/^0pt/[ru]|(0.5){\beta} \ar[rd]|(0.5){\gamma} & {} & \comp[I]{1}{K} \ar[r]|(0.5){\gamma} & \comp[I]{1}{K}
\\
{}
&
\comp[I]{1}{K} \ar@/^0pt/[ru]|(0.5){\beta}
&
{}
&
{} }
\end{equation}

However, consider the same quiver, with $\alpha$ and $\gamma$
taken to be the independent variables, and $\beta = \alpha -
\gamma$. The generators $a$ and $c$ corresponding to $\alpha$ and
$\gamma$ satisfy $a^2+ac+c^2 = 0$. This relation does not hold for
any intermediate expansion between \eq{38} and
\begin{equation}
\xymatrix@C=30pt@R=2pt{
\comp[I]{1}{K} \ar@/^0pt/[r]|(0.5){\alpha}
&
\comp[I]{1}{K} \ar@/^0pt/[r]|(0.5){\alpha}
&
\comp[I]{1}{K} \ar@/^0pt/[r]|(0.5){\gamma}
&
\comp[I]{1}{K}
\\
{}
\\
\comp[I]{1}{K} \ar@/^0pt/[r]|(0.5){\alpha}
&
\comp[I]{1}{K} \ar@/^0pt/[r]|(0.5){-\gamma}
&
\comp[I]{1}{K} \ar@/^0pt/[r]|(0.5){\gamma}
&
\comp[I]{1}{K} }
\end{equation}

\end{exmpl}

A general algorithm would be rather intricate to describe,
and is best done generically, motivating the proof of our next result.

\begin{thm}\label{propglu}
The \Zcr\ of any representable affine PI-algebra $A$ over an
infinite field $F$ has a representation and full quiver all
of whose polynomial relations  are consequences of
proportional gluing.
\end{thm}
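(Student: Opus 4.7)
The plan is to take any Wedderburn block representation of $\cl{A}$ and, by refining it through splittings that enlarge the representation degree, to eliminate every off-diagonal polynomial relation that is not a consequence of proportional gluing. First, by \Tref{zarcl1} together with \Tref{Frobglue}, realize $\cl{A}$ in Wedderburn block form; since $F$ is infinite, \Cref{addit} (with $q = 1$) guarantees that every polynomial relation is actually linear over $F$, so diagonal gluing is identical and the constraints within each off-diagonal block $B_{r,s}$ reduce to $F$-linear identities among the entries of its glued copies.

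Next, for each equivalence class of off-diagonal blocks, I would select a maximal independent family $\{\alpha_{i_1},\ldots,\alpha_{i_k}\}$ of entries; then every other entry satisfies a unique linear dependency $\gamma = \sum_{j=1}^{k}\theta_{j}\alpha_{i_j}$ with $\theta_j\in F$. Replace each such $\gamma$ by $k$ parallel arrows $\gamma_1,\ldots,\gamma_k$, each proportionally glued to $\alpha_{i_j}$ via the scalar $\theta_j$, and introduce $k$ identically-glued copies of the initial and terminal vertices of $\gamma$ to host these arrows. After this surgery is performed for every dependent entry, every surviving relation reads ``one entry equals a scalar multiple of another,'' i.e.\ is a consequence of proportional gluing.

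The main obstacle, already visible in \Eref{prop-zation1}, is that the naive splitting can destroy multiplicative relations: when $\gamma$ composes with a neighboring arrow $\delta$, the identity $\gamma\delta = \sum_j\theta_j\,\alpha_{i_j}\delta$ must still hold in the refined representation, and this forces further identifications among products. The remedy is to propagate the splitting throughout the entire path structure of the radical: whenever a vertex is duplicated, also duplicate every incident arrow and every path of the radical passing through it, exactly in the manner by which \eq{FQ38} repairs the naive expansion of \eq{38}. Since by \Rref{nilrad} the radical $J$ is nilpotent, only finitely many paths are relevant, so the procedure terminates.

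For the bookkeeping I would formulate Step~3 in the language of the generic algebra of \cite[Construction~7.19]{BRV1}: treat the chosen independent entries as independent generic commuting variables, construct the generic representation over the resulting polynomial coefficient ring, and observe that by construction only proportional identifications remain. Checking that this new representation realizes the same algebra as $\cl{A}$ then reduces to verifying agreement on each homogeneous layer of the associated path algebra up to the nilpotency index of $J$, which is routine linear algebra. The combinatorial control of the propagation in Step~3 is the only real difficulty; once it is in place, the full quiver of the constructed representation carries, by design, none but proportional gluing relations.
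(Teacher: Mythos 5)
Your overall strategy coincides with the paper's: express the linear dependences among off--diagonal entries via Gauss elimination, separate out a basis of ``indispensable'' arrows, split each ``dispensable'' arrow into proportionally glued copies, and then repair the multiplicative relations by further enlarging the representation. You also correctly locate the crux of the matter in Example~\ref{prop-zation1}: the naive splitting destroys products such as $b^2 = cb$.

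The gap is precisely the part you label ``the only real difficulty.'' You propose to heal the multiplicative relations by ad~hoc propagation of the splitting through paths, gesture at generic commuting variables, and stop there. But two things go wrong in the details. First, your Step~2 glues the new arrows $\gamma_1,\dots,\gamma_k$ to $\alpha_{i_1},\dots,\alpha_{i_k}$ by the \emph{original} scalars $\theta_j\in F$, and creates only $k$ copies of the endpoints. The paper's mechanism is different: it introduces \emph{fresh nilpotent} commuting indeterminates $\widehat{\theta}_{j,u}$ over $F$, forms $C = F[\,\widehat{\theta}_{j,u}\,]/\langle\widehat{\theta}_{j,u}\rangle^{\ell}$ with $\ell$ the nilpotency index of $J$, and embeds $\hat A \hookrightarrow \hat A\otimes_F C \hookrightarrow \M[n'](K)$. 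The new vertices are indexed by \emph{all monomials} $\tilde\theta$ in the basis $\Theta$ of $C$ over $F$ (there are $n[C\!:\!F]$ of them, not $k$), and the proportionality constants in the split gluing are the nilpotents $\bar\theta_{j,u}\in C$ rather than the original $\theta_{j,u}\in F$. This is what makes the construction automatically multiplication-compatible — the algebra structure is simply inherited from the tensor product, so the new representation tautologically contains $\hat A$ — and it is why \Eref{proportionalization} produces a ten-row quiver rather than the small repaired diagram in \eq{FQ38}. Second, your appeal to \cite[Construction~7.19]{BRV1} is misplaced: the generic-element construction is what the paper invokes for the \emph{finite}-field case (\Tref{propglu11}), after the harder step of projecting onto a quasi-linear variety; for the infinite-field case the entire content is the tensor product with $C$, which your write-up never pins down. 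Until you specify that coefficient ring and prove $\hat A\otimes_F C$ is PI-equivalent to $\hat A$ over an infinite $F$, the ``routine linear algebra'' you defer is exactly the missing argument.
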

\begin{proof}
The radical $J$ of the Zariski closure $\hat A$ satisfies $J^\ell
= 0$ for some $\ell$, by Lemma~\ref{nilrad}. The proof is
comprised of choosing basic linear functionals $\phi_i$
corresponding to the arrows of the original quiver. We write down
the dependences of these arrows, and define a representation over
a larger field in which we can obtain proportional relations
involving the coefficients of these dependences, thereby producing
a new quiver in which all of these dependences are described by
proportional Frobenius gluing, which apply to the original linear
functionals.

Since the base field $F$ is infinite, the Zariski closure $\hat A$
of the algebra $A$ is finite dimensional over $K$. Furthermore,
all gluing among vertices is identical. We view $\hat A \subseteq
\End _K V$ where $\dimcol{V}{K} = n$. Now we turn to the
sub-Peirce decomposition of Remark~ \ref{Peirce0}, which is
induced by the sums of idempotents along identically glued blocks.
Some of the generators of $A$ belong to the semisimple component,
and some to the radical component. We consider the finitely many
sub-Peirce components of these radical elements, and their
finitely many nonzero products (since all nonzero products have
length $\le \ell$).

Applying Gauss elimination to a basis of the space of linear
dependences (as in \cite[Corollary 6.5]{BRV1}), we may choose
indices $i_1 < i_2 < \dots < i_m$ such that there is no relation
involving only the $\alpha_{i_j}$, while every other variable
$\alpha_u$ can be expressed in their terms, via relations of the
form
\begin{equation}\label{lindep1}
\alpha_{u} = \sum _{j=1}^{m} \theta_{j,u}
\alpha_{i_j}\end{equation} for $\theta_{j,u} \in F$.

Note that all such linear relations would be repeated in any glued
vertices. Let $k(\alpha_{u})$ denote the number of summands in
\eqref{lindep1}.
We call each such $\alpha_{u}$ a {\it dispensable} arrow; the
arrows $\alpha_{i_1},\dots,\alpha_{i_m}$ are called {\it
indispensable}. Note that we really are talking about linear
functionals between the respective semisimple blocks, with their
gluing, since any linear dependence in \eqref{lindep1} must be
repeated for every arrow glued to $\alpha_{i_u}$. Thus, strictly
speaking, we are handling dispensable and indispensable linear
functionals.

To form our extended full quiver, we start by taking any
indispensable arrow $\a$ in the original full quiver, say from
vertex $v$ to $w$, and replacing each with $k$ vertices $v^{(1)},
\dots, v^{(k)},$ $w^{(1)}, \dots, w^{(k)},$ and arrows from
$v^{(i)}$ to $v^{(i)}$, all glued identically for each $1 \le i
\le k.$

Thus, for the  dispensable $\a _{i_u}$ in the original full
quiver, say from $v _{i_u}$ to $w _{i_u}$, we put in $k(\a_{i_u})$
classes of identically glued arrows, for each $i$ in the right
side of \eqref{lindep1}. We observe that under the corresponding
representation $A \mapsto \M[n'](C)$ the gluing has become
proportional. (In fact, most new gluing is identical; the only
proportional gluing is between the $\a _{i}$ from the
indispensable arrows and the $\bar \theta_i \a _{i}$ from the
dispensable arrows, as illustrated in \Eref{proportionalization}).
\end{proof}

The following example indicates just how far we can improve the
full quiver.

\begin{exmpl}\label{ALEXEI}
The proof of \Tref{propglu} separates the arrows of the representation
\begin{equation}\label{AL.1}
\xymatrix@C=30pt{
\circ \ar@/^0pt/[r]^{\alpha+\beta}
&
\circ \ar@/^0pt/[r]^{\alpha}
&
\circ \ar@/^0pt/[r]^{\beta}
&
\circ {}
}
\end{equation}
which are involved in non-proportional relations, to the form
\begin{equation}\label{AL.1.imp}
\xymatrix@C=30pt{
\circ \ar@/^6pt/[r]^{\alpha} \ar@/_6pt/[r]_{\beta}
&
\circ \ar@/^0pt/[r]^{\alpha}
&
\circ \ar@/^0pt/[r]^{\beta}
&
\circ {}
};
\end{equation}
Note the presence of multiple edges to the left.
\end{exmpl}

If one is willing to pass to relatively free algebras, one can
further improve the full quiver. Since Theorem~\ref{propglu}
permits us to reduce to the case that all gluing is proportional,
we weaken \Dref{permglu} slightly, and define an equivalence
on arrows when they are proportionally glued; two branches are
called \textbf{proportionally permuted} if for each arrow, the
number of arrows proportionally glued to it is the same in each of
the branches. (In other words, one branch is obtained from the
other by permuting the arrows, but perhaps changing the
proportionality constants in the gluing.)

\begin{thm}\label{propglurel}
When $F$ is infinite,  any relatively free affine PI-algebra $A$
has a representation whose polynomial relations are consequences
of proportional gluing, and such that there are no double edges
(between two adjacent vertices) in the full quiver. More
generally, any two branches between two vertices are
proportionally permuted.
\end{thm}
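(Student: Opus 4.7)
The first assertion---that all polynomial relations are consequences of proportional gluing---follows immediately by applying \Tref{propglu} to the Zariski closure $\cl A$, since $A$ is PI-equivalent to $\cl A$ by \cite[Lemma 3.18]{BRV1}. So my starting point is the representation produced by that theorem, in which off-diagonal gluing is already proportional; the remaining work consists in refining it further to eliminate double edges and to force proportional permutation of parallel branches.

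My plan is to combine this representation with the generic construction of \cite[Construction~7.14, 7.19]{BRV1}, leveraging the defining feature of relative freeness: every assignment of the free generators $x_1, x_2, \ldots$ extends to an algebra endomorphism of $A$. In the generic model, $A$ is presented by generic matrices $Y_i$ whose $(r,s)$-block entries are, subject only to the $F$-Frobenius relations described by \Tref{comZar}, commuting indeterminates. Two distinct primitive arrows between vertices $v$ and $w$ correspond to $K$-linearly independent functionals $\phi_\alpha, \phi_\beta$ on $B_{r,s}$; my first key step is to show that if these are not proportionally glued, then the substitution endomorphisms coming from relative freeness act sufficiently transitively on generic entries in each summand that one can find a linear combination of the $Y_i$ witnessing a proportional relation between $\phi_\alpha$ and $\phi_\beta$. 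This will run parallel to the dispensable/indispensable analysis in the proof of \Tref{propglu}. A suitable block-wise change of basis inside $B_{r,s}$ then merges the two arrows, possibly at the cost of identically gluing new vertices to accommodate the change.

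For the more general statement, I will iterate this reasoning: a branch from $v$ to $w$ corresponds to a product of radical generic elements along a path, and any two branches between the same pair of vertices give two such products taking values in the single block $B_{r,s}$. Relative freeness should force these products to agree up to a permutation of the contributing arrows with proportionality constants---exactly the condition of being proportionally permuted in \Dref{permglu}. The hard part will be controlling the representation changes: each merging operation must preserve faithfulness on $A$, maintain the proportional relations established in the first step, and remain compatible with the Wedderburn block form of \Dref{Wbf}. The generic bookkeeping of \cite[Construction~7.19]{BRV1} is what I expect to make this controllable, by guaranteeing that the entries being identified really come from a single orbit under the substitution endomorphisms and thus can be identified without introducing spurious identities of $A$.
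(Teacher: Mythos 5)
Your first paragraph is fine as far as it goes: applying \Tref{propglu} to $\cl A$ and invoking PI-equivalence does settle the proportional-gluing clause. But the heart of the theorem is the elimination of double edges and the reduction to proportionally-permuted branches, and here your proposed mechanism does not match what is needed and contains a real gap.

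The paper does not merge arrows by a basis change. Quite the opposite: it \emph{separates} them. After the dispensable/indispensable partition (Gauss elimination, as in the proof of \Tref{propglu}), the paper forms $C = F[\widehat{\theta_{i,u}}]/\langle \widehat{\theta_{i,u}}\rangle^\ell$ and passes to the representation on $V' = V\otimes_F C$. The role of relative freeness and of $F$ being infinite is precisely to make this legitimate: since $\hat A \otimes_F C$ is PI-equivalent to $\hat A$, a relatively free $A$ may be represented over $C$. In the expanded quiver, an indispensable arrow from $v$ to $w$ becomes a family of identically glued arrows $v\otimes\tilde\theta\to w\otimes\tilde\theta$ over the basis $\Theta$ of $C$, while a dispensable arrow with relation $\alpha_u = \sum\theta_{j,u}\alpha_{i_j}$ is replaced by arrows $\bar\theta_{j}\alpha_{i_j}$ whose \emph{terminal vertices} $w_{i_u}\otimes \bar\theta_{j}\tilde\theta$ now differ with $j$. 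It is exactly this relocation of endpoints that destroys double edges and separates branches unless they were already proportionally permuted. \Eref{proportionalization} illustrates the construction concretely.

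Your alternative---finding a "proportional relation between $\phi_\alpha$ and $\phi_\beta$" using substitution endomorphisms, then merging by a block-wise change of basis---does not work as stated. If $\phi_\alpha$ and $\phi_\beta$ are genuinely $K$-linearly independent functionals on $B_{r,s}$ (which they are whenever there is an honest double edge), no endomorphism of a relatively free algebra will conjure a proportional relation between them; and a change of basis within a single block cannot collapse two independent functionals into one without changing the algebra. (This is exactly the cautionary content of \Eref{prop-zation1}: naive identifications produce a \emph{different} algebra unless the endpoints are also split apart correctly.) Merging is available only later, at the pseudo-quiver level (\Pref{basicquiv2}), and even there only for arrows that are already purely proportionally glued. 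The missing idea in your proposal is the scalar extension by the nilpotent truncated polynomial algebra $C$ and the consequent re-indexing of vertices by $\Theta$; without it you cannot remove double edges for a relatively free algebra.
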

\begin{proof}
The radical $J$ of the Zariski closure $\hat A$ satisfies $J^\ell
= 0$ for some $\ell$, by Lemma~\ref{nilrad}. The proof is
comprised of choosing basic linear functionals $\phi_i$
corresponding to the arrows of the original quiver. We write down
the dependences of these arrows, and define a representation over
a larger field in which we can obtain proportional relations
involving the coefficients of these dependences, thereby producing
a new quiver in which all of these dependences are described by
proportional Frobenius gluing, which apply to the original linear
functionals.

We start as in the previous proof.
Thus, applying Gauss elimination to a basis of the space of linear
dependences (as in \cite[Corollary 6.5]{BRV1}), we may choose
indices $i_1 < i_2 < \dots < i_m$ such that there is no relation
involving only the $\alpha_{i_j}$, while every other variable
$\alpha_u$ can be expressed in their terms, via relations of the
form
\begin{equation}\label{lindep1rel}
\alpha_{u} = \sum _{j=1}^{m} \theta_{j,u}
\alpha_{i_j}\end{equation} for $\theta_{j,u} \in F$.

Note that all such linear relations would be repeated in any glued
vertices. Let $k(\alpha_{u})$ denote the number of summands in
\eqref{lindep1rel}.
We call each such $\alpha_{u}$ a {\it dispensable} arrow; the
arrows $\alpha_{i_1},\dots,\alpha_{i_m}$ are called {\it
indispensable}. Note that we really are talking about linear
functionals between the respective semisimple blocks, with their
gluing, since any linear dependence in \eqref{lindep1rel} must be
repeated for every arrow glued to $\alpha_{i_u}$. Thus, strictly
speaking, we are handling dispensable and indispensable linear
functionals.

Note that since the base field is infinite, $\hat A\otimes _F C$
is PI-equivalent to $\hat A.$ Thus, we may work in $\hat A\otimes
_F C$. We pass to the representation of $\hat A$ as
transformations of $V' = V \otimes _F C,$ where the
$\widehat{\theta_{i,u}}$ are commuting indeterminates over $F$ and
$C = F[\widehat{\theta_{i,u}}]/\langle \widehat{\theta_{i,u}}
\rangle^\ell.$ (Since we have only finitely many $\theta_{i,u}$
needed for our finitely many relations, we see that
$\dimcol{C}{F}<\infty,$ and thus $\dimcol{V'}{K} < \infty.$) Let
$\bar \theta _{i,u}$ denote the canonical image of
$\widehat{\theta _{i,u}}$ in $C$. Intuitively, the $\bar \theta
_{i,u}$ behave like commuting indeterminates, except that the
 product of $\ell$ of  these $\bar \theta _{i,u}$ is 0. Thus the
nonzero products $\tilde \theta$ of the $\bar \theta _{i,u}$
comprise a base $\Theta$ for $C$ over $F$; if $B $ is a base for
$V$, then $B \otimes \Theta$ is a base for $V'$ over $F$, of order
$n' = n[C\! :\! F]$.

To form our extended full quiver, we start by taking any
indispensable arrow $\a$ in the original full quiver, say from
vertex $v$ to $w$, and replacing it in the new quiver by arrows
from $v \otimes \tilde{\theta}$ to $w \otimes \tilde \theta$, all
glued identically for each $\tilde \theta$ in $\Theta.$

For the  dispensable $\a _{i_u}$ in the original full quiver, say
from $v _{i_u}$ to $w _{i_u}$, we put in $k(\a_{i_u})$ classes of
identically glued arrows; namely, for each $i$ in the right side
of \eqref{lindep1rel}, we put in the identically glued arrows
$\bar \theta_i \a _{i}$ from $v _{i_u}\otimes \tilde \theta $ to
$w _{i_u}\otimes \bar \theta _{i}\tilde \theta $. We observe that
under the corresponding representation $A \mapsto \M[n'](C)$ the
gluing has become proportional and the endpoints have been
separated unless the branches are proportionally permuted. (In
fact, most new gluing is identical; the only proportional gluing
is between the $\a _{i}$ from the indispensable arrows and the
$\bar \theta_i \a _{i}$ from the dispensable arrows, as
illustrated in \Eref{proportionalization}).
\end{proof}

Theorem~\ref{propglurel} will be improved in \cite{BRV4}, by the
introduction of a natural grading on the algebra.

\begin{exmpl}\label{proportionalization}
Consider again the full quiver \eq{38} of \Eref{prop-zation1}. The
nilpotence index is $\ell = 4$, and in the single relation $\alpha
= \beta+\gamma$; here $\beta$ and $\gamma$ are indispensible, and
$\alpha$ is dispensible. We thus take $C =
F[\theta_1,\theta_2]/\ideal{\theta_1,\theta_2}^4$, with the
natural basis $\set{\theta_1^r \theta_2^s\suchthat r+s \leq 3}$,
of cardinality $10$. The expanded full quiver is
\begin{equation}
\xymatrix@C=48pt@R=0pt{
\comp[I]{1}{K}
&
\comp[\II]{1}{K} \ar[r]|(0.5){\beta}
&
\comp[\III]{1}{K} \ar[r]|(0.5){\gamma}
&
\comp[\IV]{1}{K}
\\
\comp[I]{1}{K} \ar[ru]|(0.4){\theta_1 \beta}
\ar[rd]|(0.4){\theta_2\gamma}
&
\comp[\II]{1}{K} \ar[r]|(0.5){\beta}
&
\comp[\III]{1}{K} \ar[r]
|(0.5){\gamma}
&
\comp[\IV]{1}{K}
\\
\comp[I]{1}{K}
&
\comp[\II]{1}{K} \ar[r]|(0.5){\beta}
&
\comp[\III]{1}{K} \ar[r]|(0.5){\gamma}
&
\comp[\IV]{1}{K}
\\
\comp[I]{1}{K} \ar[ruu]|(0.4){\theta_1 \beta}
\ar[rd]
|(0.5){\theta_2\gamma}
&
\comp[\II]{1}{K} \ar[r]|(0.5){\beta}
&
\comp[\III]{1}{K} \ar[r]|(0.5){\gamma}
&
\comp[\IV]{1}{K}
\\
\comp[I]{1}{K} \ar[ruu]|(0.6){\theta_1 \beta}
\ar[rdd]|(0.5){\theta_2\gamma}
&
\comp[\II]{1}{K} \ar[r]|(0.5){\beta}
&
\comp[\III]{1}{K} \ar[r]|(0.5){\gamma}
&
\comp[\IV]{1}{K}
\\
\comp[I]{1}{K} \ar[ruu]|(0.4){\theta_1 \beta}
\ar[rdd]|(0.3){\theta_2\gamma}
&
\comp[\II]{1}{K} \ar[r]|(0.5){\beta}
&
\comp[\III]{1}{K} \ar[r]|(0.5){\gamma}
&
\comp[\IV]{1}{K}
\\
\comp[I]{1}{K}
&
\comp[\II]{1}{K} \ar[r]|(0.5){\beta}
&
\comp[\III]{1}{K} \ar[r]|(0.5){\gamma}
&
\comp[\IV]{1}{K}
\\
\comp[I]{1}{K} \ar[ruu]|(0.3){\theta_1 \beta}
\ar[rdd]|(0.5){\theta_2\gamma}
&
\comp[\II]{1}{K} \ar[r]|(0.5){\beta}
&
\comp[\III]{1}{K} \ar[r]|(0.5){\gamma}
&
\comp[\IV]{1}{K}
\\
\comp[I]{1}{K} \ar[ruu]|(0.5){\theta_1 \beta}
\ar[rd]|(0.5){\theta_2\gamma}
&
\comp[\II]{1}{K} \ar[r]|(0.5){\beta}
&
\comp[\III]{1}{K} \ar[r]|(0.5){\gamma}
&
\comp[\IV]{1}{K}
\\
\comp[I]{1}{K}
&
\comp[\II]{1}{K} \ar[r]|(0.5){\beta}
&
\comp[\III]{1}{K} \ar[r]|(0.5){\gamma}
&
\comp[\IV]{1}{K}
},
\end{equation}
(The rows correspond to $\theta_1^3,
\theta_1^2, \theta_1^2\theta_2, \theta_1, \theta_1 \theta_2, 1,
\theta_1\theta_2^2, \theta_2,\theta_2^2$ and $\theta_2^3$, in this
order, for graphical reasons.)
\end{exmpl}

The reduction to proportional Frobenius gluing has implications in
the PI-theory. In \cite[Example 6.1]{BRV3} we give an example of
an algebra with parametric identities, which are defined over
a commutative algebra extending of the base field $F$, but are
not defined over~$F$. However when we extend $F$ so as to obtain a
representation with proportional gluing, we see that the
parametric identities are now defined over the base ring.

One can see easily that identical gluing between branches yields
parametric sets of identities. However, if the gluing is identity
permuted, then the corresponding algebra does not have a
parametric set of identities, since  the proportionality
coefficients are~1. Details are to be given in \cite{BRV3}.

\subsubsection{The case of a finite base field}

When the base field $F$ is finite, the situation is more
intricate, since we have to deal with polynomial relations arising
from Frobenius gluing. This forces us to deal with relatively free
algebras.

\begin{thm}\label{propglu11}
Any relatively free affine PI-algebra $A$ has a representation for
whose full quiver all gluing is Frobenius proportional.
\end{thm}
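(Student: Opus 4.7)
My plan is to imitate the proofs of Theorems \ref{propglu} and \ref{propglurel}, but to replace \emph{linear} dependences among the arrows by $q$-\emph{polynomial} dependences, which are available in the finite-field case thanks to Corollary \ref{addit} and Proposition \ref{proppow}. As in \ref{propglurel}, the crucial input is that $A$ is relatively free, so that we are free to tensor the Zariski closure $\hat A$ with an auxiliary commutative $F$-algebra $C$ without leaving the PI-equivalence class.

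First, since $A$ is affine PI, Lemma \ref{nilrad} gives $J^{\ell}=0$ for the radical $J$ of $\hat A$, and we may as well assume $A=\hat A$ and fix a representation in Wedderburn block form with all diagonal gluing already of Frobenius type (Theorem \ref{Frobglue}). Next, I analyse the $q$-polynomial relations (of class (\ref{C3})) linking the off-diagonal blocks. The additive variety they cut out is quasi-linear, so by Lemma \ref{prereq1} it splits as a sum $V_0\oplus V_1$, with $V_0$ irreducible and $V_1$ finite. Applying Proposition \ref{proppow} to $V_0$, I may single out ``indispensable'' arrows $\alpha_{i_1},\dots,\alpha_{i_m}$ with no $q$-polynomial relation among them, such that every other (``dispensable'') arrow $\alpha_u$ is a $q$-power series in the indispensable ones; because $J^{\ell}=0$ truncates all products of length $\ge\ell$, this power series reduces to an honest $q$-polynomial
\[
\alpha_u\;=\;\sum_{j=1}^m\sum_{t\ge 0}\theta_{j,u,t}\,\alpha_{i_j}^{q^{t}},\qquad \theta_{j,u,t}\in F,
\]
with only finitely many nonzero coefficients. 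The finite piece $V_1$ contributes only Frobenius relations of $F$-Frobenius type (Theorem \ref{comZar}), which are already absorbed by diagonal Frobenius gluing.

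Now I imitate the construction in the proof of Theorem \ref{propglurel}. Introduce commuting indeterminates $\widehat{\theta}_{j,u,t}$ and put $C=F[\widehat{\theta}_{j,u,t}]/\langle\widehat{\theta}_{j,u,t}\rangle^{\ell}$, which is finite dimensional over $F$; write $\bar\theta_{j,u,t}$ for the images. Since $A$ is relatively free, $\hat A\otimes_F C$ is PI-equivalent to $\hat A$, and its representation on $V\otimes_F C$ admits a Wedderburn block form whose vertices are indexed by $v\otimes\tilde\theta$ for $v$ a vertex of the old full quiver and $\tilde\theta$ ranging over a monomial basis $\Theta$ of $C$. For each indispensable arrow $\alpha_{i_j}\colon v\to w$ I install identically glued copies $v\otimes\tilde\theta\to w\otimes\tilde\theta$ for every $\tilde\theta\in\Theta$. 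For each dispensable $\alpha_u\colon v\to w$ and every nonzero term $\theta_{j,u,t}\,\alpha_{i_j}^{q^{t}}$ in its expansion, I install an arrow $v\otimes\tilde\theta\to w\otimes\bar\theta_{j,u,t}\tilde\theta$, glued Frobenius-proportionally (with scalar $\theta_{j,u,t}$ and Frobenius twist $q^{t}$) to the corresponding copy of $\alpha_{i_j}$. In this new full quiver every identification among arrows is either identical gluing of the replicated indispensable arrows, or proportional Frobenius gluing of a dispensable arrow's summand to an indispensable arrow — both instances of Frobenius proportional gluing in the sense of Definition \ref{propFb}.

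The main obstacle I foresee is controlling the passage from the $q$-polynomial relations provided by Proposition \ref{proppow} into the quiver-theoretic data: one must verify that the truncation by $J^{\ell}=0$ really is compatible with the truncation by $\langle\widehat{\theta}_{j,u,t}\rangle^{\ell}$, and that in splitting the dispensable arrows one does not inadvertently destroy a relation coming from $V_1$ or from the diagonal Frobenius gluing. The key technical point, and the reason ``relatively free'' is essential, is the reduction $\hat A\otimes_F C\sim_{\mathrm{PI}}\hat A$ over a finite base field; everything else is a (fairly bookkeeping-heavy) elaboration of the construction that already worked in Theorem \ref{propglurel}.
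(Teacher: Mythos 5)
You have the right skeleton — Lemma~\ref{prereq1} to split the radical variety into an irreducible piece $V_0$ and a finite piece $V_1$, Proposition~\ref{proppow} to single out independent arrows on $V_0$ with everything else a $q$-polynomial in them (truncated by $J^\ell=0$), and an Artinian base extension $C=F[\widehat\theta]/\langle\widehat\theta\rangle^\ell$ to proportionalize the dispensable arrows, as in Theorem~\ref{propglurel}. This much matches the paper.

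However, your handling of $V_1$ is a genuine gap. You assert that ``the finite piece $V_1$ contributes only Frobenius relations of $F$-Frobenius type (Theorem~\ref{comZar}), which are already absorbed by diagonal Frobenius gluing.'' This is not correct: Theorem~\ref{comZar} governs commutative \emph{semiprime} Zariski closed algebras — the center and the semisimple diagonal blocks — whereas $V_1$ is a finite-dimensional summand of the \emph{radical} $J$ and so lives strictly above the diagonal. Its relations are off-diagonal and cannot be folded into diagonal gluing. The paper deals with $V_1$ by a separate genericization: it first invokes finite PI-rank and the generic-element construction of \cite[Construction~7.14]{BRV1}, then replaces a basis $w_k=\sum_j\psi_{j,k}b_j$ of $V_1$ by generic coefficients $\overline{\psi_{j,k}}$ living in $F[\widehat{\psi_{j,k}}]/\langle\widehat{\psi_{j,k}}^q-\widehat{\psi_{j,k}}\rangle$, and only then projects onto the quasi-linear variety defined by $q$-polynomials of bounded degree $\le q^{\ell m}$. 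The final representation is over the direct sum of that extension with $F[\theta]/\langle\theta\rangle^\ell$, so both ingredients appear in the base ring; you are missing the first.

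Relatedly, your justification ``Since $A$ is relatively free, $\hat A\otimes_F C\sim_{\mathrm{PI}}\hat A$'' is exactly the infinite-field argument of Theorem~\ref{propglurel} and does not transfer verbatim to finite $F$: over a finite field, extending scalars can kill identities of the form $x^q-x$, so PI-equivalence under $-\otimes_F C$ is precisely what is at issue. The paper sidesteps this by passing first to the generic algebra $\tilde A$ of \cite[Construction~7.14, Theorem~7.15]{BRV1}, whose role is to make the base-ring enlargement legitimate in the relatively free setting. Without that step, the reduction you carry out on $V_0$ is fine, but the claim that the resulting algebra represents $A$ up to PI-equivalence is unsupported in characteristic $p$ over $\F_q$.
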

\begin{proof} We are done by Theorem~\ref{propglurel} unless the base field $F$ is
finite. Let $p = \cha (F)$. By \cite[Theorem~7.10]{BRV1}, the
Zariski closure of $A$ has finite PI-rank, which means that it is
PI-equivalent to a representable affine PI-algebra. Taking its
generic elements, as constructed in \cite[Construction 7.14]{BRV1}
and studied in \cite[Theorem 7.15]{BRV1}, consider the algebra
$\tilde A$ generated by these generic elements and pass to its
Zariski closure $\hat A$.

In view of Lemma~\ref{prereq1}, we can decompose $J = V_0 \oplus
V_1$ as a direct sum of the connected component ~$V_0$ of 0 and a
space $V_1$ which is finite dimensional over $F$ and thus finite,
say with base $b_1, \dots, b_m$. We take $\tilde A$ generated by
$V_0$ and all elements $w_k = \sum _j \psi_{j,k} b_j$ of $V_1$, $1
\le k \le m$, where $\psi_{j,k}\in F$. Note that $\psi_{j,k}^q
=\psi_{j,k}$ since $|F| = q$. Then any element of $\tilde{A}$ can
be represented as a linear combination of products of length $ \le
\ell m$ of these elements. We need to replace the $w_k$ by generic
elements $\sum _j \overline{\psi_{j,k}} b_j$ of $V_1$, where
$\overline{\psi_{j,k}}$ are generic coefficients satisfying
$\overline{\psi_{j,k}}^q = \overline{\psi_{j,k}}$. These elements
do not belong to $F$, but rather to the finite dimensional
extension $F[\widehat{\psi_{j,k}}]/\langle \widehat{\psi_{j,k}}^q
-\widehat{\psi_{j,k}}\rangle$, where $\widehat{\psi_{j,k}}$ are
commuting indeterminates over $F$.

We project onto the variety $\bar V$ defined by $q$-polynomials of
degree $\le q^{\ell m}.$ Since the projection of a quasi-linear
variety is quasi-linear, we may work in $\bar V$, in which all our
$q$-power series project to $q$-polynomials. In view of
Proposition \ref{proppow}, there is some subset of the arrows
which is independent, and all other arrows can be written as
$q$-polynomials in terms of these arrows.

Again we pass to the representation of $\hat A$ over
$$(J\otimes _F F[\theta_{i,j}]/\langle \theta_{i,j} \rangle^\ell)
\,\oplus \, (F[\widehat{\psi_{j,k}}]/\langle
\widehat{\psi_{j,k}}^q -\widehat{\psi_{j,k}}\rangle);  $$ we can
eliminate arrows as in the case for $F$ infinite (by the same
technique of expanding the graph and labelling one arrow of each
relation as ``dispensable''), but this time our relations are in
terms of $q$-polynomials rather than linear combinations. Thus, in
this case we only can reduce to proportional Frobenius gluing.
\end{proof}

Note that Theorem~\ref{propglu11} applies to $A$ as an algebra
over the base field $F$, but not as a $K$-algebra. One can often
extend Theorem~\ref{propglu11} to reduce to the case that there
are no double edges (between two adjacent vertices); this issue is
to be treated in \cite{BRV3}.

This theory of full quivers
(especially \Tref{propglu11}) enables us to characterize Noetherian properties
of relatively free algebras, to be discussed in
\cite{BRV3}.

\section{Decomposing quivers}

Let $A \sub \M[n](K)$ be an algebra in Wedderburn block form, and
let $\Gamma$ be its full quiver. We turn now to the question of
whether the decomposition of the full quiver $\Gamma$ has a
matching decomposition of the algebra $A$ as a  subdirect product.
In \cite{BRV1} we discuss fine decompositions of $A$ related to
various sets of idempotents in $A$.  To every set of vertices
$\Gamma_0 \sub \Gamma$ there is an associated idempotent $e =
e(\Gamma_0)$, which is the sum of the idempotents  in $\M[n](K)$
corresponding to the matrix blocks of vertices in $\Gamma_0$.
However in general $e$ is  not an element of $A$ because of gluing
(e.g., $e_{11}$ is not an element of $B_{\ell}$ in
\Eref{compress}); nevertheless, we consider such idempotents,
ignoring gluing altogether.

\subsection{Decomposing quivers}

We say that a quiver $\Gamma_0$ is {\bf convex} if for every $i, k \in
\Gamma_0$, if $i \ra j_1 \ra \cdots \ra j_t \ra k$ is a path
connecting $i$ and $k$ in $\Gamma_0$, then $j_1,\dots,j_t \in
\Gamma_0$.
\begin{prop}
If $\Gamma_0$ is convex, with $e = e(\Gamma_0)$, then $a \mapsto
eae$ is a projection of algebras $A \ra eAe$ (without $1$).
(Indeed, writing $a = \sum e_{ij} a_{ij}$ and $b = \sum e_{ij}
b_{ij}$, convexity implies that $eaebe = eabe$).
\end{prop}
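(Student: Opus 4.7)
The plan is to verify that $\pi \co A \to eAe$ defined by $\pi(a) = eae$ is an algebra homomorphism (without $1$). Additivity $\pi(a+b) = \pi(a) + \pi(b)$ is immediate from distributivity of multiplication in $\M[n](K)$. The only substantive step is multiplicativity, which, since $e^2 = e$, reduces exactly to the identity $eaebe = eabe$ flagged in the statement; this same identity also shows that $eAe$ is closed under multiplication and hence is genuinely a (non-unital) subalgebra of $\M[n](K)$, so that $\pi$ does land where claimed.

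To prove this identity I would use the Wedderburn block decomposition. Write $e = \sum_{i \in \Gamma_0} e_i$, where the $e_i$ are the idempotents of $\M[n](K)$ corresponding to the diagonal blocks of $A$ (vertices of $\Gamma$), and for every $a \in A$ set $a_{ij} := e_i a e_j$, so that $a = \sum_{i,j} a_{ij}$. Substituting into both sides gives
$$eaebe = \sum_{i,k \in \Gamma_0}\ \sum_{j \in \Gamma_0} a_{ij}\,b_{jk}, \qquad eabe = \sum_{i,k \in \Gamma_0}\ \sum_{j} a_{ij}\,b_{jk},$$
and the difference is $\displaystyle\sum_{\substack{i,k \in \Gamma_0 \\ j \notin \Gamma_0}} a_{ij}\,b_{jk}$; it suffices to show that every such summand vanishes.

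Fix $i,k \in \Gamma_0$ and $j \notin \Gamma_0$. If $a_{ij} \neq 0$ then $e_i A e_j \neq 0$; since $i \in \Gamma_0$ and $j \notin \Gamma_0$ forces $i \neq j$, the nonvanishing of this off-diagonal Wedderburn block records (by the very definition of the full quiver, together with the fact that imprimitive arrows arise from composing primitive ones) the existence of a directed path $i \to \cdots \to j$ in $\Gamma$. Similarly $b_{jk} \neq 0$ yields a directed path $j \to \cdots \to k$. Concatenating gives a directed path in $\Gamma$ from $i$ to $k$ whose interior passes through $j$; by convexity of $\Gamma_0$ the vertex $j$ must lie in $\Gamma_0$, contradicting the hypothesis. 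Hence $a_{ij}\,b_{jk} = 0$, and summing over the bad triples gives $eaebe = eabe$.

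I expect no real obstacle here: once the Wedderburn block form is in hand the argument is straightforward bookkeeping combined with a single application of the convexity hypothesis. The only conceptual point that deserves care is linking the nonvanishing of a Wedderburn block $e_i A e_j$ to the existence of an actual directed path in the full quiver (rather than just a primitive arrow), which is precisely the reason convexity is phrased in terms of arbitrary paths.
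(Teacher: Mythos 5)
Your proposal is correct and follows exactly the line the paper sketches in its parenthetical "proof": decompose $a$ and $b$ via the Wedderburn-block idempotents, observe that $eaebe$ and $eabe$ differ only by terms $a_{ij}b_{jk}$ with $i,k\in\Gamma_0$ and $j\notin\Gamma_0$, and kill those terms using the observation that a nonvanishing block $e_iAe_j$ is an arrow of $\Gamma$ (hence contributes a path), so convexity would force $j\in\Gamma_0$. You have simply spelled out the bookkeeping that the paper leaves implicit.
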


\begin{cor}\label{sdcover}
If every vertex of $\Gamma$ belongs to one of the convex
sub-quivers $\Gamma_1,\dots,\Gamma_r$ (which are not necessarily disjoint),
then $A$ is a subdirect product of the algebras $A_i$ corresponding to the $\Gamma_i$.
\end{cor}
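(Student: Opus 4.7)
The plan is to construct the product homomorphism $\phi\co A \ra A_1 \times \cdots \times A_r$, verify surjectivity of each projection, and then establish injectivity of $\phi$. Applying the preceding Proposition, since each $\Gamma_i$ is convex, the map $\pi_i\co A \ra A_i$ given by $\pi_i(a) = e_i a e_i$, with $e_i := e(\Gamma_i)$ and $A_i := e_i A e_i$, is an algebra homomorphism; bundling these together, $\phi := (\pi_1,\dots,\pi_r)$ is an algebra homomorphism into $\prod_i A_i$.

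Surjectivity of the $i$-th coordinate is immediate, since every element of $A_i$ has the form $e_i a e_i$ for some $a \in A$, and then $\pi_i(a) = e_i a e_i$ is that element. The remaining step, injectivity of $\phi$, is the crucial one.

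To prove injectivity I would assume $\phi(a) = 0$, decompose $a = \sum_{j,k} a_{jk}$ into Wedderburn blocks with $a_{jk} \in B_{j,k}$, and observe that $\pi_i(a) = \sum_{j,k\in\Gamma_i} a_{jk}$. Vanishing for every $i$ then forces $a_{jk} = 0$ whenever both $j$ and $k$ lie in a common $\Gamma_i$. The covering hypothesis supplies such an index for any diagonal block $a_{jj}$. For an off-diagonal block $a_{jk}$, which is supported on an arrow (or path) $j \ra \cdots \ra k$ of $\Gamma$, I would interpret the covering hypothesis as applying to the sub-quivers as graphs, i.e.\ that every arrow of $\Gamma$ lies entirely within some $\Gamma_i$; this again supplies the required common index, whence $a_{jk} = 0$ for all $j,k$ and thus $a = 0$.

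The main obstacle is precisely this injectivity step, which would fail if one only covered vertices: an element supported on an arrow whose two endpoints sat in distinct $\Gamma_i$'s would be annihilated by every $\pi_i$ and yet be nonzero. The natural reading of the hypothesis is therefore that the $\Gamma_i$ cover $\Gamma$ as quivers (vertices together with their arrows); under this reading the corollary follows directly from the Proposition by assembling the coordinate homomorphisms $\pi_i$ into~$\phi$.
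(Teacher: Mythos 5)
Your argument follows the paper's own one-line proof exactly: define $\phi = (\pi_1,\dots,\pi_r)$ with $\pi_i(a) = e_i a e_i$, note each $\pi_i$ is an algebra homomorphism by convexity (the preceding Proposition), and conclude injectivity from the covering hypothesis — which is all the paper says.

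Your observation about the injectivity step is, however, a genuine and worthwhile correction rather than mere fussiness. As literally stated, the hypothesis that every \emph{vertex} lies in some $\Gamma_i$ is insufficient. For the full quiver $\bullet \ra \bullet$ of the $2\times 2$ upper triangular matrices, $\Gamma_1=\{1\}$ and $\Gamma_2=\{2\}$ are each (trivially) convex and together cover all vertices, yet $\phi(a)=(e_1 a e_1,\, e_2 a e_2)$ kills the strictly upper triangular part and is not injective. What is actually needed — and what the paper's terse proof tacitly assumes — is that for every pair of vertices $j,k$ with $e_j A e_k \ne 0$, some single $\Gamma_i$ contains both $j$ and $k$; convexity then guarantees that such a $\Gamma_i$ also contains every intermediate vertex of any path from $j$ to $k$, so $\pi_i$ preserves the corresponding block. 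Your reading of the hypothesis, that the $\Gamma_i$ cover $\Gamma$ as quivers (arrows included, not just vertices), is the right one, and under that reading your proof is complete and coincides with the paper's.
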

\begin{proof}
The map $A \ra A_1 \times \cdots \times A_r$ is an algebra
homomorphism by convexity, and it is injective by the covering
assumption.
\end{proof}

\begin{exmpl}\label{pseud}
Suppose the quiver contains three vertices with arrows glued in
the form
\begin{equation*}
\xymatrix@C=40pt@R=2pt{
{} & \comp[\II]{1}{K}
\\
\comp[I]{1}{K} \ar[ru]|(0.5){\alpha}
\ar[rd]|(0.5){\lambda\alpha} & {}
\\
{} & \comp[\II]{1}{K} }
\qquad
\begin{array}{c} {\ } \\ {\ } \\ \mbox{or} \end{array}
\qquad
\xymatrix@C=40pt@R=2pt{
\comp[I]{1}{K} \ar[rd]|(0.5){\alpha} &  {}
\\
{} & \comp[\II]{1}{K} \ ;
\\
\comp[I]{1}{K} \ar[ru]|(0.5){\lambda\alpha}  & }
\end{equation*}
for some $\lambda \in K$. (The vertices must be glued, in view of
\Pref{p2p}). Then we can trade two arrows for one, by creating a
new vertex, replacing the existing arrows with
$\xymatrix@C=32pt{\comp[I]{1}{K} \ar[r]^{(\lambda+1)\alpha} &
\comp[\II]{1}{K}}$. \end{exmpl}

To break down the full quiver further, we need another definition.

\begin{defn}\label{candef1}
A pair of arrows is {\bf separated} if  the glued components of
the two arrows are disjoint; i.e., there is no single branch that
contains arrows glued to each of them.

A {\bf   geometric decomposition} $\Gamma \ra \cup \Gamma_i$ of
basic full quivers  is a union of subquivers such that, for any
arrow of $\Gamma_i$, all arrows of $\Gamma$ glued to this arrow
also appear in $\Gamma_i$.

A full quiver is  {\bf geometrically decomposable} if it admits a
proper geometric decomposition, and geometrically \indecomposible\
otherwise.
\end{defn}

\begin{rem}\label{sep1} Any full quiver with a pair of separated arrows is geometrically decomposable.
\end{rem}

We would like the geometric decomposition of quivers to match a
``geometric'' subdirect decomposition of algebras. However, we are
confronted with the following sort of example:

\begin{exmpl}\label{mess}
Consider the algebra $A$ having full quiver $\Gamma$ given by:
\begin{equation} \label{EE211}
\xymatrix@C=30pt@R=2pt{
{} & \comp[I]{1}{K} \ar[rd]|(0.5){\beta} & {}
\\ \comp[I]{1}{K} \ar[ru]|(0.5){\alpha} \ar[rd]|(0.5){\alpha} &
{} & \comp[I]{1}{K}
\\ {} & \comp[I]{1}{K} \ar[ru]|(0.5){\gamma} & {}  }
\end{equation}

The geometric decomposition of $\Gamma$ is the union of the two
full quivers\begin{equation}\label{squarenot0}
\xymatrix@C=30pt@R=2pt{
{} & {} & \comp[I]{1}{K} \ar[rd]|(0.5){0} & {} & {}
\\ \Gamma_1 = & \comp[I]{1}{K} \ar[ru]|(0.5){\alpha} \ar[rd]|(0.5){\alpha} &
{} & \comp[I]{1}{K} & ,{}
\\ {} & {} & \comp[I]{1}{K} \ar[ru]|(0.5){\gamma} & {} & {} }
\xymatrix@C=30pt@R=2pt{
{} & {} & \comp[I]{1}{K} \ar[rd]|(0.5){\beta} & {}
\\ \Gamma_2 = & \comp[I]{1}{K} \ar[ru]|(0.5){\alpha} \ar[rd]|(0.5){\alpha} &
{} & \comp[I]{1}{K} & {}
\\ {} & {} & \comp[I]{1}{K} \ar[ru]|(0.5){0} & {} & {} }
\end{equation}
On the other hand, the subalgebra of $A$ corresponding to the full
quiver
\begin{equation}\label{EE21}
\xymatrix@C=30pt@R=2pt{
{} & \comp[I]{1}{K} \ar[rd]|(0.5){\beta} & {}
\\ \comp[I]{1}{K} \ar[ru]|(0.5){\alpha} \ar[rd]|(0.5){\alpha} &
{} & \comp[I]{1}{K}
\\ {} & \comp[I]{1}{K} \ar[ru]|(0.5){-\beta} & {}  }
\end{equation}
has the property that the radical squared is 0, which fails in
each of the algebras $A_1,A_2$ of \eq{squarenot0}, and it follows
that $A$ cannot be a subdirect product of $A_1$ and $A_2$ as
algebras.
\end{exmpl}

Nevertheless, the algebra $A$ of Example~\ref{mess} is a subdirect
product of $A_1$ and $A_2$ as vector spaces, and the following
observation is immediate:

\begin{rem}\label{sep} If the full quiver $\Gamma$ of an algebra $A$ has a pair of
separated arrows, then $A$ has subalgebras $A_i$ corresponding to
the $\Gamma_i$ in the geometric decomposition of $\Gamma,$ such
that $A$ is a subdirect product of the $A_i$ as vector spaces.
\end{rem}

Also, note that the obstruction in Example~\ref{mess} to obtaining
$A$ as a subdirect product of algebras came from a subalgebra of
$A$ whose full quiver has extra gluing, which would not occur from
 elements in ``generic'' position. Thus, we will pursue these
 decomposition when studying polynomial identities in \cite{BRV3},
 \cite{BRV4}.

Remark ~\ref{sep1} can be refined further using
Lemma~\ref{permut2}.
 Let us embellish  this
discussion with a few more examples.

\begin{exmpl}\label{Grass2}
\begin{enumerate}
\item Consider the Grassmann algebra with two generators, whose
full quiver is given in \eqref{G2+}. Since we are interested in
its radical $J$, which is the algebra without 1, we use the full
quiver~\eqref{G2.1}. Let $J_1$ be the subspace corresponding to
the arrows $\alpha$, and $J_2$ be the subspace corresponding to
the arrows $\beta$. There are also two arrows (denoted $\gamma$)
corresponding to paths of length 2, which correspond to a space
$J_3$: \vskip 0.2cm
\begin{equation}\label{G21}
\xymatrix@C=40pt@R=32pt{
\circ \ar@/^0pt/[r]^{\alpha} \ar@/_0pt/[dr]_{-\gamma}
&
\circ \ar@/^0pt/[d]^{-\beta}
\\
&
\circ
} \qquad
\xymatrix@C=40pt@R=32pt{
\circ \ar@/_0pt/[d]^{\beta}\ar@/_0pt/[dr]^{\gamma}
&
\\
\circ \ar@/^0pt/[r]^{\alpha}
&
\circ
}
\end{equation}
Thus, $J_1$, $J_2$, and $J_3$ correspond respectively to the
operator spaces
\begin{equation}\label{G22}
\xymatrix@C=40pt@R=15pt{
\circ \ar@/^0pt/[r]^{\alpha}
&
\circ
\\
\circ \ar@/^0pt/[r]^{\alpha}
&
\circ
}, \qquad
\xymatrix@C=20pt@R=32pt{
\circ \ar@/_0pt/[d]^{\beta}
&
\circ \ar@/^0pt/[d]^{-\beta}
\\
\circ
&
\circ
},\qquad
\xymatrix@C=40pt@R=32pt{
\circ \ar@/^0pt/[dr]^{\gamma}
&
\\
&
\circ
}.
\end{equation}

\item The path \begin{equation}\label{33}
\xymatrix@C=30pt{
\circ \ar@/^0pt/[r]^{\alpha}
&
\circ \ar@/^0pt/[r]^{\beta}
&
\circ \ar@/^0pt/[r]^{\alpha +\beta}
&
\circ {}
}
\end{equation}
 decomposes as \begin{equation}
\xymatrix@C=30pt{
\circ \ar@/^0pt/[r]^{\alpha}
&
\circ
&
\circ \ar@/^0pt/[r]^{\alpha}
&
\circ {}
}, \qquad\qquad \xymatrix@C=30pt{
\circ
&
\circ \ar@/^0pt/[r]^{\beta}
&
\circ \ar@/^0pt/[r]^{\beta}
&
\circ {}
},
\end{equation}
and the composition of two arrows (corresponding to $J^2$) is
\begin{equation}
\xymatrix@C=30pt{
\circ \ar@/^10pt/[rr]^{\alpha\beta}
&
\circ \ar@/^10pt/[rr]|(0.25)\hole^{\alpha\beta}
&
\circ
&
\circ {}
}, \qquad\qquad \xymatrix@C=30pt{
\circ
&
\circ \ar@/^10pt/[rr]^{ \beta^2}
&
\circ
&
\circ {}},
\end{equation}
noting that $\alpha^2 = 0$. The composition of three arrows
(corresponding to $J^3$) is
\begin{equation}
\xymatrix@C=30pt{
\circ \ar@/^15pt/[rrr]^{\alpha\beta(\alpha+\beta)}
&
\circ
&
\circ
&
\circ {}
}.\end{equation}
\end{enumerate}
\end{exmpl}

\begin{rem} This leads us to the question of how to ``restore'' the
imprimitive arrows in the full quiver. The answer is obtained by
considering the full quiver of powers of the radical. More
generally, if $A$ is an algebra without 1, we call an arrow {\bf
pared} if both of its vertices are $\circ$. Then any path $\circ
\to \circ \to \cdots \to \circ$ of length $\ell$ comprised
entirely of pared arrows corresponds to a primitive (pared) arrow
in $A^{\ell}$.
(This only works for pared arrows, since an element of $J$
matching an arrow having a vertex corresponding to an idempotent
of $A$ can be multiplied as many times as one wants by that
idempotent and remain the same unpared arrow of $A^\ell$ for each
$\ell$. Thus, the full quiver of $A^\ell$ is comprised of all
unpared arrows of the full quiver of $A$, as well as paths of
length $\ell$ of pared arrows.)

For example, if $A$ is the algebra whose full quiver is
$$
\xymatrix@C=30pt{
\circ \ar[r]
&
\circ \ar[r]
&
\circ \ar[r]
&
\bullet \ar[r] & \circ }$$ then the longest path of pared arrows
has length $3$. Here $A^2$ corresponds to the sub-quiver
$$
\xymatrix@C=30pt{
\circ \ar@/^10pt/[rr]
&
\circ \ar@/^10pt/|(0.25){\hole}[rr]
&
\circ \ar[r]
&
\bullet \ar[r] & \circ },$$ and $A^n$ (for every $n \geq 3$)
correspond to the sub-quiver \vskip0.1cm
$$
\xymatrix@C=30pt{
\circ \ar@/^15pt/[rrr]
&
\circ \ar@/^10pt/[rr]
&
\circ \ar[r]
&
\bullet \ar[r] & \circ },$$ while $J$, $J^2$ and $J^3$ correspond
to
$$
\xymatrix@C=30pt{
\circ \ar[r]
&
\circ \ar[r]
&
\circ \ar[r]
&
\circ \ar[r] & \circ },$$
$$
\xymatrix@C=30pt{
\circ \ar@/^10pt/[rr] \ar@/^15pt/[rrr]
&
\circ \ar@/^10pt/|(0.25){\hole}[rr] \ar@/^15pt/|(0.162){\hole}|(0.333){\hole}[rrr]
&
\circ \ar@/^10pt/|(0.25){\hole}|(0.3){\hole}[rr]
&
\circ & \circ },$$ and
$$
\xymatrix@C=30pt{
\circ \ar@/^15pt/[rrr] \ar@/^20pt/[rrrr]
&
\circ \ar@/^15pt/|(0.333){\hole}[rrr]
&
\circ
&
\circ & \circ },$$ respectively.

\end{rem}

\subsection{Idempotents and sub-quivers}\label{ssec:sd}

\begin{rem}
Let $\Gamma_0 \sub \Gamma$, and $e = e(\Gamma_0)$. Then $eAe$ is a
subalgebras of $\M[n](K)$, whose full quiver is $\Gamma_0$. The
relations of $eAe$ can be obtained from those of $A$ by removing
all coefficients corresponding to vertices not in $\Gamma_0$. (We
emphasize again that in general $eAe$ is not a subalgebra of $A$).
\end{rem}

The obstacles at the algebra level for ``improving'' quivers can
be overcome only at the generic level, as we see in the next two
examples.

\begin{exmpl}\label{sdirr}
Consider the \Zcd\ algebra $A$ whose full quiver is
\begin{equation}
\xymatrix@C=16pt@R=10pt{
{} & \circ \ar[rd] & {} \\
\circ \ar[ur]^{\alpha} \ar[dr] & & \circ \\
{} & \circ \ar[ru]_{\alpha} & {} },
\end{equation}
which is geometrically decomposed into two paths. Taking $x =
e_{12}+e_{34}$, $y = e_{13}$ and $z = e_{24}$ we obtain  the
presentation $A = K[x,y,z \,|\, x^2=xy=y^2=yz=zx=zy=z^2=0, yx =
xz]$ (as an algebra without unit), and one easily checks that in
any non-trivial quotient, $yx = 0$ (which, incidentally, can be
described in the same quiver, by imposing the relation that the
imprimitive diagonal arrow is zero). Therefore, this algebra $A$
is subdirectly irreducible.
\end{exmpl}

\begin{exmpl}
Continuing with the algebra of \Eref{sdirr}, the algebra of
generic elements has the presentation $\hat{A} = K[x,y,z \,|\,
x^2=xy=y^2=yz=zx=zy=z^2=0]$; namely, the relation $yx = xz$ no
longer holds, and the quiver separates to
\begin{equation}
\xymatrix@C=16pt@R=10pt{
{} & \circ \ar[r] & \circ \\
\circ \ar[ur]^{\alpha} \ar[dr] & & { } \\
{} & \circ \ar[r]_{\alpha} & \circ
}
\end{equation}
which  has a geometric
 decomposition into two paths.
\end{exmpl}

\subsection{Appendix: Pseudo-quivers}

Change of basis within a glued component can simplify the full quiver of the representation. We give few examples, and explain how they lead us to the notion of a pseudo-quiver.

\begin{exmpl}\label{pseudo1}
Consider the full quivers
\begin{equation}\label{G9}
\xymatrix@C=32pt@R=2pt{
{} & \comp[I]{1}{K} \ar[r]|(0.5){\alpha}
\ar[rdd]|(0.34){\beta}
& \comp[\II]{1}{K}
\ar[rd]|(0.5){} & {}
\\ \comp{1}{K} \ar[ru] \ar[rd] &
{} & {} & \comp{1}{K}
\\ {} & \comp[I]{1}{K} \ar[ruu]|(0.34){\alpha} \ar[r]|(0.5){\beta} & \comp[\II]{1}{K} \ar[ru] & {} }
\qquad
\begin{array}{c} {\ } \\ {\ } \\ \mbox{and} \end{array}
\qquad
\xymatrix@C=32pt@R=2pt{
{} & \comp[I]{1}{K} \ar[r]|(0.5){\alpha}
\ar[rdd]|(0.34){\beta}
& \comp[\II]{1}{K}
\ar[rd]|(0.5){} & {}
\\ \comp{1}{K} \ar[ru] \ar[rd] &
{} & {} & \comp{1}{K}
\\ {} & \comp[I]{1}{K} \ar[ruu]|(0.34){\alpha} \ar[r]|(0.5){3\beta} & \comp[\II]{1}{K} \ar[ru] & {} }
\end{equation}
In the left-hand side of \eq{G9}, a base change of the components denoted by $I$, which replaces $e_{22},e_{33}$ with $e_{22},e_{33}-e_{22}$ along the lines of \Eref{pseud}, results in the full quiver to the left of \eq{G9+}, which is subdirectly reducible, with no gluing in each component. However in the right-hand side of \eq{G9}, the same base change results in the quiver to the right of \eq{G9+}, which still has gluing.
\begin{equation}\label{G9+}
\xymatrix@C=32pt@R=2pt{
{} & \comp[I]{1}{K} \ar[r]|(0.5){\alpha} \ar[rdd]|(0.34){\beta}
& \comp[\II]{1}{K}
\ar[rd]|(0.5){} & {}
\\ \comp{1}{K} \ar[ru] \ar[rd] &
{} & {} & \comp{1}{K}
\\ {} & \comp[I]{1}{K}
& \comp[\II]{1}{K} \ar[ru] & {} }
\qquad
\begin{array}{c} {\ } \\ {\ } \\ \mbox{\ ; \ } \end{array}
\qquad
\xymatrix@C=32pt@R=2pt{
{} & \comp[I]{1}{K} \ar[r]|(0.5){\alpha} \ar[rdd]|(0.34){\beta}
& \comp[\II]{1}{K}
\ar[rd]|(0.5){} & {}
\\ \comp{1}{K} \ar[ru] \ar[rd] &
{} & {} & \comp{1}{K}
\\ {} & \comp[I]{1}{K}
\ar[r]|(0.5){2\beta} & \comp[\II]{1}{K} \ar[ru] & {} }
\end{equation}
\end{exmpl}

\begin{exmpl}
Now consider the full quiver
\begin{equation*}
\xymatrix@C=40pt@R=0pt{
{} &  \comp[I]{1}{K} \ar@{->}|(0.5){\alpha}[rd] \ar@{->}|(0.25){\alpha}[rddd] & {} & {} \\
{} &  {} & \comp[\II]{1}{K} \ar@{->}[rd] & {} \\
\comp{1}{K} \ar@{->}[r] \ar@{->}[ruu] \ar@{->}[rdd] &  \comp[I]{1}{K} \ar@{->}|(0.3){\beta}[ru] \ar@{->}|(0.3){2\beta}[rd] & {} & \comp{1}{K}, \\
{} &  {} & \comp[\II]{1}{K} \ar@{->}[ru] & {} \\
{} &  \comp[I]{1}{K} \ar@{->}|(0.25){\gamma}[ruuu] \ar@{->}|(0.5){3\gamma}[ru] & {} & {}
}
\end{equation*}
in which three pairs of arrows may be traded along the lines of \Eref{pseud}. If not for the gluing, we could perform a base change resulting in the full quiver
\begin{equation*}
\xymatrix@C=40pt@R=0pt{
{} &  \comp[I]{1}{K} \ar@{->}|(0.5){\alpha}[r]  & \comp[\II]{1}{K} \ar@{->}[rdd] & {} \\
{} &  {} & {} & {} \\
\comp{1}{K} \ar@{->}[r] \ar@{->}[ruu] \ar@{->}[rdd] &  \comp[I]{1}{K} \ar@{->}|(0.5){\beta}[r]  & \comp[\II]{1}{K} \ar@{->}[r] & \comp{1}{K}, \\
{} &  {} & {} & {} \\
{} &  \comp[I]{1}{K} \ar@{->}|(0.5){3\gamma}[r] & \comp[\II]{1}{K} \ar@{->}[ruu] {} & {}
}
\end{equation*}
However the identical gluing of the $I$ components carries over to a linear dependence among the vertices denoted by $\II$, because otherwise the algebra would degenerate to the zero algebra.
\end{exmpl}

Recall that in a representations in Wedderburn block form, the diagonal blocks are direct sums of matrix algebras (over extensions of $F$). Gluing replaces the natural central idempotents of the glued blocks by their sum, and specifies a `glued' subspace of the direct sum, such as $F(1,1,1) \sub F\oplus F \oplus F$.

Without diagonal gluing, a base change may simplify the arrows in the full quiver in a way that reduces the number of arrows. However such a base change may not preserve the diagonal gluing, resulting in a representation in which the diagonal blocks corresponding to vertices may have extra linear relations. Such a representation, which is no longer in Wedderburn block form, can be encoded in a full quiver with quais-linear gluing among the vertices, which we call a \textbf{pseudo-quiver}.

Let us see how we can improve the form of the pseudo-quiver by means of
starting with the `correct' representation.

\begin{prop}\label{basicquiv2}
Suppose all off-diagonal gluing of the  pseudo-quiver of a \Zcd\
algebra is proportional. Then the algebra has a representation whose
pseudo-quiver (with respect to a suitable basis) has no linear
dependence relations among arrows all starting or ending   at the
same vertex.
\end{prop}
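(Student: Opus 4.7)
The plan is to construct the desired representation by a finite sequence of base changes inside the diagonal blocks of the given pseudo-quiver. Suppose, at a vertex $v$, the outgoing arrows $\alpha_1,\dots,\alpha_k$ to vertices $w_1,\dots,w_k$ satisfy a non-trivial linear dependence $\sum c_i\alpha_i = 0$ in the algebra. Each $\alpha_i$ corresponds to a rank-one block above the diagonal, and the dependence encodes a row-vector $u\in e_vK^n$ witnessing the cancellation. By replacing the basis of $B_v$ with one containing $u$ as a distinguished element, the arrow corresponding to $u$ becomes redundant in that basis and may be removed from the pseudo-quiver. A dual base change at a vertex $w$ (column operation) handles linear dependences among incoming arrows at $w$.

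Each such base change is a similarity transformation of the $v$-block by an invertible matrix $P_v$; this conjugates $B_v$, multiplies the outgoing radical blocks $B_{v, s}$ on the left by $P_v^{-1}$, and multiplies the incoming radical blocks $B_{r, v}$ on the right by $P_v$. Because the assumed off-diagonal gluing is \emph{purely proportional}, gluing between two blocks is encoded by a scalar multiple $\nu$, which is unaffected by left or right multiplication; synchronizing the base change across vertices glued to $v$ (applying the transported version $\nu P_v$ where appropriate) therefore preserves the pseudo-quiver structure. Consequently, after the base change we still have a representation in pseudo-quiver form with proportional off-diagonal gluing, but with strictly fewer linear dependences among arrows sharing a source or target.

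The main obstacle is to coordinate base changes across the vertices so that the iteration terminates and does not reintroduce dependences already eliminated. A base change at $v$ affects both outgoing arrows at $v$ (via left multiplication) and incoming arrows at $v$ (via right multiplication), so naively simplifying one direction might disturb the other. The resolution uses the fact that the representation is upper-triangular, so the underlying graph of the pseudo-quiver is acyclic. We proceed in two passes: first, in a sweep from sources to sinks, we simplify outgoing dependences at each vertex --- a subsequent base change at a later vertex $w$ modifies incoming arrows at $w$ by right multiplication, which does not re-create relations among arrows sharing a source at an earlier vertex. In a second sweep, from sinks to sources, we handle incoming dependences in the analogous way. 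Because each step strictly decreases the total dimension of the finite-dimensional space of linear relations among arrows sharing a source or target, the procedure halts after finitely many iterations, yielding a representation with the stated property.
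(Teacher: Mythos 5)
Your proposal is correct and follows essentially the same approach as the paper: both consolidate proportionally glued arrows sharing a source (or target) by a linear change of basis inside the diagonal blocks, and both rely on the acyclic, upper-triangular structure to organize the induction so it terminates (the paper via the ``level'' $\min\{k : aJ^k = 0\}$, your version via source-to-sink and sink-to-source sweeps). One small inaccuracy: each arrow corresponds to a linear functional between (possibly higher-dimensional) matrix components, not in general to a rank-one block, but this does not affect the argument.
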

\begin{proof} For any $a \in A$, we define its \textbf{level} to
be the smallest $k$ for which $aJ^k = 0.$ Suppose $m$ is the
nilpotence index of $J$; i.e., $J^m = 0$ but $J^{m-1} \ne 0.$ We
take a base of the functionals corresponding to arrows, starting
with the top level $m-1$, and continuing inductively (which is
possible, since the graph of the pseudo-quiver has no cycles).

View the arrows from the   vertex $v$ as linear functionals. By
definition, different arrows originating from the same vertex
correspond to independent linear functionals. In this way, we see
that any purely proportionally glued arrows originating from the
vertex $v$ correspond to the same functional, and thus can be
combined to a single arrow after a linear change of basis. The
dual argument then applies to two arrows ending both at the same
vertex.
In this manner,
we can eliminate all linear dependence relations among the arrows.
\end{proof}

\begin{defn} \textbf{Making an open sandwich} is the process of starting with the vertex $v$ and applying
arrows corresponding to base elements of $J$, viewed as linear
functionals as in the proof.\end{defn}

\begin{cor}\label{Canon1}
When the base field $F$ is infinite, every algebra $A$ has a
representation in which the linear functionals corresponding to
the arrows of their pseudo-quivers are linearly independent.
\end{cor}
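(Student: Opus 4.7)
The plan is to deduce the corollary by combining \Tref{propglu} with \Pref{basicquiv2}. Since $F$ is infinite, \Tref{propglu} applied to the Zariski closure $\cl{A}$ provides a representation whose full quiver has only proportional gluing, and since $A$ is PI-equivalent to $\cl{A}$ we may work in this representation throughout. With all off-diagonal gluing now proportional, \Pref{basicquiv2} applies verbatim to yield a further change of basis after which no linear dependence holds among arrows sharing a common source, nor among arrows sharing a common target.

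To promote this \emph{local} independence to the \emph{global} linear independence demanded by the statement, I would argue as follows. Each arrow corresponds to a linear functional supported in a single Peirce bimodule $e_r A e_s$, and arrows whose ordered endpoint pairs $(r,s)$ are distinct live in disjoint Peirce blocks of $B = KA$ (cf.~\Rref{Peirce0} and \Rref{decomp}). Hence any putative nontrivial linear relation among the arrow-functionals must, block by block, collapse to a relation among arrows with the same $(r,s)$; proportional gluing merely identifies such an arrow with a scalar multiple of an arrow in a parallel block $(r',s')$, so choosing one representative per proportional gluing class reduces the question to independence within one block, which is exactly the content of \Pref{basicquiv2} (arrows with common source \emph{and} common target are in particular arrows with common source).

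The main obstacle is to ensure that the successive base changes built in \Pref{basicquiv2}, performed inductively in decreasing order of the level $k$ (the smallest $k$ with $aJ^k=0$), remain \emph{consistent with the proportional gluing} produced by \Tref{propglu}. The point to check is that when we modify the basis at a vertex $v$ in order to make the outgoing (or incoming) arrows independent, the induced modification at every vertex $v'$ glued to $v$ is exactly the one dictated by the proportionality constants; otherwise the open-sandwich procedure of \Pref{basicquiv2} could reintroduce dependences elsewhere. This compatibility holds because proportional gluing is preserved under scalar rescaling of matrix units: a linear relation $\sum c_i \alpha_i = 0$ at $v$ transports to $\sum c_i (\nu_i \alpha_i)=0$ at a proportionally glued vertex $v'$, so eliminating the first automatically eliminates the second. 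With this compatibility verified, iterating the procedure down the levels simultaneously across all glued components produces a representation in which the arrow-functionals form a linearly independent set, as required.
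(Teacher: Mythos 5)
Your proposal is correct and takes essentially the same route as the paper: invoke \Tref{propglu} to pass to a representation with only proportional gluing, then apply \Pref{basicquiv2} to absorb proportionally glued arrows and eliminate dependences among arrows sharing a source or target. The extra paragraphs you add — the observation that arrows with distinct endpoint pairs occupy disjoint Peirce blocks, and the check that the base change transports consistently across proportionally glued vertices — are sound elaborations of what the paper leaves implicit; the only thing the paper mentions that you omit is the preliminary appeal (via~\cite[Proposition~4.7]{BRV1}) that the initial arrow-relations are linear because $F$ is infinite, but this is already built into the hypotheses of \Tref{propglu}.
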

\begin{proof} We may start with a representation in whose pseudo-quiver
the relations among the arrows are given by linear dependences, in view of
\cite[Proposition~4.7.(2)]{BRV1} (noting that quasi-linear relations
are linear since $F$ is infinite). By Theorem~\ref{propglu}, we
have a representation whose pseudo-quiver can only have purely
proportional gluing, which we eliminate using
Proposition~\ref{basicquiv2}.
\end{proof}

In view of Proposition~\ref{basicquiv2}, when studying relatively
free algebras, we restrict our attention to pseudo-quivers that
have only proportional Frobenius gluing among the arrows, and the
only possible gluing among vertices comes from quasi-linear
relations. In view of Corollary~\ref{Canon1}, when $F$ is
infinite, we can always find a representation whose pseudo-quiver
never has gluing from different arrows
 emerging from the same vertex  (or, dually, leading to the
same vertex).

Although \Cref{Canon1} may seem a technical curiosity, sacrificing
the independence of the blocks for the (perhaps less desirable)
linear independence of the arrows, it turns out to be a key tool
in  combinatorial PI-theory. Here is an application to the general
structure theory.

\begin{thm}\label{nilpcan} Any \Zcd\ algebra $A$ has a representation for whose full quiver the maximal length of
the branches equals the index of nilpotence of the radical minus
~1.\end{thm}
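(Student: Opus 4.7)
I would approach this by exhibiting a faithful representation built from the left regular module of $A$, with a basis adapted to the $J$-adic filtration. Before constructing it, note that the lower bound ``$\geq m-1$'', where $m$ is the nilpotence index, comes essentially for free and has been noted already in the paper: since $J^{m-1}\ne 0$ while $J^m=0$, some product $x_1 x_2 \cdots x_{m-1}$ of radical elements is nonzero, and expanding it via the Peirce decomposition of any given faithful representation yields a nonzero term $e_{r_0}x_1e_{r_1}x_2\cdots x_{m-1}e_{r_{m-1}}$, hence a chain of $m-1$ arrows in the corresponding full quiver. So the real work is to produce a representation with no branch longer than $m-1$.

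To do this, I set $V=A$ and invoke complete reducibility of the semisimple algebra $S = A/J$ furnished by the Wedderburn decomposition of \Tref{zarcl1} to choose, for each $i=0,\dots,m-1$, an $S$-invariant complement $W_i$ of $J^{i+1}$ inside $J^i$; then $J^i=W_i\oplus W_{i+1}\oplus\cdots\oplus W_{m-1}$ as $S$-modules, with $W_i\cong J^i/J^{i+1}$. I pick a basis of $V$ compatible with this decomposition, ordered so that the higher-grading pieces $W_j$ (large $j$) come first. In this basis the matrix of left multiplication $L_a$ is block upper triangular with respect to the grading: for $a\in S$ it is block-diagonal (because each $W_i$ is $S$-invariant), and for $a\in J^k$ it sends $W_i$ strictly into $W_{i+k}\oplus\cdots\oplus W_{m-1}$ for every $i$. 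Further decomposing each $W_i$ into isotypic components under $S$ refines the diagonal blocks into matrix algebras over the simple components of $S$, and the resulting faithful representation $L\colon A \hookrightarrow \operatorname{End}_F(V)$ is in the Wedderburn block form of \Dref{Wbf}, with vertices naturally indexed by pairs (simple component of $S$, grading level $i \in \{0,\dots,m-1\}$).

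Reading off the branch lengths is then immediate: every arrow in the full quiver comes from a nonzero Peirce component of $L(J^k)$ for some $k\ge 1$, and so connects two vertices whose grading levels differ by exactly~$k$. Any directed path therefore moves through vertices with strictly monotone grading, and since those levels lie in $\{0,\dots,m-1\}$ no path can contain more than $m-1$ arrows. Combined with the lower bound above, this yields maximal branch length $=m-1$, as claimed.

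The main technical obstacle I expect is the verification in the second step that the chosen basis genuinely puts $L$ into Wedderburn block form. The crucial ingredient is the $S$-equivariant lifting of each $J^i/J^{i+1}$ to an $S$-submodule $W_i\subseteq J^i$, which is available because every $J^i$ is a semisimple $S$-module; this is what prevents spurious off-diagonal contributions of the $S$-action from mixing different grading levels. Once this lifting is in place, the strictly upper-triangular nature of the radical action---and with it the bound on branch lengths---follows directly from $J^m=0$, and faithfulness is automatic since $L_a(1)=a$ (if $A$ lacks a unit one first adjoins one, noting that $\operatorname{Rad}(A^1)=J$ has the same nilpotence index).
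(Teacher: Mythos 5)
Your approach---filtering the left regular representation by powers of $J$ and splitting each layer $S$-equivariantly---is genuinely different from the paper's. The paper works via \emph{pseudo-quivers}: it invokes Proposition~\ref{basicquiv2} (whose proof also uses a level filtration, but on arrows viewed as linear functionals) and the ``open sandwich'' construction to select one branch and annihilate cancelling contributions, then translates back to the full quiver. Your construction is more concrete and, in the infinite-field case ($K=F$, $A$ finite-dimensional), works essentially as written: the $S$-invariant complements $W_i$ exist because $S$ is semisimple; the decomposition refines into Wedderburn block form (though one should split each $W_i$ into \emph{simple} rather than merely \emph{isotypic} $S$-summands to match \Dref{Wbf}, which requires $n_u\times n_u$ blocks isomorphic to $A_u$, with multiplicity realized as several glued copies); and since left multiplication by $J$ strictly raises the filtration level, every arrow strictly increases grading, so no path exceeds length $m-1$. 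Combined with the trivial lower bound this gives equality.

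There is, however, a genuine gap when $F$ is finite: a \Zcd\ algebra $A$ need not then be finite-dimensional over $F$ (any diagonal block with $F_r=K$ already makes it infinite), so $V=A$ produces a representation in $\operatorname{End}_F(V)$ with $V$ infinite-dimensional, which is not a representation into $\M[n](K)$ and yields no full quiver in the sense of \Dref{quivers}. The fix is to take $V=B=KA$ instead: $B$ is finite-dimensional over $K$, the filtration by $J_B^i=KJ^i$ has the same nilpotence index $m$, one splits each layer over $S_B=KS$ (which is semisimple) rather than over $S$, and faithfulness still comes from $L_a(1)=a$. With this substitution your argument closes, but as stated the proof does not cover the finite base field case that the theorem claims.
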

\begin{proof}
Let $J$ be the radical, and take  $m$ such that $J^m = 0$ but
$J^{m-1} \ne 0.$  Recalling from the proof of
Proposition~\ref{basicquiv2} that the arrows of the pseudo-quiver
correspond to a base of linear functionals, we can make an open sandwich
by choosing one branch and zeroing out the other branches; in this
way we eliminate cancellation among different branches.

In order to pass from pseudo-quivers to quivers, we consider the
representation giving rise to the pseudo-quiver, and take its
corresponding full quiver. Note that the length of the maximal
path of the pseudo-quiver is the same as the length of the maximal
path of our full quiver.
\end{proof}

This proof works for pseudo-quivers, but not necessarily for
quivers, since we need the linear independence of the arrows.

\begin{exmpl} The full quiver of the Grassmann algebra on two generators
(cf.~\Eref{Grass2}) is also its pseudo-quiver. The radical
$J$ is $Fe_1 + Fe_2$. One open sandwich which we get is $Fe_1e_2\ne 0.$

On the other hand in the algebra of \eq{EE21}, the only open sandwich is composed of $e_1$ and $e_2+e_3$, as $\beta$ maps $e_2+e_3$ to zero, and indeed in this algebra $J^2 = 0$.
\end{exmpl}

As we shall see in \cite{BRV3}, when testing whether
a polynomial $f(x_1, \dots, x_d)$ is a polynomial identity of an
algebra $A$, one observes that the monomials of $f$ can be
determined to be zero or nonzero on~$A$ according to their
evaluations on paths of a suitable pseudo-quiver, whereas the
analogous assertion for full quivers fails. Also, pseudo-quivers
naturally give rise to PIs, as is to be seen in \cite{BRV3}.

\end{document}